\documentclass[10pt, letterpaper]{article}
\usepackage{amssymb,amsmath, amsthm}
\usepackage{graphics, epsfig}
\usepackage[toc,page]{appendix}

\setlength{\textwidth}{6in}
\setlength{\textheight}{8.5in}
\setlength{\topmargin}{-0.3in}
\setlength{\oddsidemargin}{0.2in}
\setlength{\parskip}{2pt}

\newtheorem{assumption}{Assumption}[section]
\newtheorem{lemma}{Lemma}[section]
\newtheorem{proposition}{Proposition}[section]
\newtheorem{cor}{Corollary}[section]
\newtheorem{theorem}{Theorem}[section]
\newtheorem{remark}{Remark}[section]

\newtheorem{definition}{Definition}[section]

\numberwithin{equation}{section}

\def\mbf{\mathbf}
\def\asto{\overset{a.s.}{\to}}

\def\E{\text{\rm E}}
 
\def\rn{\Re^n}

\def\g{\gamma}

\def\F{\mathcal{F}}

\def\P{\mathcal{P}}

\def\argmin{\mathop{\arg \min}}
\def\argmax{\mathop{\arg \max}}
\def\supp{\mathop {\text{\rm supp}}}
\def\G{\tilde{g}}

\def\U{\mathcal{U}}
\def\Ste{S_o}
\def\Stce{R_o}
\def\St{S}
\def\Stc{R}

\def\sspa{SSP($\bplb$)} 
\def\A{{\bar {\mathcal{N}}}}
\def\B{{\bar {\mathcal{N}}}}

\def\da{\rho}
\def\db{\sigma}
\def\pla{\mu}
\def\plb{\nu}
\def\bpla{\bar{\mu}}
\def\bplb{\bar{\nu}}
\def\ave{\underline}

\def\Ra{\Pi_{1, \text{\rm \tiny SR}}} 
\def\Rb{\Pi_{2, \text{\rm \tiny SR}}}
 
\def\SR{\Pi_{\text{\rm \tiny SR}}} 
\def\SD{\Pi_{\text{\rm \tiny SD}}} 

\def\moveup{\vspace*{-0.1cm}}

\def\nbpla{\bar \mu}
\def\nbplb{\bar \nu}
\def\npla{\mu}
\def\nplb{\nu}
\def\nDa{D_1} 
\def\nDb{D_2}

\begin{document} 
\title{\bigskip
Stochastic Shortest Path Games and Q-Learning\thanks{This work was supported by the Air Force Grant FA9550-10-1-0412.}}
 \author{Huizhen Yu%
 \thanks{Huizhen Yu was with the Laboratory for Information and Decision Systems (LIDS), MIT, and she is now with the Department of Computing Science, University of Alberta.
        {\tt\small janey.hzyu@gmail.edu}}}
\date{}
\maketitle

\begin{abstract}
We consider a class of  two-player zero-sum stochastic games with finite state and compact control spaces, which we call stochastic shortest path (SSP) games. They are undiscounted total cost stochastic dynamic games that have a cost-free termination state.
Exploiting the close connection of these games to single-player SSP problems,
we introduce novel model conditions under which we show that the SSP games have strong optimality properties, including the existence of a unique solution to the dynamic programming equation, the existence of optimal stationary policies, and the convergence of value and policy iteration.
We then focus on finite state and control SSP games and the classical Q-learning algorithm for computing the value function. Q-learning is a model-free, asynchronous stochastic iterative algorithm. By the theory of stochastic approximation involving monotone nonexpansive mappings,  it is known to converge when its associated dynamic programming equation has a unique solution and its iterates are bounded with probability one. For the SSP case, as the main result of this paper, we prove the boundedness of the Q-learning iterates under our proposed model conditions, thereby establishing completely the convergence of Q-learning for a broad class of total cost finite-space stochastic games. 
\end{abstract}

\thispagestyle{myheadings}
\markboth{}{ \large  LIDS REPORT 2875}
\setlength{\unitlength}{1cm}
\begin{picture}(0,0)(0,0)
\put(-0.52,12.0){\fontsize{12}{14} \selectfont \it Dec 2011}
\put(-0.46,11.5){\fontsize{12}{14}\selectfont \it Revised Apr 2014}
\end{picture}

\clearpage
\tableofcontents
\clearpage

\section{Introduction}

In this paper we consider two-player zero-sum stochastic dynamic games under the undiscounted, total cost criterion, and we focus on those games that have a finite state space and a cost-free termination state. Our interest is in using a well-known model-free stochastic approximation algorithm, the Q-learning algorithm, for computing the value of a game when the control spaces of both players are finite. The main purpose of this paper is to show that there is a broad class of total cost games with desirable optimality properties for which the Q-learning algorithm converges in a totally asynchronous setting under fairly mild conditions.
  
Zero-sum stochastic games were first introduced by Shapley \cite{sha53} for the discounted cost criteria. 
Since then there have been extensive research on undiscounted stochastic games, 
including games with the limiting average cost criterion, first considered by Gillette~\cite{game_Gi57} and developed in the seminal works~\cite{game_BlF68,game_BeK76,game_MeN81}, and games with total cost and related criteria \cite{game_ThV87, game_Fe80, game_No85,game_No99}. 
(We refer readers to the excellent book by Filar and Vrieze~\cite{game_fv97} for historical and contemporary developments on stochastic games.)
A general formulation of total cost games, when one-stage costs can be positive or negative-valued,
was first proposed and analyzed by Thuijsman and Vrieze~\cite{game_ThV87}. 
In this and their subsequent works (see the survey by Thuijsman and Vrieze~\cite{game_ThV98} and also Filar and Vrieze~\cite[Chapter~4]{game_fv97}), they established important existence results for finite state and control total cost games. They showed that for a total cost game to have a finite value function, a sufficient condition is that the corresponding average-cost game has the value zero and both players posses stationary average-cost optimal policies.  They also showed that for a total cost game to have not only a finite value function but also stationary optimal policies for both players, a necessary and sufficient condition is that a certain system of functional equations have solutions.

In this paper we will focus on a subset of the total cost games of the latter kind. In addition to having a value and stationary optimal policies,
the SSP games we consider also have the property that their associated Bellman equation has a unique solution. This property relates to the convergence of value iteration and is essential for the Q-learning algorithm we are interested in. Among the total cost games satisfying Thuijsman and Vrieze's necessary and sufficient conditions mentioned earlier, the ones that will be excluded from our consideration are, briefly speaking, those games in which from some initial state, both players can play some stationary optimal policies (and incur zero average cost) without ever reaching the termination state. (We will discuss in Section~\ref{sec-ssp-remarks} some examples of such games.)

To delineate a subset of SSP games with desirable properties, we will specify conditions on the model of the games, and we will do so in the broader context of games with compact control sets and semi-continuous one-stage costs, which include finite-control games as special cases.  (A finite-control game can be viewed as a game with compact control sets, where controls correspond to randomized decision rules of each player.) In the context of total cost compact-control games, there are several earlier works \cite{KuC69, KuS81, sspgame_pt99}, and the one by Patek and Bertsekas \cite{sspgame_pt99} is most related to ours. They considered finite-state compact-control SSP games in which one-stage costs can take both positive and negative values, and the termination state need not be reachable for every initial state and every pair of policies of the two players. 
The term ``SSP games'' is, in fact, from~\cite{sspgame_pt99}, and it is based on the close connection of SSP games, at both analytical and computational level,  to single-player SSP problems, which are total cost or total reward Markov decision processes (MDP) with a termination state. (For references on SSP and total cost MDP, see e.g., Bertsekas and Tsitsiklis \cite{bet-ssp91, BET}, Feinberg~\cite{feinberg92}, and Puterman~\cite{puterman94}.)  
Patek and Bertsekas \cite{sspgame_pt99} established optimality results similar to those we aim to obtain, but under model conditions that are asymmetric in terms of the two players and bear a strong association with pursuit-evasion type of games.

As one of the contributions of this paper, we introduce a symmetric formulation of model conditions (Assumption~\ref{assum-app-sspgame}). It characterizes a much broader class of SSP games than considered in \cite{sspgame_pt99} (see Section~\ref{sec-ssp-remarks} for a detailed comparison), and it allows  
the theory of single-player SSP problems (Bertsekas and Tsitsiklis \cite{bet-ssp91}) to be more fully utilized in analyzing the compact-control SSP games. As a result, we show that the desired optimality properties, including the existence of a unique solution to the Bellman equation and the existence of  a pair of equilibrium policies that are stationary deterministic (Theorem~\ref{thm-opt}), as well as the convergence properties of value iteration and policy iteration (Theorem~\ref{thm-opt-vipi}), are retained.

We then consider finite state and control SSP games that satisfy the proposed model conditions, 
and we turn to the question of the convergence of the Q-learning algorithm for computing their value functions.
Q-learning was first introduced by Watkins~\cite{wat89} in the context of MDP and reinforcement learning, 
and its convergence was analyzed most comprehensively by Tsitsiklis~\cite{tsi94} as a special case of the convergence of asynchronous stochastic approximation algorithms.
For discounted stochastic games, Littman~\cite{littman96} studied Q-learning and analyzed its convergence (with a different argument than~\cite[Theorem 3]{tsi94}, which also implies the convergence of Q-learning in such games). 
For undiscounted SSP games whose Bellman equations admit a unique solution, convergence of Q-learning is known in two limited cases under strong assumptions: 
\begin{itemize}
\item[(i)] when the game always terminates regardless how the two players play, 
and 
\item[(ii)] when the iterates generated by Q-learning are bounded with probability one. 
\end{itemize}
In both cases, the convergence of Q-learning follows from the convergence theorems of Tsitsiklis~\cite{tsi94} for asynchronous stochastic approximation involving sup-norm contraction or monotone nonexpansive mappings: convergence in the first case is due to a contraction property (Patek and Bertsekas \cite[Lemma 4.1]{sspgame_pt99}), and convergence in the second case (under the boundedness condition) follows from arguments for monotone nonexpansive mappings \cite[Theorem 2]{tsi94}. (For more details, see Bertsekas and Tsitsiklis~\cite[Chapter 4 and Section 7.2]{BET}.) 
Another convergence result is also known when boundedness of Q-learning iterates is not assumed,  
based on the results of Abounadi, Bertsekas and Borkar~\cite{abb02}. However, in this case, additional conditions are required on the timing and frequency of component updates in Q-learning, which are more restrictive than the totally asynchronous computing framework of \cite{tsi94}. 

The main contribution of this paper is a boundedness proof for the Q-learning algorithm with totally asynchronous computation, for the broad class of SSP games satisfying our model conditions. We show that the Q-learning iterates are bounded with probability one  (Theorem~\ref{thm-boundql}), thereby furnishing the boundedness condition required in the convergence theorem of \cite{tsi94} and establishing completely the convergence of Q-learning (Theorem \ref{thm-convql}).
Our proof techniques are based on those constructed in Yu and Bertsekas \cite{yub_bd11} for analyzing boundedness of Q-learning in single-player SSP problems.

This paper is organized as follows. In Section~\ref{sec-game} we consider finite-state compact-control SSP games, and introduce our new model conditions and prove optimality results.
In Section~\ref{sec-finitessp} we describe finite state and control SSP games and the Q-learning algorithm. Finally, in Section~\ref{sec-bd} we present the boundedness analysis for Q-learning.

\section{A Finite-State Compact-Control SSP Game Model} \label{sec-game}

\subsection{Basic Definitions and Conditions} \label{sec-2.1}
We consider a finite-state two-player zero-sum total cost stochastic game with a termination state.  
Let $\Ste = S \cup \{0\}$ be the state space, where $S=\{1, \ldots, n\}$ and state $0$ is a cost-free termination (absorbing) state.
Two players participate in the game with opposite objectives, and their actions jointly influence the evolution of the states through time. In particular, at each state $i \in \St$, player I (player II, respectively) can apply a control from a set $\bar U(i)$ ($\bar V(i)$, respectively) of feasible controls,
where $\bar U(i)$ and $\bar V(i)$ are assumed to be compact sets in some complete separable metric space. 
If the two players apply a pair of controls $(\bar u, \bar v) \in \bar U(i) \times \bar V(i)$, an expected one-stage cost $c_i(\bar u, \bar v)$ is incurred to player I while player II receives the same amount as an expected one-stage reward, and
the system then transitions from state $i$ to state $j \in \Ste$ with probability $p_{ij}(\bar u, \bar v)$.
Here the one-stage costs (with respect to player I) can be positive or negative.
We assume that the transition probabilities and one-stage costs satisfy the following continuity/semi-continuity conditions:

\begin{assumption}[Continuity Condition] \label{assump-continuity}
For all states $i, j \in \St$, the transition probability $p_{ij}(\bar u, \bar v)$ is a continuous function on $\bar U(i) \times \bar V(i)$, and the one-stage cost $c_i(\bar u, \bar v)$ is lower semicontinuous in $\bar u$ for fixed $\bar v$ and upper semicontinuous in $\bar v$ for fixed $\bar u$.  
\end{assumption}

Starting from some state $i_0 \in \St$ at time $0$, the players play for an infinite number of stages, making control decisions based on the information of the current state and the history of the game, which includes the past states and past controls applied by each player, while the states evolve in a Markovian way as described above. We define the total costs for player I and the total rewards for player II as follows. 

Let $i_k$ denote the state and $(\bar u_k, \bar v_k)$ the controls taken by the two players at time $k$. Let $\Pi_1, \Pi_2$ denote the sets of all history-dependent randomized policies for player I and player II, respectively (each of such policies is a collection of Borel measurable transition probabilities from the space of histories to the respective player's control space).
If player I adopts policy $\pi_1 \in \Pi_1$ and player II $\pi_2 \in \Pi_2$, we define the total cost of player I  (total reward of player II) for the initial state $i_0=i$ by
$$ x_i(\pi_1, \pi_2) = \liminf_{t \to \infty}  \, \E_{\pi_1 \pi_2} \Big[ \sum_{k = 0}^t c_{i_k}(\bar u_k, \bar v_k) \, \big| \,  i_0 = i \, \Big],$$
where $\{ (i_k, \bar u_k, \bar v_k), k \geq 0\}$ is the random process of states and controls induced by the policy pair $(\pi_1, \pi_2)$, and $\E_{\pi_1 \pi_2}$ denotes expectation with respect to the probability distribution of the induced process. In vector notation we write $x(\pi_1, \pi_2)$ for the vector of total costs, $(x_1(\pi_1, \pi_2), \ldots, x_n(\pi_1, \pi_2))$.

The optimal total cost for player I and optimal total reward for player II, for each initial state $i \in S$, are defined  to be 
$$ \bar x^*_i = \inf_{\pi_1 \in \Pi_1} \sup_{\pi_2 \in \Pi_2} x_i(\pi_1, \pi_2), \qquad \quad  \underline{x}^*_i = \sup_{\pi_2 \in \Pi_2} \inf_{\pi_1 \in \Pi_1}  x_i(\pi_1, \pi_2),$$
respectively. An optimal policy for player I (player II) is then a policy which attains the optima for all states in the above minimization over $\Pi_1$ (maximization over $\Pi_2$). We call $\bar x^*_i, \underline{x}^*_i$ the \emph{upper and lower value} of the game for state $i$. If these values coincide for all states, we call the corresponding $x^*=(x^*_1, \ldots, x^*_n)$ where $x^*_i=\bar x^*_i = \underline{x}^*_i$, the \emph{value function} of the game.
We say that $(\pi_1^*, \pi_2^*) \in \Pi_1 \times \Pi_2$ is a  pair of \emph{equilibrium policies} if the following holds:
$$   x(\pi_1^*, \pi_2) \leq x(\pi_1^*, \pi_2^*) \leq x(\pi_1, \pi_2^*), \qquad  \ \forall \, \pi_1 \in \Pi_1,  \ \pi_2 \in \Pi_2.$$
In that case $x^* = x(\pi_1^*, \pi_2^*)$ is the value function of the game, and $\pi_1^*, \pi_2^*$ are optimal policies for the two players.

Consider the class of stationary deterministic policies of each player, which is defined for player I and player II by  
\begin{align*}
  \nDa & = \Big\{ \npla : \St \mapsto \cup_{i \in \St} \bar U(i)  \ \Big| \  \npla(i)  \in \bar U(i),  \ i \in \St \Big\}, \\
   \nDb & = \Big\{ \nplb : \St \mapsto \cup_{i \in \St} \bar V(i) \ \Big| \  \nplb(i)  \in \bar V(i),  \ i \in \St  \Big\},
\end{align*}
respectively. Each function $\mu \in \nDa$ corresponds to a policy that applies at time $k$ the control $\mu(i_k)$ for state $i_k$, and this policy will also be denoted by $\mu$. We use similar notation for the policies corresponding to $\nDb$.
We will shortly introduce model conditions that guarantee the existence of equilibrium policies within these policies.\footnote{Without loss of generality, we focus on stationary deterministic policies here instead of stationary randomized policies, because our results can be applied in compact-control problems after a reformulation that let $\bar U(i)$ and $\bar V(i)$ represent probability distributions over the actual control sets.}

With stationary policies in $\nDa,\nDb$, we define several dynamic programming operators on $\rn$ for the game, using compact matrix and vector notation. 
For a pair of policies $(\npla, \nplb) \in \nDa \times \nDb$,  let $T_{\npla\nplb}: \rn \to \rn$ be given by
\begin{equation}
   T_{\npla\nplb} \, x  = c(\npla, \nplb) + P(\npla, \nplb)  x, \qquad x \in \rn,
\end{equation}
where $c(\npla, \nplb)$ is the $n$-dimensional one-stage cost vector with components $c_i\big(\npla(i), \nplb(i) \big)$, 
and $P(\npla, \nplb)$ is the $n$-by-$n$ substochastic transition probability matrix with elements $[P(\npla, \nplb)]_{ij} = p_{ij}\big(\npla(i), \nplb(i) \big)$, $i, j \in \St$. 
Define $T_{\npla}: \rn \to \rn$ and $T_{\nplb}: \rn \to \rn$ by
\begin{align}  \label{eq-dpoperator1}
    T_\npla x & = \sup_{\nplb \in \nDb} \big\{ c(\npla, \nplb) + P(\npla, \nplb)  x \big\},   & \tilde T_{\nplb} x & =  \inf_{\npla \in \nDa}  \big\{ c(\npla, \nplb) + P(\npla, \nplb)  x \big\}.
\end{align}
In the right-hand sides above the optimization over $\nDa$ or $\nDb$ is component-wise.
\footnote{Here we use the matrix/vector notation to write $n$ optimization problems in one expression. This is valid because of the separable structure of these problems. For example, the problem of maximizing the $i$th component of $c(\npla, \nplb) + P(\npla, \nplb)  x$ over $\nDb$ is identical to $\sup_{\nplb(i) \in \bar V(i)}  \{ c_i\big(\npla(i), \nplb(i) \big) +  \sum_{j\in \St} p_{ij}\big(\npla(i), \nplb(i) \big) x_j \}$. 
In other words, the $i$th optimization problem depends only on the components of $\npla, \nplb$ for state $i$.}

Finally, we define $T: \rn \to \rn$ and $\tilde T: \rn \to \rn$ by
\begin{align} \label{eq-dpoperator2}
    T x & = \inf_{\npla \in \nDa} \sup_{\nplb \in \nDb} \big\{ c(\npla, \nplb) + P(\npla, \nplb) x \big\},   &  \tilde T x & = \sup_{\nplb \in \nDb} \inf_{\npla \in \nDa}  \big\{ c(\npla, \nplb) + P(\npla, \nplb) x \big\},
\end{align}
where, similar to the above, the optimization in the right-hand sides is component-wise.

A mapping $H$ is monotone if $H x \leq H y$ for $x \leq y$. Since $P(\npla, \nplb)$ is a nonnegative matrix, the above mappings are monotone by definition. They also satisfy, be definition,
\begin{align}
 \tilde T_\nplb x \leq T_{\npla\nplb} x \leq T_\npla x,  \qquad  \quad \forall \, \npla \in \nDa, \ \nplb \in \nDb, \ x \in \rn, \label{app-ineq-1}\\
  \tilde T_\nplb x \leq   \tilde T x \leq T x  \leq T_\npla x, \qquad   \quad \forall \, \npla \in \nDa, \ \nplb \in \nDb, \ x \in \rn. \label{app-ineq-2}
\end{align}
Furthermore, Assumption~\ref{assump-continuity} on the continuity of the state transition probabilities and the semicontinuity of the one-stage costs implies that every component of $T_\npla x$ is lower semicontinuous in $(x, \npla)$, every component of $\tilde T_\nplb x$ is upper semicontinuous in $(x, \nplb)$, and every component of $T_{\npla\nplb} x$ is lower semicontinuous in $(x, \npla)$ for fixed $\nplb$ and upper semicontinuous in $(x, \nplb)$ for fixed $\npla$. Since the control sets are compact, it then follows that under Assumption~\ref{assump-continuity}, the infimum and supremum in the definitions of the above mappings are all attained: for every $x$,
there exists $\npla$ such that $T x = T_\npla x$; for every $x$ and $\npla$, there exists $\nplb$ such that $T_{\npla} x = T_{\npla \nplb} x$; and similar relations hold for $\tilde T$ and $\tilde T_{\nplb}$.

We also need a regularity condition:
\begin{assumption}[Minimax Regularity Condition] \label{assump-regularity}
For all $x \in \rn$, we have $T x = \tilde T x$, i.e.,
$$ \inf_{\npla \in \nDa} \sup_{\nplb \in \nDb} T_{\npla\nplb} \, x  = \sup_{\nplb \in \nDb} \inf_{\npla \in \nDa} T_{\npla\nplb} \, x.$$
\end{assumption}
\noindent Assumption~\ref{assump-regularity} is known to hold for cases where the control sets $\bar U(i)$ and $\bar V(i)$ in the above mathematical model correspond to the sets of probability distributions over the actual control sets which are compact, under certain continuity/semi-continuity conditions that can be weaker than Assumption~\ref{assump-continuity}. (See, for instance, \cite[Theorem 5.1]{game_No85}; see also the minimax theorems of Fan~\cite{fan53} for various conditions under which the above assumption holds.) 
In particular,
Assumption~\ref{assump-regularity}, as well as Assumption~\ref{assump-continuity}, 
is satisfied by the finite-space total cost zero-sum games that we will consider later.   
Under this assumption, we refer to $T$ or $\tilde T$ as the \emph{dynamic programming operator} and the equation $x = Tx$ or $x = \tilde T x$ as the \emph{dynamic programming equation} for the SSP game.

\subsection{An SSP Game Model and its Optimality Properties} \label{sec-sspgame-model}

We now introduce a novel formulation of an SSP game model. We will show that it has favorable optimality properties, including the existence of a unique solution to the dynamic programming equation, the existence of a pair of stationary equilibrium policies, and convergence of value and policy iteration. 

We will put model assumptions on the cost/reward of certain policies depending on whether the termination state can be reached with probability $1$ (w.p.$1$, for short). We need the following definition, which uses terminologies from~\cite{sspgame_pt99}. 

\smallskip
\begin{definition}[Prolonging and Non-prolonging Policies]
We say a pair of policies $(\pi_1, \pi_2)$ is \emph{prolonging} , if under these policies of the two players, there is a positive probability that the termination state $0$ is never reached for some initial state.
Then, a \emph{non-prolonging} pair $(\pi_1, \pi_2)$ is one such that under these policies, the termination state is reached for any initial state w.p.$1$. 
\end{definition}

{\samepage
\begin{assumption}[SSP Game Model] \label{assum-app-sspgame}
\hfill \moveup\moveup
\begin{itemize}
\item[(i)] There exists a policy $\nbpla \in \nDa$ for player I such that for any policy $\nplb \in \nDb$, 
$x_i(\nbpla, \nplb) < + \infty$ for all states $i$. 
\moveup
\item[(ii)] There exists a policy $\nbplb \in \nDb$ for player II such that for any policy $\npla \in \nDa$, 
$x_i(\npla, \nbplb) > - \infty$ for all states $i$.  
\moveup
\item[(iii)] For any pair of  policies $(\npla, \nplb) \in \nDa \times \nDb$ that is prolonging, $x_i(\npla, \nplb) = + \infty$ or $- \infty$ for at least one initial state $i$.\moveup
\end{itemize}
\end{assumption}
}
\smallskip

Assumption~\ref{assum-app-sspgame} has a symmetric form for the two players.
\footnote{ 
Because of the use of liminf, 
the definition of the total cost function $x(\pi_1, \pi_2)$ for a pair of general  policies $(\pi_1, \pi_2)$ is asymmetric for the two players. However, for a pair of stationary policies $(\npla, \nplb) \in \nDa \times \nDb$, it can be shown that under Assumption~\ref{assum-app-sspgame}, the limit of the finite-stage costs (or rewards) always exists (it may be finite, $+ \infty$ or $- \infty$). Because of this, the model assumption we introduce is indeed fully symmetric in terms of the two players.}
It is much broader than the asymmetric SSP model formulation in the earlier work \cite{sspgame_pt99}, as we will explain in Section~\ref{sec-ssp-remarks}.
Assumption~\ref{assum-app-sspgame}(i)-(ii) says that each player has at least one stationary policy to safeguard against infinite loss.
Assumption~\ref{assum-app-sspgame}(iii) says that a prolonging policy pair $(\npla, \nplb)$ will be against the interest of some player.
It also implies that the pair $(\nbpla, \nbplb)$ of policies described in Assumption~\ref{assum-app-sspgame}(i)-(ii) cannot be prolonging.

To derive further implications of Assumption~\ref{assum-app-sspgame}, we consider the decision problem for one player when the other player plays a fixed stationary policy. In that case, the problem of optimizing the total cost or reward for one player is a total cost or reward MDP with a cost-free termination state. For these finite-state compact-control MDP, strong optimality properties are known under certain assumptions on the total cost/reward structure (Bertsekas and Tsitsiklis~\cite{bet-ssp91}). Like \cite{sspgame_pt99} on SSP games, our SSP game model in Assumption~\ref{assum-app-sspgame} is also motivated by these analytical results for single-player problems.

More specifically, let us consider a single-player problem which, when viewed as a two-player game by assuming there is a second dummy player who has singleton control sets, satisfies the model description in Section~\ref{sec-2.1}, including the continuity conditions in Assumption~\ref{assump-continuity}. We will refer to such a problem as a single-player SSP problem, whether it is to minimize total costs or to maximize total rewards. For a single-player SSP, we have from~\cite{bet-ssp91} the following notion of proper policies and a model condition that uses this notion and leads to desirable optimality properties.

\smallskip
\begin{definition}[Proper and Improper Policies in Single-Player SSP] \label{def-proper-ssp}
In a single-player SSP problem, a policy is said to be \emph{proper}  if under that policy, the termination state is reached w.p.$1$ for any initial state; the policy is said to be \emph{improper}, otherwise.
\end{definition}
\smallskip

\noindent The results of~\cite{bet-ssp91} show that if a single-player SSP problem satisfies the following assumption,\\ 
\hspace*{0.25cm} \parbox{0.95\textwidth}{ \vspace*{0.3cm}
 {\bf SSP Model Assumption:}  \it In the class of stationary deterministic policies, 
 there exists a proper policy, and every improper policy incurs cost $+\infty$ for at least one initial state.  \vspace*{0.25cm}}\\
then 
the optimal total cost function is finite,
and it is the unique solution of the dynamic programming equation. Moreover, value iteration converges starting from any initial value. 

Based on these results for single-player SSP, let us introduce a notion of well-behaved policies for each player in SSP games.
Let us call a policy of player I or player II \emph{essentially proper} if, when the player plays that policy, the resulting (total cost or total reward) single-player SSP problem for the other player satisfies the SSP Model Assumption. In other words: 

\smallskip
\begin{definition}[Essentially Proper Policies] \label{def-essproper-sspgame} \hfill
\moveup \moveup
\begin{itemize}
\item[(a)] $\npla \in \nDa$ is \emph{essentially proper} if there exists a policy $\nplb \in \nDb$ such that $(\npla, \nplb)$ is non-prolonging, 
and moreover, for every policy $\nplb \in \nDb$ with $(\npla, \nplb)$ being prolonging, $x_i(\npla, \nplb) = - \infty$ for at least one initial state $i$;\moveup
\item[(b)] $\nplb \in \nDb$ is \emph{essentially proper} if there exists a policy $\npla \in \nDa$ such that $(\npla, \nplb)$ is non-prolonging, 
and moreover, for every policy $\npla \in \nDa$ with $(\npla, \nplb)$ being prolonging, $x_i(\npla, \nplb) = + \infty$ for at least one initial state $i$.\moveup
\end{itemize}
\end{definition}
\smallskip

If player I plays an essentially proper policy $\npla$, the reward-maximization problem player II faces is a single-player total-reward SSP with its dynamic programming operator given by $\tilde T_\npla$ [cf.\ Eq.~(\ref{eq-dpoperator1})]. Similarly, if player II plays an essentially proper policy $\nplb$, then player I has a single-player total-cost SSP problem with its dynamic programming operator given by $\tilde T_\nplb$ [cf.\ Eq.~(\ref{eq-dpoperator1})]. Hence by \cite{bet-ssp91} the essentially proper policies we just defined have the following property.

\smallskip
\begin{lemma} \label{lma-app-properext0}
Let $H = T_\npla$ or $\tilde T_\nplb$, where $\npla \in \nDa$ or $\nplb \in \nDb$ is essentially proper.
Then the equation $x  = H x$ has a unique solution $\bar x$,  and $ \lim_{t \to \infty} H^t x = \bar x$ for all $x \in \rn$.
\end{lemma}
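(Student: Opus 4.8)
The plan is to reduce both cases to the single-player SSP results of Bertsekas and Tsitsiklis~\cite{bet-ssp91} quoted above, by checking that the single-player problem implicitly defined by fixing the essentially proper policy satisfies the SSP Model Assumption.

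Take first $H = \tilde T_\nplb$ with $\nplb \in \nDb$ essentially proper. Fixing player II at $\nplb$ leaves player I facing a single-player total-cost problem: at state $i$ the control set is $\bar U(i)$, the transition probabilities are $p_{ij}\big(\cdot, \nplb(i)\big)$, and the one-stage costs are $c_i\big(\cdot, \nplb(i)\big)$. By Assumption~\ref{assump-continuity}, restricted to the slice $\bar v = \nplb(i)$, these are continuous, respectively lower semicontinuous, in the remaining control variable, so this is a finite-state compact-control SSP problem in the sense of~\cite{bet-ssp91}, and $\tilde T_\nplb$ of Eq.~(\ref{eq-dpoperator1}) is precisely its dynamic programming operator. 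I would then observe that, with $\nplb$ held fixed, a policy $\npla \in \nDa$ is proper in the sense of Definition~\ref{def-proper-ssp} exactly when the pair $(\npla, \nplb)$ is non-prolonging, and improper exactly when $(\npla, \nplb)$ is prolonging. Hence Definition~\ref{def-essproper-sspgame}(b) asserts precisely that this single-player SSP has a proper (stationary deterministic) policy and that every improper one incurs cost $+\infty$ for at least one initial state, i.e., it satisfies the SSP Model Assumption. Applying the cited results of~\cite{bet-ssp91}, the optimal cost vector $\bar x$ is finite, is the unique solution of $x = \tilde T_\nplb x$, and $(\tilde T_\nplb)^t x \to \bar x$ for every $x \in \rn$, which is the claim.

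The case $H = T_\npla$ with $\npla \in \nDa$ essentially proper is symmetric. Fixing player I at $\npla$ leaves player II with a single-player total-reward problem whose dynamic programming operator is $T_\npla$; passing to $-x$ and $-c$ turns it into a total-cost SSP problem of the type covered by~\cite{bet-ssp91}. Under this change of sign, the requirement in Definition~\ref{def-essproper-sspgame}(a) that some $(\npla, \nplb)$ be non-prolonging and that every prolonging $(\npla, \nplb)$ give reward $-\infty$ for at least one initial state becomes exactly the SSP Model Assumption (existence of a proper policy, and cost $+\infty$ somewhere for every improper policy). Thus $\bar x$, now the optimal total-reward vector for player II, is finite, is the unique fixed point of $T_\npla$, and equals $\lim_{t\to\infty}(T_\npla)^t x$ for every $x$.

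The argument is mostly bookkeeping; the points requiring a little care are the identification ``improper $\Leftrightarrow$ prolonging'' for the fixed-opponent single-player problem, and the sign change in the second case, which must correctly send the $-\infty$-reward condition of Definition~\ref{def-essproper-sspgame}(a) to the $+\infty$-cost condition on improper policies in the SSP Model Assumption. One should also note that the $\liminf$-based game cost of a stationary pair agrees with the total-cost criterion used in~\cite{bet-ssp91}; this is not an obstacle, since once the SSP Model Assumption holds the partial-sum costs of every stationary policy converge, so the choice of $\liminf$, $\limsup$, or limit is immaterial.
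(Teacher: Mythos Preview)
Your proposal is correct and follows exactly the approach the paper takes: the paragraph immediately preceding the lemma explains that the definition of an essentially proper policy was designed so that the resulting single-player SSP satisfies the SSP Model Assumption, and the lemma is then a direct consequence of~\cite{bet-ssp91}. Your write-up simply makes explicit the bookkeeping (the identification of proper/improper with non-prolonging/prolonging, the sign change in the $T_\npla$ case, and the semicontinuity inherited from Assumption~\ref{assump-continuity}) that the paper leaves implicit.
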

\smallskip

For any pair of essentially proper policies of the two players, we have the following fact:

\smallskip
\begin{lemma}\label{lma-app-properext1}
Let $\npla \in \nDa$ and $\nplb \in \nDb$ be essentially proper. 
Then,\moveup\moveup 
\begin{itemize}
\item[(i)] $(\npla, \nplb)$ is non-prolonging; and 
\item[(ii)] $\bar x(\npla) \geq \tilde x(\nplb)$, where $\bar x(\npla), \tilde x(\nplb)$ are the unique solution of $x = T_\npla x$ and $x = \tilde T_{\nplb}x$, respectively. \moveup
\end{itemize}
\end{lemma}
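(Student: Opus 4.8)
The approach is to combine the single-player SSP facts recalled in Lemma~\ref{lma-app-properext0} with the componentwise inequalities~(\ref{app-ineq-1}) and the monotonicity of the dynamic programming operators; both parts reduce to a monotone value-iteration comparison, so they can be handled by essentially the same engine.

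\emph{Part (i).} I would argue by contradiction: suppose $(\npla,\nplb)$ is prolonging. Since $\nplb$ is essentially proper, Lemma~\ref{lma-app-properext0} applied to $H=\tilde T_\nplb$ gives that $\tilde x(\nplb)$ is finite and $\tilde T_\nplb^N 0 \to \tilde x(\nplb)$. By~(\ref{app-ineq-1}) we have $\tilde T_\nplb x \le T_{\npla\nplb} x$ for all $x\in\rn$, and since $T_{\npla\nplb}$ and $\tilde T_\nplb$ are monotone this yields $T_{\npla\nplb}^N 0 \ge \tilde T_\nplb^N 0$ for every $N$. Now $[T_{\npla\nplb}^N 0]_i$ is exactly the expected cost of the first $N$ stages under the stationary pair $(\npla,\nplb)$ from initial state $i$ (state $0$ being absorbing and cost-free), so $x_i(\npla,\nplb)=\liminf_{N\to\infty}[T_{\npla\nplb}^N 0]_i \ge \tilde x_i(\nplb) > -\infty$ for every $i$. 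On the other hand, because $\npla$ is essentially proper and $(\npla,\nplb)$ is prolonging, Definition~\ref{def-essproper-sspgame}(a) forces $x_{i_0}(\npla,\nplb)=-\infty$ for some $i_0$, a contradiction. Hence $(\npla,\nplb)$ is non-prolonging. (Symmetrically, one could instead use the finite \emph{upper} bound $x_i(\npla,\nplb)\le \bar x_i(\npla)$, obtained from essential properness of $\npla$ via $T_{\npla\nplb}^N 0 \le T_\npla^N 0 \to \bar x(\npla)$, against Definition~\ref{def-essproper-sspgame}(b).)

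\emph{Part (ii).} The plan is a telescoping monotone iteration. Since $\npla$ is essentially proper, Lemma~\ref{lma-app-properext0} gives $T_\npla \bar x(\npla)=\bar x(\npla)$, so by~(\ref{app-ineq-1}), $\tilde T_\nplb \bar x(\npla)\le T_{\npla\nplb}\bar x(\npla)\le T_\npla \bar x(\npla)=\bar x(\npla)$. Applying the monotone operator $\tilde T_\nplb$ repeatedly and using induction gives $\tilde T_\nplb^t \bar x(\npla)\le \bar x(\npla)$ for all $t\ge1$. Since $\nplb$ is essentially proper, Lemma~\ref{lma-app-properext0} gives $\tilde T_\nplb^t \bar x(\npla)\to \tilde x(\nplb)$ as $t\to\infty$; passing to the limit in the last inequality yields $\tilde x(\nplb)\le \bar x(\npla)$.

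The only genuine obstacle is in Part (i): one must extract, from the essential properness of a single one of the two policies, a finite one-sided bound on the a priori possibly infinite vector $x(\npla,\nplb)$. Identifying the iterates $T_{\npla\nplb}^N 0$ with finite-horizon costs, comparing them with the value iteration $\tilde T_\nplb^N 0$ (or $T_\npla^N 0$), and taking $\liminf$ is what delivers this bound; the $\liminf$ in the definition of $x(\npla,\nplb)$ causes no trouble, since the $\liminf$ of a sequence bounded below is bounded below. Part (ii) is then routine once the monotone telescoping $\bar x(\npla)\ge \tilde T_\nplb\bar x(\npla)\ge \tilde T_\nplb^2\bar x(\npla)\ge\cdots\to\tilde x(\nplb)$ is noticed, and I do not expect further difficulty beyond invoking Lemma~\ref{lma-app-properext0} correctly for the two operators $T_\npla$ and $\tilde T_\nplb$.
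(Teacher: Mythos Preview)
Your proof is correct. Part (ii) is essentially identical to the paper's argument.

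For Part (i), however, you take a genuinely different and more elementary route. The paper first proves an auxiliary structural fact: if $\npla$ is essentially proper, then for \emph{any} prolonging pair $(\npla,\nplb')$ one cannot have $x_i(\npla,\nplb')=+\infty$ for any state $i$. It does so by a Markov chain argument (isolating a recurrent class with strictly positive average cost and splicing in a non-prolonging policy outside it to build a policy $\nplb'$ that contradicts Definition~\ref{def-essproper-sspgame}(a)); the symmetric statement for $\nplb$ then rules out $-\infty$ values, and the two exclusions together collide with what Definition~\ref{def-essproper-sspgame} forces on a prolonging pair. By contrast, you bypass the recurrent-class construction entirely: you bound the finite-horizon costs $T_{\npla\nplb}^N 0$ below by $\tilde T_\nplb^N 0$ via~(\ref{app-ineq-1}) and monotonicity, take $\liminf$, and invoke Lemma~\ref{lma-app-properext0} to get $x_i(\npla,\nplb)\ge \tilde x_i(\nplb)>-\infty$ directly, which already contradicts Definition~\ref{def-essproper-sspgame}(a). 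Your argument is shorter and uses only the value-iteration machinery already in play; the paper's argument, on the other hand, yields the slightly stronger side fact that an essentially proper $\npla$ never admits a prolonging pair with $+\infty$ cost, which is of some independent interest but is not needed for the lemma as stated.
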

\begin{proof}
To prove (i), first we note that although for a policy $\npla$ to be essentially proper, Definition~\ref{def-essproper-sspgame}(a)  does not exclude that $x_i(\npla, \nplb) = + \infty$ for some state $i$ and a prolonging policy pair $(\npla, \nplb)$,  this cannot happen.
Otherwise, we can derive a contradiction by constructing a policy $\nplb' \in \nDb$ for player II such that $(\npla, \nplb')$ is prolonging but $x_i(\npla, \nplb') > - \infty$ for all states $i$. 
This policy $\nplb'$ can be chosen as follows. Suppose $x_i(\npla, \nplb) = + \infty$ for some state $i$. Then, for the Markov chain induced by $(\npla, \nplb)$,  by \cite[Theorem 9.4.1, p.\ 472]{puterman94}, there exists a recurrent class $E$ such that the average cost on $E$ is strictly greater than $0$. Let $\nplb'$ be identical to $\nplb$ for states in $E$ and identical to a policy $\nbplb$ for the rest of the states, where $\nbplb$ is such that $(\npla, \nbplb)$ is non-prolonging and the existence of $\nbplb$ is ensured by the definition of $\npla$ as an essentially proper policy. The Markov chain induced by $(\npla, \nplb')$ has two recurrent classes, $E$ and $\{0\}$, so $(\npla, \nplb')$ is prolonging, and moreover, the average cost on $E$, [which is equal to the average cost on $E$ under $(\npla, \nplb)$], is strictly greater than $0$. Then, by \cite[Theorem 9.4.1, p.\ 472]{puterman94}, $x_i(\npla, \nplb') = + \infty$ for all $i \in E$, while for the rest of the states $i$, we have that either $x_i(\npla, \nplb') = + \infty$ or $x_i(\npla, \nplb')$ is finite. Hence, $x_i(\npla, \nplb') > - \infty$ for all $i$, and since the pair of policies $(\npla, \nplb')$ is prolonging, this contradicts the definition of $\npla$ being essentially proper.
Similarly, in Definition~\ref{def-essproper-sspgame}(b), it cannot happen that  $x_i(\npla, \nplb) = - \infty$ for some state $i$ and a prolonging policy pair $(\npla, \nplb)$ when $\nplb$ is essentially proper. 

On the other hand, when $\npla$ and $\nplb$ are essentially proper, Definition~\ref{def-essproper-sspgame} dictates that if $(\npla, \nplb)$ were prolonging, there must exist some states $i, j$ with $x_i(\npla, \nplb) = -\infty$ and $x_j(\npla, \nplb) = +\infty$, which is impossible as we just argued. Therefore, the pair $(\npla, \nplb)$ must be non-prolonging.

We now prove (ii). Since $\npla$ and $\nplb$ are essentially proper, by Lemma~\ref{lma-app-properext0}, the equations 
$x = T_\npla x$ and $x = \tilde T_{\nplb}x$ have a unique solution.
Denote $\bar x = \bar x(\npla), \tilde x = \tilde x(\nplb)$.  Since $ \bar x = T_\npla \bar x \geq \tilde T_{\nplb} \bar x$ [cf.\ Eq.~(\ref{app-ineq-1})] and $\tilde T_{\nplb}$ is monotone, we have that for all $t$, $\bar x \geq {\tilde T}^t_{\nplb} \bar x$. By Lemma~\ref{lma-app-properext0}, $\lim_{t \to \infty} {\tilde T}^t_{\nplb} \bar x  = \tilde x$. Therefore $\bar x \geq \tilde x$.
\end{proof}
\smallskip

In terms of essentially proper policies, Assumption~\ref{assum-app-sspgame} has an important implication given below.

\smallskip
{\samepage
\begin{lemma} \label{lma-app-properext}
Suppose Assumption~\ref{assum-app-sspgame} holds. Let $\npla \in \nDa$ and $\nplb \in \nDb$. Then we have:
\moveup\moveup
\begin{itemize}
\item[(i)] If there exists $x \in \rn$ such that $x \geq T_\npla x$, then $\npla$ is essentially proper. \moveup\moveup 
\item[(ii)] If there exists $x \in \rn$ such that $x \leq \tilde T_\nplb x$, then $\nplb$ is essentially proper. \moveup
\end{itemize}
Moreover, every player has at least one essentially proper stationary policy---$\nbpla$ for player I and $\nbplb$ for player II where $\nbpla$ and $\nbplb$ are as in Assumption~\ref{assum-app-sspgame}(i)-(ii).
\end{lemma}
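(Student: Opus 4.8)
The plan is to verify, for the policies named in (i)--(ii), the two defining clauses of an essentially proper policy in Definition~\ref{def-essproper-sspgame}, and to derive the ``moreover'' assertion directly from Assumption~\ref{assum-app-sspgame} without appealing to (i)--(ii). The single computational device used throughout is the following: if $x \geq T_\npla x$, then by (\ref{app-ineq-1}) we have $x \geq T_{\npla\nplb} x$ for every $\nplb \in \nDb$, hence $x \geq T_{\npla\nplb}^t x$ for all $t$ by monotonicity of $T_{\npla\nplb}$; expanding $T_{\npla\nplb}^t x = \sum_{k=0}^{t-1} P(\npla,\nplb)^k c(\npla,\nplb) + P(\npla,\nplb)^t x$ and using that $P(\npla,\nplb)$ is substochastic (so $\|P(\npla,\nplb)^t x\|_\infty \leq \|x\|_\infty$), the $t$-stage expected cost $\E_{(\npla,\nplb)}\big[\sum_{k=0}^{t-1} c_{i_k} \mid i_0 = i\big]$ is at most $x_i + \|x\|_\infty$ for every $t$ and $i$; taking $\liminf$ in $t$ yields $x_i(\npla,\nplb) < +\infty$ for all $i$ and all $\nplb \in \nDb$. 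By the symmetric computation, $x \leq \tilde T_\nplb x$ forces $x_i(\npla,\nplb) > -\infty$ for all $i$ and all $\npla \in \nDa$.

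For part (i), assume $x \geq T_\npla x$. I would first exhibit the non-prolonging witness needed for Definition~\ref{def-essproper-sspgame}(a): taking $\nplb = \nbplb$ from Assumption~\ref{assum-app-sspgame}(ii), the bound above gives $x_i(\npla,\nbplb) < +\infty$, while Assumption~\ref{assum-app-sspgame}(ii) gives $x_i(\npla,\nbplb) > -\infty$, so $x(\npla,\nbplb)$ is finite at every state, and the contrapositive of Assumption~\ref{assum-app-sspgame}(iii) shows $(\npla,\nbplb)$ is non-prolonging. For the second clause, take any $\nplb$ with $(\npla,\nplb)$ prolonging: Assumption~\ref{assum-app-sspgame}(iii) provides a state with $x_i(\npla,\nplb) \in \{+\infty,-\infty\}$, and since the uniform upper bound rules out $+\infty$ at every state, that state has $x_i(\npla,\nplb) = -\infty$, exactly as Definition~\ref{def-essproper-sspgame}(a) requires; hence $\npla$ is essentially proper. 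Part (ii) is the mirror image using $x \leq \tilde T_\nplb x$: the lower bound together with Assumption~\ref{assum-app-sspgame}(i) produces the non-prolonging pair $(\nbpla,\nplb)$, and for any prolonging $(\npla,\nplb)$, Assumption~\ref{assum-app-sspgame}(iii) plus the lower bound $x_i(\npla,\nplb) > -\infty$ forces $x_i(\npla,\nplb) = +\infty$ at some state, matching Definition~\ref{def-essproper-sspgame}(b).

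For the ``moreover'' claim I would argue straight from Assumption~\ref{assum-app-sspgame}. The pair $(\nbpla,\nbplb)$ is non-prolonging because Assumption~\ref{assum-app-sspgame}(i) and (ii) jointly force $x(\nbpla,\nbplb)$ to be finite at every state, so the contrapositive of Assumption~\ref{assum-app-sspgame}(iii) applies; this supplies each player with a non-prolonging witness against $\nbpla$, resp.\ $\nbplb$. For the remaining clause for $\nbpla$: if $(\nbpla,\nplb)$ is prolonging, Assumption~\ref{assum-app-sspgame}(iii) gives a state with value in $\{+\infty,-\infty\}$ and Assumption~\ref{assum-app-sspgame}(i) excludes $+\infty$, so $x_i(\nbpla,\nplb) = -\infty$ at some $i$, which is Definition~\ref{def-essproper-sspgame}(a); the case of $\nbplb$ is symmetric, using Assumption~\ref{assum-app-sspgame}(ii) in place of (i).

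The only delicate point, and the step I would handle carefully, is the passage from the uniform bound on the finite-horizon expected costs to a bound on $x_i(\npla,\nplb)$, which is defined through a $\liminf$: a bound from above survives because $\liminf_t J_t(i) \leq \sup_t J_t(i)$, so I need not invoke the (valid but unnecessary here) fact from the footnote that the limit exists for stationary pairs; a bound from below survives because $\liminf_t J_t(i) \geq \inf_t J_t(i)$. I would also be explicit that $x \geq T_\npla x$ does give $x \geq T_{\npla\nplb}x$ for each individual $\nplb \in \nDb$ --- immediate from the componentwise supremum defining $T_\npla$, equivalently from (\ref{app-ineq-1}) --- after which everything reduces to routine bookkeeping with monotonicity and substochasticity.
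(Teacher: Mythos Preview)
Your proof is correct and follows essentially the same approach as the paper. The paper argues more tersely---it simply says ``by the monotonicity of $T_{\npla\nplb}$ and the definition of $x_i(\npla,\nplb)$, we have $x_i(\npla,\nplb) < +\infty$''---whereas you spell out the expansion $T_{\npla\nplb}^t x = \sum_{k=0}^{t-1} P(\npla,\nplb)^k c(\npla,\nplb) + P(\npla,\nplb)^t x$ and the substochasticity bound explicitly; otherwise the logical skeleton (use $\nbplb$ as the non-prolonging witness via Assumption~\ref{assum-app-sspgame}(ii) and the contrapositive of (iii), then use (iii) plus the upper bound to force $-\infty$ on prolonging pairs) is identical in both.
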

}

\begin{proof}
We prove (i); the proof for (ii) is entirely symmetric. 
For any policy $\nplb \in \nDb$, since $x \geq T_\npla x \geq T_{\npla\nplb} x$ [cf.\ Eq.~(\ref{app-ineq-1})], 
by the monotonicity of $T_{\npla\nplb}$ and the definition of $x_i(\npla, \nplb)$, we have that $x_i(\npla, \nplb) < + \infty$ for any state $i$. 
Then for the policy $\nbplb$ of player II in Assumption~\ref{assum-app-sspgame}(ii), the pair $(\npla, \nbplb)$ must be non-prolonging by Assumption~\ref{assum-app-sspgame}(iii), and also by Assumption~\ref{assum-app-sspgame}(iii),
for every $\nplb \in \nDb$ such that $(\npla, \nplb)$ is prolonging, $x_i(\npla, \nplb) = - \infty$ for at least one state $i$. By Definition~\ref{def-essproper-sspgame}(a), this shows that $\npla$ is essentially proper. 

We now prove the last statement of the lemma. Consider the policies $\nbpla$ and $\nbplb$ in Assumption~\ref{assum-app-sspgame}(i) and (ii). As discussed immediately after that assumption, the pair $(\nbpla, \nbplb)$ is non-prolonging. Moreover, Assumption~\ref{assum-app-sspgame}(i) and (iii) together imply that for any policy $\nplb \in \nDb$ such that $(\nbpla, \nplb)$ is prolonging, we must have $x_i(\nbpla, \nplb) = - \infty$ for some initial state $i$. Hence $\nbpla$ is essentially proper for player I by Definition~\ref{def-essproper-sspgame}(a). Similarly, the policy $\nbplb$ is essentially proper for player I by Assumption~\ref{assum-app-sspgame}(ii)-(iii) and Definition~\ref{def-essproper-sspgame}(b).
\end{proof}
\smallskip

We are now ready to establish the optimality properties for the proposed SSP game model. Some of the proof steps below appear similar to those in \cite{sspgame_pt99}.

\smallskip
\begin{theorem}[Existence of Value and Equilibrium Policies] \label{thm-opt}
Under Assumptions~\ref{assump-continuity}-\ref{assum-app-sspgame}, the game has a finite value function $x^*$, which is the unique solution of the dynamic programming equation $x = T x$. Furthermore, any $\npla^* \in \nDa$, $\nplb^* \in \nDb$ such that $x^* = T_{\npla^*} x^* = \tilde T_{\nplb^*} x^*$ are essentially proper (hence $(\npla^*, \nplb^*)$ is non-prolonging). Such policies exist, and they form a pair of equilibrium policies for the game and are optimal for each player.
\end{theorem}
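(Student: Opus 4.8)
The plan is to first construct a solution of $x=Tx$, then show it is the only one, and finally extract from it a pair of equilibrium policies. By Lemma~\ref{lma-app-properext}, the policies $\nbpla,\nbplb$ of Assumption~\ref{assum-app-sspgame}(i)-(ii) are essentially proper, so by Lemma~\ref{lma-app-properext0} the equations $x=T_{\nbpla}x$ and $x=\tilde T_{\nbplb}x$ have unique solutions $\bar x(\nbpla)$ and $\tilde x(\nbplb)$. Using Eq.~(\ref{app-ineq-2}) and the fixed-point property, $T\bar x(\nbpla)\le T_{\nbpla}\bar x(\nbpla)=\bar x(\nbpla)$, so by monotonicity $\{T^t\bar x(\nbpla)\}$ is nonincreasing; symmetrically, using Assumption~\ref{assump-regularity} (so $\tilde T=T$), $T\tilde x(\nbplb)=\tilde T\tilde x(\nbplb)\ge\tilde T_{\nbplb}\tilde x(\nbplb)=\tilde x(\nbplb)$, so $\{T^t\tilde x(\nbplb)\}$ is nondecreasing. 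By Lemma~\ref{lma-app-properext1}(ii), $\bar x(\nbpla)\ge\tilde x(\nbplb)$, hence by monotonicity $T^t\bar x(\nbpla)\ge T^t\tilde x(\nbplb)\ge\tilde x(\nbplb)$ for all $t$; the nonincreasing sequence $T^t\bar x(\nbpla)$ is thus bounded below and converges to some $x^*\in\rn$. Since $T$ is nonexpansive in $\|\cdot\|_\infty$ (each $T_{\npla\nplb}$ is, because $P(\npla,\nplb)$ is substochastic, and the componentwise $\inf$/$\sup$ over compact control sets preserves this), passing to the limit gives $x^*=Tx^*$.

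For uniqueness, let $x'$ solve $x=Tx$. Since the infimum defining $T$ is attained, there is $\npla'\in\nDa$ with $x'=Tx'=T_{\npla'}x'$; in particular $x'\ge T_{\npla'}x'$, so $\npla'$ is essentially proper by Lemma~\ref{lma-app-properext}(i), and then $x'=\bar x(\npla')$ by Lemma~\ref{lma-app-properext0}. Likewise, by Assumption~\ref{assump-regularity} and attainment of the supremum, there is $\nplb'\in\nDb$ with $x'=\tilde Tx'=\tilde T_{\nplb'}x'$, hence $\nplb'$ is essentially proper by Lemma~\ref{lma-app-properext}(ii) and $x'=\tilde x(\nplb')$. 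Given two solutions $x',x''$, extract $\npla'$ from $x'$ and $\nplb''$ from $x''$ as above; both are essentially proper, so Lemma~\ref{lma-app-properext1}(ii) gives $x'=\bar x(\npla')\ge\tilde x(\nplb'')=x''$, and the symmetric argument gives $x''\ge x'$. Hence $x=Tx$ has the unique solution $x^*$, and $x^*\in\rn$ is finite.

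Applying attainment once more to $Tx^*=x^*$ and (via Assumption~\ref{assump-regularity}) to $\tilde Tx^*=x^*$ yields $\npla^*\in\nDa$, $\nplb^*\in\nDb$ with $x^*=T_{\npla^*}x^*=\tilde T_{\nplb^*}x^*$; by Lemma~\ref{lma-app-properext} these are essentially proper, and by Lemma~\ref{lma-app-properext1}(i) the pair $(\npla^*,\nplb^*)$ is non-prolonging, so such policies exist and any such pair has these properties. It remains to check the equilibrium inequalities. When player I fixes the essentially proper policy $\npla^*$, player II faces a single-player total-reward SSP satisfying the SSP Model Assumption (by Definition~\ref{def-essproper-sspgame}(a)), with dynamic programming operator $T_{\npla^*}$; by the results of \cite{bet-ssp91}, its optimal total reward over all policies in $\Pi_2$ equals the unique solution $\bar x(\npla^*)$ of $x=T_{\npla^*}x$, which is $x^*$, so $x(\npla^*,\pi_2)\le x^*$ for every $\pi_2\in\Pi_2$. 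Symmetrically, fixing the essentially proper $\nplb^*$ makes player I's problem a total-cost SSP with operator $\tilde T_{\nplb^*}$ and optimal cost $\tilde x(\nplb^*)=x^*$, so $x(\pi_1,\nplb^*)\ge x^*$ for every $\pi_1\in\Pi_1$. Taking $\pi_2=\nplb^*$ and $\pi_1=\npla^*$ in these bounds forces $x(\npla^*,\nplb^*)=x^*$, and therefore $x(\npla^*,\pi_2)\le x(\npla^*,\nplb^*)\le x(\pi_1,\nplb^*)$ for all $\pi_1,\pi_2$; thus $(\npla^*,\nplb^*)$ is a pair of equilibrium policies, $x^*=x(\npla^*,\nplb^*)$ is the value function, and $\npla^*,\nplb^*$ are optimal for the respective players.

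The step I expect to be the main obstacle is the last one: rigorously reducing the opponent's problem against a fixed essentially proper stationary policy to a single-player SSP that meets exactly the hypotheses of \cite{bet-ssp91}, and in particular verifying that the $\liminf$-based total reward (resp.\ cost) over the full class of history-dependent randomized policies $\Pi_2$ (resp.\ $\Pi_1$) is characterized by the fixed point of $T_{\npla^*}$ (resp.\ $\tilde T_{\nplb^*}$). By comparison, the existence and uniqueness steps are largely bookkeeping with monotonicity, nonexpansiveness, and Lemmas~\ref{lma-app-properext0}--\ref{lma-app-properext}.
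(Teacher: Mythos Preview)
Your proof is correct and follows essentially the same route as the paper: construct a fixed point of $T$ by squeezing the monotone sequence $T^t\bar x(\nbpla)$ between $\bar x(\nbpla)$ and $\tilde x(\nbplb)$, prove uniqueness via essentially proper policies extracted from any fixed point, and derive the equilibrium inequalities by invoking \cite{bet-ssp91} on the single-player SSP faced by the opponent of an essentially proper policy. Your packaging differs only cosmetically---you do existence before uniqueness, you invoke Lemma~\ref{lma-app-properext1}(ii) directly for uniqueness rather than rerunning the value-iteration convergence, and you obtain $x(\npla^*,\nplb^*)=x^*$ by sandwiching instead of appealing to the uncontrolled SSP induced by the non-prolonging pair---and the obstacle you flag (carefully translating the $\liminf$ total-reward problem against $\npla^*$ into a total-cost SSP to which \cite{bet-ssp91} applies) is exactly the point the paper addresses in a footnote.
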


\begin{proof}  
We show first that $T$ can have at most one fixed point. 
Suppose that both $x$ and $x'$ satisfy $x = T x$ and $x' = T x'$. 
Under Assumption~\ref{assump-continuity}, there exist stationary deterministic policies $\npla$ and $\npla'$ such that $T_\npla x = T x$ and $T_{\npla'} x' = T x'$. By Lemma~\ref{lma-app-properext}, both $\npla$ and $\npla'$ are essentially proper.
Since $x = T x \leq T_{\npla'} x$ [cf.\ Eq.~(\ref{app-ineq-2})], by the monotonicity of $T_{\npla'}$, we have that for all $t$, $x \leq T_{\npla'}^t x$. On the other hand, since $x' = T_{\npla'} x'$ and $\npla'$ is essentially proper, we have by Lemma~\ref{lma-app-properext0} that $\{ T_{\npla'}^t x\}$ converges to $x'$. Therefore $x \leq x'$. A symmetric argument yields $x' \leq x$, and hence $x = x'$.

We now show that $T$ has a fixed point. Let $\nbpla \in \nDa$ and $\nbplb \in \nDb$ be  essentially proper policies, which exist under Assumption~\ref{assum-app-sspgame} (Lemma~\ref{lma-app-properext}).
By Lemma~\ref{lma-app-properext0}, there exist a unique $\bar x$ such that $\bar x = T_{\nbpla} \bar x$, and a unique $\tilde x$ such that $\tilde x = \tilde T_{\nbplb} \tilde x$.
By Lemma~\ref{lma-app-properext1}(ii), $\bar x \geq \tilde x$.
Since $T_{\nbpla} \bar x \geq T \bar x$ and $ \tilde T_{\nbplb} \tilde x  \leq  T \tilde x$ [cf.\ Eq.~(\ref{app-ineq-2})],
we  also have $\bar x \geq T \bar x$ and  $\tilde x  \leq  T \tilde x$.
Using the monotonicity of $T$ and the fact that $\bar x \geq \tilde x$, it follows that
$\{T^t \bar x\}$ is a non-increasing sequence bounded below by $\tilde x$ and hence converges to some $x^*$.
Since $T^{t+1} \bar x = T (T^t \bar x)$,  by the continuity of $T$, $x^*$ must satisfy $x^* = T x^*$. Thus $x^*$ is a fixed point of $T$ and hence the unique fixed point of $T$.

Now let $(\npla^*, \nplb^*)$ be stationary deterministic policies such that $T_{\npla^*} x^* = T x^*$ and $\tilde T_{\nplb^*} x^* = \tilde T x^*$; they exist under Assumption~\ref{assump-continuity}.
We have $x^* =  T_{\npla^*} x^* =  \tilde T_{\nplb^*} x^*$ because $T x^* = \tilde T x^*$ under Assumption~\ref{assump-regularity}. 
Lemma~\ref{lma-app-properext}(i) and (ii) then imply that $\npla^*$ and $\nplb^*$ are essentially proper, so 
by the result of \cite{bet-ssp91}, $x^*$ is the optimal total reward function 
(optimal total cost function, respectively) of the single-player SSP problem for player II (player I, respectively) when player I takes policy $\npla^*$ (player II takes policy $\nplb^*$, respectively).
This optimality of $x^*$ translates to
\footnote{More precisely, the argument for $x(\npla^*, \pi_2) \leq x^*$, $\pi_2 \in \Pi_2$, is the following. For the total reward SSP problem resulting from player I taking policy $\npla^*$,
consider the corresponding total cost problem with one-stage costs being $-c_i(\bar u, \bar v)$. Then, by~\cite{bet-ssp91}, $-x^*$ is the optimal total cost function, and hence, for every state $i$ and $\pi_2 \in \Pi_2$,
\begin{align*}
-x^*_i & \leq \liminf_{t \to \infty} \E_{\npla^*\pi_2} \Big[ - \sum_{k=0}^t c_{i_k} (\bar u_k, \bar v_k) \mid i_0 = i \Big]  \\
  & = -  \limsup_{t \to \infty} \E_{\npla^*\pi_2} \Big[  \sum_{k=0}^t c_{i_k}(\bar u_k, \bar v_k) \mid i_0 = i \Big]  \leq - \liminf_{t \to \infty} \E_{\npla^*\pi_2} \Big[  \sum_{k=0}^t c_{i_k}(\bar  u_k, \bar v_k) \mid i_0 = i \Big] =  - x_i(\npla^*, \pi_2),
\end{align*}  
which is  $x^* \geq x(\npla^*, \pi_2)$.
}
\begin{equation} \label{prf-thm2.1-a}
  x(\npla^*, \pi_2) \leq x^* \leq x(\pi_1, \nplb^*), \qquad \forall \, \pi_1 \in \Pi_1, \pi_2 \in \Pi_2.
\end{equation}  

We now prove $x^* = x(\npla^*, \nplb^*)$. Since $\npla^*$ and $\nplb^*$ are essentially proper, by Lemma~\ref{lma-app-properext1}(i), $(\npla^*, \nplb^*)$ is non-prolonging. Applying the result of \cite{bet-ssp91} to the process induced by the non-prolonging pair $(\npla^*, \nplb^*)$, which can be viewed as an uncontrolled SSP with a single (dummy) proper policy, we obtain that the total cost function under $(\npla^*, \nplb^*)$ is the unique solution of the dynamic programming equation $ x = T_{\npla^* \nplb^*}  x$. On the other hand,
we have $T_{\npla^* \nplb^*} x^* = x^*$ because $x^* = \tilde T_{\nplb^*} x^* \leq T_{\npla^* \nplb^*} x^* \leq T_{\npla^*} x^* = x^*$ [cf.\ Eq.~(\ref{app-ineq-1})].
Therefore, $x^* = x(\npla^*, \nplb^*)$. Combining this with Eq.~(\ref{prf-thm2.1-a}), we then have that $(\npla^*, \nplb^*)$ is a pair of equilibrium (and optimal) policies for the two players and $x^*$ is the value function of the game.
\end{proof}

Next we consider value and policy iteration. Recall a well-known fact: if a monotone operator $H: \rn \to \rn$ is nonexpansive with respect to the sup-norm (i.e., $\|H x - H y \|_\infty \leq \| x - y \|_\infty$) and has a unique fixed point $\bar x$, then fixed point iterations $H^k x$ converge to $\bar x$ for any initial $x$ (see e.g., \cite[Lemma 2.1]{yb_ssp11} for a proof). The monotone mapping $T$ is nonexpansive with respect to the sup-norm, and under Assumptions~\ref{assump-continuity}-\ref{assum-app-sspgame}, it has a unique fixed point by Theorem~\ref{thm-opt}. Therefore, the iterates $\{ x_t\}$ generated by value iteration, $x_{t+1} = T x_t$, converge to $x^*$ for any initial $x_0 \in \rn$. 

Policy iteration for each player starting with an essentially proper policy also converges under Assumptions~\ref{assump-continuity}-\ref{assum-app-sspgame}. This is shown below. Since our SSP game model is symmetric for the two players, it suffices to discuss the case of player I. In policy iteration,
starting from a policy $\npla_0 \in \nDa$ that is essentially proper, we define recursively $x_t \in \rn$ and policy $\npla_{t+1} \in \nDa$ by
\begin{equation} \label{eq-app-polite}
    x_t = T_{\npla_t} x_t, \qquad  T_{\npla_{t+1}} x_t = T x_t, \qquad t \geq 0.
 \end{equation}   
By induction, in the above $x_t$ is well-defined (Lemma~\ref{lma-app-properext0}), $\npla_{t+1}$ is well-defined under Assumption~\ref{assump-continuity}, and 
since $x_t \geq T_{\npla_{t+1}} x_t$, all $\npla_{t+1}$ thus generated are essentially proper (Lemma~\ref{lma-app-properext}). 
It can also be seen that $\{ x_t\}$ is a non-increasing sequence (using the fact that $T_{\npla_{t+1}} x_t \leq x_t$).
We summarize these results in the theorem below.

\smallskip
\begin{theorem}[Convergence of Value and Policy Iteration] \label{thm-opt-vipi} 
Under Assumptions~\ref{assump-continuity}-\ref{assum-app-sspgame}, with $x^*$ being the value function of the game, the following holds: \moveup\moveup
\begin{itemize}
\item[(i)] Convergence of value iteration: For any $x \in \rn$, $\lim_{t \to \infty} T^t x = x^*$.
\item[(ii)] Convergence of policy iteration: Let $\{ x_t\}$ and $\{ \npla_t\}$ be defined by Eq.~(\ref{eq-app-polite}) with $\npla_0$ being essentially proper for player I.
Then all $\npla_t$ are essentially proper. Furthermore, $\lim_{t \to \infty} x_t = x^*$, and any cluster point $\npla_\infty$ of $\{\npla_t\}$ is essentially proper and optimal for player I.\moveup 
\end{itemize}
\end{theorem}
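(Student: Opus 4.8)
Part (i) is essentially contained in the remarks preceding the statement: $T$ is monotone and nonexpansive in the sup-norm, and by Theorem~\ref{thm-opt} it has the unique fixed point $x^*$; hence the cited fact on monotone sup-norm-nonexpansive maps (\cite[Lemma 2.1]{yb_ssp11}) gives $T^t x \to x^*$ for every $x \in \rn$. I would simply record this.

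For part (ii), the well-definedness of $\{x_t\}$ and $\{\npla_t\}$, the essential properness of every $\npla_t$ (via Lemma~\ref{lma-app-properext}), and the monotonicity $x_{t+1} \le x_t$ have all been established in the paragraph preceding the theorem, so I would take them as given. The first real step is a lower bound: from $x_t = T_{\npla_t} x_t$ and $T x_t \le T_{\npla_t} x_t$ [cf.\ Eq.~(\ref{app-ineq-2})] we get $x_t \ge T x_t$, so by monotonicity $x_t \ge T^k x_t$ for all $k$, and letting $k \to \infty$ and using part~(i) yields $x_t \ge x^*$. Thus $\{x_t\}$ is non-increasing and bounded below by $x^*$, so $x_t \downarrow x_\infty$ for some $x_\infty \ge x^*$. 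Next, since $x_{t+1} \le x_t$ and $T_{\npla_{t+1}}$ is monotone, $x_{t+1} = T_{\npla_{t+1}} x_{t+1} \le T_{\npla_{t+1}} x_t = T x_t$; letting $t \to \infty$ and using the continuity of $T$ gives $x_\infty \le T x_\infty$, whence $x_\infty \le T^k x_\infty \to x^*$ by part~(i), so $x_\infty \le x^*$. Combining the two bounds, $x_\infty = x^*$.

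It remains to handle the policies. The set $\nDa$ is a finite product of the compact sets $\bar U(i)$, hence compact, so $\{\npla_t\}$ has cluster points; let $\npla_{t_k} \to \npla_\infty$. Using $x_{t_k} = T_{\npla_{t_k}} x_{t_k}$, $x_{t_k} \to x^*$, and the joint lower semicontinuity of $(x,\npla) \mapsto T_\npla x$ recorded after Assumption~\ref{assump-continuity}, I get $x^* = \liminf_k T_{\npla_{t_k}} x_{t_k} \ge T_{\npla_\infty} x^*$; on the other hand $x^* = T x^* \le T_{\npla_\infty} x^*$ by Eq.~(\ref{app-ineq-2}). Hence $x^* = T_{\npla_\infty} x^*$, so $\npla_\infty$ is essentially proper by Lemma~\ref{lma-app-properext}(i). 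Since $\npla_\infty$ is essentially proper, the reward-maximization problem faced by player II when player I fixes $\npla_\infty$ is a single-player SSP whose dynamic programming operator is $T_{\npla_\infty}$; by the result of \cite{bet-ssp91}, exactly as in the proof of Theorem~\ref{thm-opt}, $x^*$ is its optimal total-reward function, so $x(\npla_\infty, \pi_2) \le x^*$ for all $\pi_2 \in \Pi_2$, which together with the fact that $x^*$ is the value of the game shows $\sup_{\pi_2 \in \Pi_2} x(\npla_\infty, \pi_2) = x^*$, i.e., $\npla_\infty$ is optimal for player I.

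The main obstacle is this last passage from the policy-iteration recursion to a limiting policy. Monotone convergence disposes of the value sequence cleanly once part~(i) is available — which is why I would prove (i) first and reuse it twice, for $x_t \ge x^*$ and for squeezing $x_\infty \le x^*$ — but for the policies one must invoke compactness of $\nDa$ together with the one-sided (lower) semicontinuity of $T_\npla x$ in $(x,\npla)$. Since only lower semicontinuity is available, it is important that the complementary inequality $T x^* \le T_{\npla_\infty} x^*$ comes for free from $T \le T_\npla$ rather than from a limiting argument. After that, optimality of $\npla_\infty$ is a verbatim repetition of the single-player-SSP argument already used in Theorem~\ref{thm-opt}.
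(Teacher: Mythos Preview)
Your proof is correct and follows essentially the same line as the paper's. The one noteworthy variation is the lower bound: you show $x_t \ge x^*$ by iterating $x_t \ge T x_t$ and invoking part~(i), whereas the paper bounds $\{x_t\}$ below by the fixed point $\tilde x(\bar\nu)$ of $\tilde T_{\bar\nu}$ for an essentially proper $\bar\nu$ via Lemma~\ref{lma-app-properext1}(ii), and then sandwiches $x_t \ge T x_t \ge x_{t+1}$ to get $x_\infty = T x_\infty$ directly and invokes uniqueness; your route is slightly more self-contained (it reuses part~(i) instead of Lemma~\ref{lma-app-properext1}), but the cluster-point argument and the appeal to lower semicontinuity and Theorem~\ref{thm-opt} for optimality of $\npla_\infty$ are the same in substance.
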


\begin{proof}
We prove the last statement in (ii); the other statements are already proved in the preceding discussion.
We have the relation
$$ x_t \geq T x_t = T_{\npla_{t+1}} x_t   \geq x_{t+1}.$$
(To see this, note that since $T_{\npla_t} x_t \geq T x_t$ [cf.\ Eq.~(\ref{app-ineq-2})], we have $x_t \geq T x_t = T_{\npla_{t+1}} x_t$. Using the monotonicity of $T_{\npla_{t+1}}$, the fact that $\npla_{t+1}$ is essentially proper, and Lemma~\ref{lma-app-properext0}, we then obtain $T_{\npla_{t+1}} x_t \geq x_{t+1}$.)
Hence the sequence $\{ x_t\}$ is non-increasing. 
Since all $\npla_t$ are essentially proper, 
by Lemma~\ref{lma-app-properext1}(ii), $\{x_t\}$ is bounded below by $\tilde x \in \rn$,  the unique fixed point of $\tilde T_{\nbplb}$, where $\nbplb$ is any policy of player II that is essentially proper. (By Lemma~\ref{lma-app-properext} such a policy $\nbplb$ exists under Assumption~\ref{assum-app-sspgame}.) 
Therefore, $\{x_t\}$ converges to some $x_\infty \in \rn$.
Using the relation $x_t \geq T x_t \geq x_{t+1}$ and the continuity of $T$, we obtain that $x_\infty \geq T x_\infty \geq x_\infty$, i.e., $x_\infty = T x_\infty$. Since $x^*$ is the unique fixed point of $T$ (Theorem~\ref{thm-opt}), we have $x_\infty = x^*$. 

Let $\npla_\infty$ be a cluster point of $\{\npla_t\}$. Since every component of $T_{\npla} x$ is a lower semicontinuous function of $(\npla, x)$ under Assumption~\ref{assump-continuity}, we obtain from the relation $x_t \geq T_{\npla_{t+1}} x_t $ and the convergence of $\{x_t\}$ to $x^*$ that $x^* \geq T_{\npla_\infty} x^*$. By Lemma~\ref{lma-app-properext}(i), this implies that $\npla_\infty$ is essentially proper.
We also have, by Eq.~(\ref{app-ineq-2}), that $T_{\npla_\infty} x^* \geq T x^* = x^*$. Hence $T_{\npla_\infty} x^* = T x^*$ and by Theorem~\ref{thm-opt}, $\npla_\infty$ is an optimal policy for player I.
\end{proof}
\smallskip

\subsection{Further Remarks} \label{sec-ssp-remarks}

The results we presented in this section bear close relations to those given in the earlier work \cite{sspgame_pt99} on SSP games.
In what follows we make a detailed comparison of our model assumption with the formulation in \cite{sspgame_pt99}, and we also discuss the scope and limitation of our model through a well-known example.

Patek and Bertsekas \cite{sspgame_pt99} formulated an SSP game model and derived optimality results similar to ours.
The model conditions of \cite[Assumption SSP]{sspgame_pt99} are stated in terms of deterministic Markov policies $\pi_1, \pi_2$ (instead of stationary policies) of the two players: 
 \moveup\moveup
\begin{itemize}
\item[(i)] There exists a policy $\pi_1$ of player I such that for all policies of player II, the termination state is reached w.p.$1$ for all initial states.\moveup
\item[(ii)] For every pair of policies $(\pi_1, \pi_2)$ that is prolonging, the expected total cost of player I is infinite for at least one initial state $i$, i.e., $x_i(\pi_1, \pi_2) = + \infty$.\moveup\moveup
\end{itemize}
Instead of the essentially proper policies as we have defined, the well-behaved policies in their framework, which they call proper policies, are the ones for player I under which the game terminates no matter how player II plays. Under the above assumptions and continuity and regularity conditions, they obtained optimality results similar to Theorems~\ref{thm-opt} and \ref{thm-opt-vipi} for their model. Their results ensure that player I has an optimal stationary proper policy and policy iteration for player I converges when starting from a proper policy. By contrast, our model formulation is centered on essentially proper policies, under which the game need not terminate for all policies of the other player, and our results ensure the existence of optimal stationary policies for both players within the class of essentially proper policies, as well as the convergence of policy iteration starting with such a policy.

Let us discuss more about the above model assumptions (i)-(ii) considered by \cite{sspgame_pt99}.
Because of the non-stationarity of policy $\pi_1$, it is not immediate to see what implication assumption~(i) has on the structure of the game. However, based on the results and analyses of~\cite{sspgame_pt99}, when assumption (ii) and other continuity/regularity conditions are in force, 
assumption (i) is equivalent to:
\begin{itemize}
\item[(i')] There exists a policy $\nbpla \in \nDa$ of player I such that for  all  $\nplb \in \nDb$ of player II, $(\nbpla, \nplb)$ is non-prolonging.\moveup\moveup
\end{itemize}
Assumptions (i') and (ii) clearly imply our Assumption~\ref{assum-app-sspgame}, so our SSP game model covers a larger class of games.
Assumptions (i)-(ii) or (i')-(ii) are also asymmetric for the two players, whereas Assumption~\ref{assum-app-sspgame} has a symmetric form.

To see why Assumption~\ref{assum-app-sspgame} characterizes a much broader class of games than the model conditions (i)-(ii) of \cite{sspgame_pt99} do, we note two main restrictions in the latter conditions.
First, assumption~(i) overly favors player I by requiring that player I can terminate the game however player II plays, whereas assumption (ii) overly favors player II by requiring that a non-terminating game is always to the disadvantage of player I. Although these conditions seem natural for those applications in which player I is a ``pursuer'' and player II an ``evader,'' and the game is over when the pursuer achieves the goal of catching the evader (see \cite[Sec.~5]{sspgame_pt99}), they are restrictive for games not of the pursuit-evasion type.
Second, by imposing the condition in assumption (ii) on \emph{every} pair of prolonging policies, assumptions (i)-(ii) effectively require 
that if the two players play only stationary policies, then against any given strategy of player II, player~I will not be able to obtain strictly negative average cost ($-\infty$ total cost), for any initial state. Consider the implication of this for a finite state and control game, for example. (In a finite state and control game, $\bar U(i)$ and $\bar V(i)$ correspond to the distributions over the finite control sets at state $i$ under randomized stationary policies, and $\pi_1$ and $\pi_2$ correspond to randomized Markov policies.) Then the requirement imposed by assumptions (i)-(ii) just mentioned entails that against any given \emph{deterministic} stationary policy of player II, player I cannot find a stationary policy to obtain an infinite amount of return by prolonging the game.
This is a serious restriction in the model formulation of~\cite{sspgame_pt99}.

To end this section, we discuss some examples of total cost zero-sum games that are excluded by our model assumptions. 
The following simple finite state and control game, due to Everett~\cite{game_everett57}, has no optimal policy for player II but has a value~\cite{KuS81}. The state space is $\Ste = \{0,1\}$. At state $1$, there are two controls $\{1,2\}$ for each player, and when player I applies control $u$ and player II control $v$, the system transitions to state $0$ with cost $1$ if $u=v$, transitions to state $0$ with cost $0$ if $u=1, v = 2$, and transitions to state $1$ with cost $0$ if $u=2, v = 1$. (Here $\bar U(1), \bar V(1)$ are given by the set of probability distributions on $\{1, 2\}$.) The value of the game for state $1$ is $1$. This example violates Assumption~\ref{assum-app-sspgame}(iii) because the pair of policies with player I applying control $2$ and player II control $1$ at state $1$, is prolonging but incurs zero total cost. 

It is worth to mention that in this example, although not every player has an optimal policy, the dynamic programming equation $x = Tx$ of the game does have a unique solution, which is the value of the game~\cite{KuS81}.  It is also easy to construct examples where the game has a value, both players have stationary optimal policies, and the dynamic programming equation has a unique solution, but there exists a prolonging pair of stationary optimal policies (with zero average cost) so that the game will be excluded by our model assumption. Here is the simplest such example: let the state and control spaces be as in the preceding example; let all one-stage costs be zero; and at state $1$, let the system transit to state $0$ if either player applies control $1$, and let the system stay at state $1$ otherwise. In this game, all policies are optimal, the value of the game is zero, and it is the unique solution of the dynamic programming equation. This illustrates that not all games with nice optimality properties are included in the class of games satisfying our model assumptions.

\section{Q-Learning for Finite-Space SSP Games} \label{sec-finitessp}  

Starting with this section, we will focus on total cost zero-sum games with a finite state and control space and analyze the convergence of a model-free, stochastic approximation-based algorithm, Q-learning, for solving these games. In this section, first, the SSP game model introduced in Section~\ref{sec-game} will be specialized to the finite-space game context, to provide a finite-space SSP game model that has desirable optimality properties for applying the Q-learning algorithm. The Q-learning algorithm will then be introduced, along with the convergence results we have obtained. The major proofs for these results will be given in the next section.

\subsection{Finite-Space SSP Games}

Consider a finite state and control two-player zero-sum game. 
The state space is $\Ste = \St \cup \{0\}$ as before, where $0$ is the cost-free termination state.
At state  $i \in \St$, each player has a finite set of feasible controls, denoted by $U(i)$, $V(i)$ for player I,  player II, respectively.
The rules of the game and the objectives of the two players are as described in Section~\ref{sec-2.1}. However, with apologies to the readers, we will use some different notation to make it conforming to standard notation in the Q-learning literature. 
In particular,
for each pair of controls 
$(u,v) \in U(i) \times V(i)$, let $p_{ij}(u,v)$ be the probability of transition from state $i$ to $j \in \Ste$, 
let $\hat g(i, u, v, j)$ be the corresponding transition cost,
\footnote{More generally, the transition cost can also depend on some additional stochastic disturbance $\omega$ and take the form $\hat g(i,u,v, j, \omega)$. Our analysis of Q-learning applies to such type of random transition costs provided that they have bounded variance, but for notational simplicity, we do not introduce them in the paper.}
and let $g(i, u,v) = \sum_{j \in \St} p_{ij}(u,v) \hat g(i, u, v, j)$ denote the expected one-stage cost at state $i$ with controls $(u,v)$.
At each time $t$, every player may use the information of the current state $i_t$ and the history of the game, including the past states $\{i_k, k < t\}$ and past controls $\{u_k, v_k, k < t\}$ of both players, to decide which control to apply.
When player I adopts policy $\pi_1$ and player II $\pi_2$, we write the total cost of player I starting from initial state $i$ as $J(i; \pi_1, \pi_2)$, i.e.,
$$ J(i; \pi_1, \pi_2) = \liminf_{t \to \infty}  \, \E_{\pi_1 \pi_2} \Big[ \sum_{k = 0}^t \hat g(i_k, u_k, v_k, i_{k+1}) \, \big| \,  i_0 = i \, \Big].$$
(Since we will deal with asynchronous iterative algorithms in this section, we find the notation $J(i; \pi_1, \pi_2)$ more convenient than the notation $x_i(\pi_1, \pi_2)$ of Section~\ref{sec-game}, thus reserving subscripts for iteration indices.)
As before, for every state $i$, the two players' goals are: 
$$ \text{player I:} \quad  \mathop{\text{minimize}}_{\pi_1 \in \Pi_1}  \sup_{\pi_2 \in \Pi_2} J(i; \pi_1, \pi_2), \qquad \ \ \text{player II:} \quad  \mathop{\text{maximize}}_{\pi_2 \in \Pi_2} \inf_{\pi_1 \in \Pi_1} J(i; \pi_1, \pi_2).$$

Of particular importance are stationary randomized policies. For each state $i \in \St$, let $\bar U(i) = \P\big(U(i)\big)$ and $\bar V(i) = \P\big(V(i)\big)$ denote the set of probability distributions on $U(i)$ and $V(i)$, respectively, which are the randomized decision rules of the two players for state $i$.
A stationary randomized policy of a player takes the form,
\begin{align*}
  \text{for player I:} &  \quad  \pla = \{ \pla(\cdot \mid i) \mid i \in \St \}, \ \ \text{where} \ \pla(\cdot \mid i) \in \bar U(i), \\
   \text{for player II:} & \quad \plb = \{ \plb(\cdot \mid i) \mid i \in \St \}, \ \ \text{where} \ \plb(\cdot \mid i) \in \bar V(i).
\end{align*}  
With such a policy $\pla$ ($\plb$, resp.), at state $i$, player I (player II, resp.) takes control $u$ ($v$, resp.) with probability $\pla(u \mid i)$ ($\plb(v \mid i)$, resp.). We denote the set of stationary randomized policies of player I  and player II by $\Ra$ and $\Rb$, respectively.

We can relate the above finite-space game to a finite-state compact-control game considered in Section~\ref{sec-game}, 
where the compact control sets correspond to the sets of randomized decision rules of each player in the present context. 
In particular, in the framework of Section~\ref{sec-game}, consider the corresponding compact-control game where:\moveup\moveup
\begin{itemize}
\item[(a)] The compact control sets at state $i$  for the two players are given by the sets $\bar U(i), \bar V(i)$ defined above.
For a pair $(\da, \db) \in \bar U(i) \times \bar V(i)$, the probability of transition to state $j$ is given by 
$\sum_{u \in U(i)} \sum_{v \in V(i)} \da(u) \db(v) p_{ij}(u, v),$ whereas the expected one-stage cost is given by 
 $\sum_{u \in U(i)} \sum_{v \in V(i)} \da(u) \db(v) g(i, u, v).$
These transition probabilities and one-stage costs satisfy the continuity/semi-continuity conditions in Assumption~\ref{assump-continuity}.
\item[(b)] The sets $\nDa$ and $\nDb$ of stationary deterministic policies in the notation of Section~\ref{sec-game} correspond to the sets $\Ra$ and $\Rb$ of stationary randomized policies of player I and player II defined above, respectively. \moveup
\item[(c)] With the correspondences in (a)-(b), the regularity condition in Assumption~\ref{assump-regularity} is satisfied, and the dynamic programming equation, which we write as $J = T J$ here, is given by
\begin{equation} \label{eq-Bellman}
     J(i) = (T J)(i) : = \inf_{\da \in \bar U(i)} \sup_{\db \in \bar V(i)}  \sum_{u \in U(i)} \sum_{v \in V(i)} \da(u) \db(v) \Big( g(i, u, v) +  \sum_{j \in S} p_{ij}(u, v) J(j) \Big), \quad \forall \, i \in \St.
\end{equation}
The dynamic programming operator $\tilde T$ is given by exchanging the order of $\inf$ and $\sup$ in the above expression defining $T$. The dynamic programming operators $T_{\pla}$, $\tilde T_{\plb}$ for policies $\pla \in \Ra$ and $\plb \in \Rb$ are given by
\begin{align*}
 (T_{\pla} J)(i) & : = \sup_{\db \in \bar V(i)}  \sum_{u \in U(i)} \sum_{v \in V(i)} \pla(u\mid i) \, \db(v) \Big( g(i, u, v) +  \sum_{j \in S} p_{ij}(u, v) J(j) \Big), \quad \forall \, i \in \St, \\
 (T_{\plb} J)(i) & : = \inf_{\da \in \bar U(i)}  \sum_{u \in U(i)} \sum_{v \in V(i)} \da(u) \, \plb(v\mid i) \Big( g(i, u, v) +  \sum_{j \in S} p_{ij}(u, v) J(j) \Big), \quad \forall \, i \in \St.
\end{align*} 
\end{itemize} 

The SSP game model given in Assumption~\ref{assum-app-sspgame} then translates to the following model condition on finite-space games:

\smallskip
\begin{assumption}[Finite-Space SSP Game Model] \label{assum-ssp} 
Assumption~\ref{assum-app-sspgame} holds for $\nDa = \Ra$ and $\nDb = \Rb$. That is, (i) player I (player II)  has a stationary randomized policy under which the player's total cost (reward) is less than $+ \infty$ (greater than $- \infty$) no matter what stationary randomized policy the other player takes; and (ii) under any prolonging pair of stationary randomized policies of the two players, there is some initial state for which either the total cost for player I is $+\infty$ or the total reward for player II  is $-\infty$.
\end{assumption}

Under Assumption~\ref{assum-ssp}, Theorems~\ref{thm-opt} and \ref{thm-opt-vipi} apply to finite-space games through their associated compact-control games just described. In particular, we obtain from Theorem~\ref{thm-opt}:
\footnote{In translating Theorem~\ref{thm-opt} into Proposition~\ref{prp-3.1},  
there is a small technical detail that we need to mention: the policy spaces $\Pi_1, \Pi_2$ in the finite-space game are not the policy spaces in the corresponding compact-control game. Let us denote the latter sets by $\bar \Pi_1, \bar \Pi_2$ for the two players respectively. In general a history-dependent policy in $\bar \Pi_1$ or $\bar \Pi_2$ does not necessarily lie in  $\Pi_1$ or $\Pi_2$. This is because the player in the finite-space game does not observe the randomized decision rules that the other player took in the past, and therefore cannot make control decisions based on that information, whereas the player in the corresponding compact-control game can use that information for control. However, Markov policies, in particular stationary policies, for either game are also policies for the other game. We use this fact together with a standard Markovian property in MDP to obtain the desired results for the finite-space game. For example, we can prove Eq.~(\ref{eq-equipol}) as follows. By a direct application of Theorem~\ref{thm-opt} to the compact-control game, there exist $(\pla^*, \plb^*) \in \Ra \times \Rb$ with
$$ J(i; \pla^*, \bar \pi_2) \leq J(i; \pla^*, \plb^*) \leq J(i; \bar \pi_1, \plb^*), \qquad  \forall \, \bar \pi_1 \in \bar \Pi_1,  \ \bar \pi_2 \in \bar \Pi_2, \ i \in \St.$$
To obtain Eq.~(\ref{eq-equipol}) from this inequality,
consider first the total cost $J(i; \pi_1, \plb^*)$ for any given state $i$ and policy $\pi_1 \in  \Pi_1$ in the finite-space game.
Because the state evolves in a Markovian way when player II plays the stationary policy $\plb^*$, 
one can construct a randomized Markov policy $\tilde \pi_1$ such that $J(i; \tilde \pi_1, \plb^*) = J(i;  \pi_1, \plb^*)$ (such construction is well-known in the MDP theory).
Since a randomized Markov policy of player I lies in the intersection $\Pi_1 \cap \bar \Pi_1$, we have $ J(i; \pla^*, \plb^*) \leq J(i; \tilde \pi_1, \plb^*)$ by the preceding inequality, and consequently, 
$J(i; \pla^*, \plb^*) \leq J(i;  \pi_1, \plb^*)$ for any $\pi_1 \in  \Pi_1$ and $i \in \St$. This proves the second half of the desired inequality~(\ref{eq-equipol}).
The other half of~(\ref{eq-equipol}) follows from the same argument applied to player II.
}

\smallskip
\begin{proposition}[Optimality Properties of Finite-Space SSP Games] \label{prp-3.1}
For a finite-space SSP game satisfying Assumption~\ref{assum-ssp}, there exist equilibrium policies $(\pla^*, \plb^*) \in \Ra \times \Rb$ for the two players, i.e.,
\begin{equation} \label{eq-equipol}
   J(i; \pla^*, \pi_2) \leq J(i; \pla^*, \plb^*) \leq J(i; \pi_1, \plb^*), \qquad  \forall \, \pi_1 \in \Pi_1,  \ \pi_2 \in \Pi_2, \ i \in \St.
\end{equation}   
The value function of the game, given by $J^*(\cdot) = J(\cdot; \pla^*, \plb^*),$
is the unique solution of the dynamic programming equation $J = TJ$ given by (\ref{eq-Bellman}).
Moreover, any $\pla^*\in \Ra$, $\plb^* \in \Rb$ such that $T_{\pla^*} J^* = T J^*$, $\tilde T_{\plb}^* J^* = T J^*$
are optimal policies of player I and player II, respectively, and they are essentially proper, with the pair $( \pla^*, \plb^*)$ forming a non-prolonging pair of equilibrium policies.
\end{proposition}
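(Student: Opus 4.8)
The plan is to deduce Proposition~\ref{prp-3.1} directly from Theorem~\ref{thm-opt} applied to the associated compact-control game described in items (a)--(c). The first step is to verify that this associated game indeed satisfies Assumptions~\ref{assump-continuity}--\ref{assum-app-sspgame}. Assumption~\ref{assump-continuity} holds by item (a): the transition probabilities $\sum_{u,v}\da(u)\db(v)p_{ij}(u,v)$ and the one-stage costs $\sum_{u,v}\da(u)\db(v)g(i,u,v)$ are multilinear (hence continuous) in $(\da,\db)$, and $\bar U(i)=\P(U(i))$, $\bar V(i)=\P(V(i))$ are compact simplices. Assumption~\ref{assump-regularity} holds by item (c): for fixed $J$, the inner expression in (\ref{eq-Bellman}) is bilinear in $(\da,\db)$ over convex compact sets, so von Neumann's minimax theorem (the relevant case of Fan~\cite{fan53}) gives $TJ=\tilde T J$. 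Assumption~\ref{assum-app-sspgame} is exactly the hypothesis Assumption~\ref{assum-ssp}, under the identification $\nDa=\Ra$, $\nDb=\Rb$ of item (b). Thus Theorem~\ref{thm-opt} and Theorem~\ref{thm-opt-vipi} apply verbatim to the compact-control game, yielding: a finite value function $J^*$ that is the unique solution of $J=TJ$; the existence of $(\pla^*,\plb^*)\in\Ra\times\Rb$ with $J^*=T_{\pla^*}J^*=\tilde T_{\plb^*}J^*$; these policies are essentially proper; $(\pla^*,\plb^*)$ is non-prolonging; and they form an equilibrium (and optimal) pair \emph{over the compact-control policy spaces} $\bar\Pi_1,\bar\Pi_2$.

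The remaining work, and the one genuinely delicate point, is that the equilibrium inequality delivered by Theorem~\ref{thm-opt} is against all history-dependent policies $\bar\pi_1\in\bar\Pi_1$, $\bar\pi_2\in\bar\Pi_2$ of the \emph{compact-control} game, whereas (\ref{eq-equipol}) must hold against all history-dependent policies $\pi_1\in\Pi_1$, $\pi_2\in\Pi_2$ of the \emph{finite-space} game, and as noted in the footnote these two policy classes are not nested: a finite-space policy may randomize its next control without revealing the realized randomized decision rule, so it need not be representable in $\bar\Pi_1$, while a compact-control policy may condition on past realized decision rules, which a finite-space player cannot observe. The bridge is the standard Markov-reduction fact from MDP theory: when player II is frozen at the stationary policy $\plb^*$, player I faces a single-player total-cost MDP, and for any $\pi_1\in\Pi_1$ there is a randomized Markov policy $\tilde\pi_1$ with $J(i;\tilde\pi_1,\plb^*)=J(i;\pi_1,\plb^*)$ for all $i$ (the marginal state-control process, hence the expected total cost, is matched stage by stage). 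A randomized Markov policy lies in $\Pi_1\cap\bar\Pi_1$, so the compact-control inequality gives $J(i;\pla^*,\plb^*)\le J(i;\tilde\pi_1,\plb^*)=J(i;\pi_1,\plb^*)$, establishing the right half of (\ref{eq-equipol}); the symmetric argument with player I frozen at $\pla^*$ gives the left half. One subtlety worth a line: because the cost is defined with $\liminf_{t\to\infty}$, matching finite-stage expected costs stage by stage matches the liminf, so the reduction is valid for this cost criterion.

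Finally, I would assemble the conclusions. Combining the two halves yields (\ref{eq-equipol}), which shows $(\pla^*,\plb^*)$ is a pair of equilibrium policies for the finite-space game and that $J^*(\cdot)=J(\cdot;\pla^*,\plb^*)$ is the value function; uniqueness of the solution of $J=TJ$ and the characterization ``$T_{\pla^*}J^*=TJ^*$, $\tilde T_{\plb^*}J^*=TJ^*$ $\Rightarrow$ essentially proper and non-prolonging'' transfer directly from Theorem~\ref{thm-opt} since $T$, $T_{\pla}$, $\tilde T_{\plb}$ are literally the operators in (\ref{eq-Bellman}) and the displays following it. The main obstacle, then, is not any hard estimate but the careful handling of the policy-class mismatch via the Markov reduction; everything else is bookkeeping to confirm the compact-control model in (a)--(c) meets the hypotheses of the Section~\ref{sec-game} theorems.
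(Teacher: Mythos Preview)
Your proposal is correct and follows essentially the same approach as the paper: apply Theorem~\ref{thm-opt} to the associated compact-control game, then bridge the mismatch between $\Pi_1,\Pi_2$ and $\bar\Pi_1,\bar\Pi_2$ via the standard Markov-reduction (against a fixed stationary opponent, every history-dependent policy has a cost-equivalent randomized Markov policy, which lies in the intersection of the two policy classes). The paper's proof, contained in the footnote preceding Proposition~\ref{prp-3.1}, is exactly this argument; your write-up is in fact slightly more thorough in verifying Assumptions~\ref{assump-continuity}--\ref{assump-regularity} and in flagging the $\liminf$ compatibility of the stage-by-stage matching.
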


\smallskip
\begin{remark} \rm
In a sequential game, only one player can move at each time and whose turn to move depends on the current state (see e.g., \cite[Section 7.2]{BET}). Equivalently, at each state, one of the two players has a singleton control set. Then, from the definition of $T$ and $\tilde T$ [cf.\ Eq.~(\ref{eq-Bellman})] it follows that for a sequential SSP game satisfying Assumption~\ref{assum-ssp}, both players have stationary deterministic equilibriums policies. It also follows that for sequential games, we may replace the sets $\Ra$ and $\Rb$ in Assumption~\ref{assum-ssp} and Proposition~\ref{prp-3.1} by the sets of stationary deterministic policies of the two players.
\end{remark}
\smallskip

From Theorem~\ref{thm-opt-vipi} we obtain convergence of value and policy iteration for the finite-space game under Assumption~\ref{assum-ssp}. 
We will not focus on these algorithms in the rest of this paper, however. Instead, we will focus on a model-free algorithm called Q-learning, for computing the value function of the game. 
The algorithm is useful when the transition probabilities and expected one-stage costs are unknown or when the model is too complicated to have these parameters written down explicitly, but random transitions and transition costs can be observed or generated by a simulator.
The Q-learning algorithm may be viewed as a stochastic value iteration algorithm. 
Standard value iteration, however, computes $T J$ for some vector $J$ at each iteration. It would be difficult to do so in the model-free context, with only a few observations of state transitions, as can be seen from the expression of $T J$ in Eq.~({\ref{eq-Bellman}). 
The Q-learning algorithm will work not with the cost vector $J$ but with the so-called Q-factors and an associated dynamic programming equation, which is equivalent to 
the dynamic equation $J = T J$ by a change of variable (from $J$ to Q-factors). 
To prepare for the study of the Q-learning algorithm, let us explain this equation now.

\subsubsection*{Q-Factors and the Associated Dynamic Programming Equation}

Let $\Stc = \big\{ (i, u, v) \mid i \in \St,   u \in U(i), v \in V(i) \big\}$ be the state-and-control space. In the dynamic programming equation~(\ref{eq-Bellman}), which we repeat here:
$$ J(i) = (T J)(i) = \inf_{\da \in \bar U(i)} \sup_{\db \in \bar V(i)}  \sum_{u \in U(i)} \sum_{v \in V(i)} \da(u) \db(v) \Big( g(i, u, v) +  \sum_{j \in S} p_{ij}(u, v) J(j) \Big), \quad \forall \, i \in \St,$$
let us make a change of variable from $J$ to $Q = \big\{ Q(i,u,v) \mid (i,u,v) \in \Stc \big\}$ by letting
$$ Q(i,u,v) =  g(i, u, v) +  \sum_{j \in S} p_{ij}(u, v) J(j), \qquad (i,u,v) \in \Stc.$$
This gives an equation in terms $Q$: for all $(i,u,v) \in \Stc$,
\begin{align}
   Q(i,u,v) & = g(i, u, v) +  \sum_{j \in S} p_{ij}(u, v) (TJ)(j) \notag \\
   & = g(i, u, v) +  \sum_{j \in S} p_{ij}(u, v)   \inf_{\da \in \bar U(j)} \sup_{\db \in \bar V(j)}  \sum_{u' \in U(j)} \sum_{v' \in V(j)} \da(u') \db(v') Q(j, u', v'). \label{eq-Bellman-F0}
\end{align}
To simplify notation, we define the shorthand notation
\begin{equation} \label{eq-aveQ}
 \ave Q(i, \da, \db) = \sum_{u \in U(i)} \sum_{v \in V(i)} \da(u) \db(v)  Q(i, u, v)
\end{equation} 
for a given vector $Q$ and randomized decision rules $\da  \in \bar U(i)$, $\db \in \bar V(i)$ for a state $i$.
Then Eq.~(\ref{eq-Bellman-F0}) can be expressed concisely as
\begin{equation} \label{eq-Bellman-F}
 Q = F Q \quad \text{or} \quad Q(i, u, v) = (FQ)(i,u,v),    \quad \forall \,  (i,u,v) \in \Stc, 
\end{equation}
where the operator $F: \Re^{|\Stc|} \to \Re^{|\Stc|}$ is given by
\begin{equation} \label{eq-def-F}
(FQ)(i,u,v) := g(i,u,v) + \sum_{j \in S} p_{ij}(u, v)  \inf_{\da \in \bar U(j)} \sup_{\db \in \bar V(j)}  \ave Q(j, \da, \db), \qquad  (i,u,v) \in \Stc.
\end{equation}

We refer to the components of $Q$ as Q-factors. Equation $Q = F Q$ given by (\ref{eq-Bellman-F}) is the dynamic programming equation for Q-factors.
Since it is obtained from $J = T J$ by a change of a variable, any solution of $J = T J$ gives us a solution of $Q = F Q$.
Conversely, if in the equation $Q = F Q$ we change the variable $Q$ to $J$ by letting
$$ J(i)  =   \inf_{\da \in \bar U(i)} \sup_{\db \in \bar V(i)}  \ave Q(i, \da, \db), \qquad   \forall \, i \in \St,$$
then by a direct calculation, we get back the equation $J = T J$. Hence any solution of $Q = F Q$ gives us a solution of $J = T J$.
Furthermore, it can be verified using the definition of $F$ and $T$ that there is a one-to-one correspondence between the solutions of these two dynamic programming equations.
Using these facts, some optimality properties given in Prop.~\ref{prp-3.1} can be stated in terms of Q-factors as follows: 

\smallskip
\begin{cor}[Optimality Properties of Finite-Space SSP Games in terms of Q-factors] \label{cor-3.1}
For a finite-space SSP game satisfying Assumption~\ref{assum-ssp}, the dynamic programming equation~(\ref{eq-Bellman-F}) has a unique solution $Q^*$, which relates to the value function $J^*$ of the game by
\begin{align*}
   Q^*(i,u,v) & = g(i,u,v) +  \sum_{j \in S} p_{ij}(u, v) J^*(j), \qquad  \forall \, (i,u,v) \in \Stc, \\
   J^*(i)  & =   \inf_{\da \in \bar U(i)} \sup_{\db \in \bar V(i)}  \ave Q^*(i, \da, \db), \qquad \qquad \ \   \forall \, i \in \St.
\end{align*}   
Any stationary policies $\pla^* \in \Ra, \plb^* \in \Rb$ such that for every state $i$,
$$ \pla^*(\cdot \mid i) \in \argmin_{\da \in \bar U(i)} \sup_{\db \in \bar V(i)}  \ave Q^*(i, \da, \db), \qquad
\plb^*(\cdot \mid i ) \in \argmax_{\db \in \bar V(i)} \inf_{\da \in \bar U(i)}  \ave Q^*(i, \da, \db), $$
are optimal policies for the two players.
\end{cor}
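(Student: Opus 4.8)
The plan is to obtain the corollary entirely from Proposition~\ref{prp-3.1} via the change of variable relating $J = TJ$ and $Q = FQ$ described just above its statement. First I would make precise the one-to-one correspondence: the map $\Phi$ carrying $J$ to $Q$ through $Q(i,u,v) = g(i,u,v) + \sum_{j \in S} p_{ij}(u,v) J(j)$ sends solutions of $J = TJ$ to solutions of $Q = FQ$; the map $\Psi$ carrying $Q$ to $J$ through $J(i) = \inf_{\da \in \bar U(i)} \sup_{\db \in \bar V(i)} \ave Q(i,\da,\db)$ sends solutions of $Q = FQ$ to solutions of $J = TJ$; and $\Psi \circ \Phi$, $\Phi \circ \Psi$ are the identity on the respective solution sets. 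This is the verification alluded to before the corollary and uses only the definitions of $F$ and $T$. Since Proposition~\ref{prp-3.1} gives that $J = TJ$ has the unique solution $J^*$, the bijection forces $Q = FQ$ to have the unique solution $Q^* = \Phi(J^*)$, which is the first displayed identity of the corollary, and applying $\Psi$ yields the second, $J^*(i) = \inf_{\da} \sup_{\db} \ave Q^*(i,\da,\db)$.

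For the claim about optimal policies I would reduce the stated $\argmin$/$\argmax$ conditions to the fixed-point conditions appearing in Proposition~\ref{prp-3.1}. Substituting $Q^* = \Phi(J^*)$ into the formulas for $T_{\pla}$ and $\tilde T_{\plb}$ from item (c) of the preceding construction gives, for every state $i$,
$$ (T_{\pla} J^*)(i) = \sup_{\db \in \bar V(i)} \ave Q^*\big(i, \pla(\cdot \mid i), \db\big), \qquad (\tilde T_{\plb} J^*)(i) = \inf_{\da \in \bar U(i)} \ave Q^*\big(i, \da, \plb(\cdot \mid i)\big), $$
while $(TJ^*)(i) = J^*(i) = \inf_{\da} \sup_{\db} \ave Q^*(i,\da,\db)$ and, by Assumption~\ref{assump-regularity} (which holds in the finite-space setting), $(\tilde T J^*)(i) = J^*(i) = \sup_{\db} \inf_{\da} \ave Q^*(i,\da,\db)$. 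Comparing these, the condition $\pla^*(\cdot \mid i) \in \argmin_{\da} \sup_{\db} \ave Q^*(i,\da,\db)$ for all $i$ is equivalent to $T_{\pla^*} J^* = T J^*$, and $\plb^*(\cdot \mid i) \in \argmax_{\db} \inf_{\da} \ave Q^*(i,\da,\db)$ for all $i$ is equivalent to $\tilde T_{\plb^*} J^* = \tilde T J^*$. Proposition~\ref{prp-3.1} then asserts that any such $\pla^*$, $\plb^*$ are optimal for the two players; existence of such policies follows from the compactness of $\bar U(i), \bar V(i)$ and the semicontinuity in Assumption~\ref{assump-continuity}.

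I do not expect a genuine obstacle: the corollary is Proposition~\ref{prp-3.1} rephrased through the change of variable, and each step is a routine computation. The only point needing some care is the last equivalence---matching the per-state $\argmin$/$\argmax$ characterizations with the global operator identities---where one must invoke both the separable (component-wise) structure of the optimization defining $T$, $\tilde T$, $T_{\pla}$, $\tilde T_{\plb}$ and the minimax equality $T J^* = \tilde T J^*$ from Assumption~\ref{assump-regularity}, so that the same $J^*$ serves as the common value in both the $\inf\sup$ and $\sup\inf$ representations.
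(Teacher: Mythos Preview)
Your proposal is correct and follows exactly the approach the paper indicates: the paper does not give a separate proof of the corollary but simply states that it follows from Proposition~\ref{prp-3.1} via the one-to-one correspondence between solutions of $J=TJ$ and $Q=FQ$ established by the change of variable, and you have faithfully filled in those details. Your identification of $\Psi\circ\Phi = T$ and $\Phi\circ\Psi = F$ on the respective solution sets, and your reduction of the per-state $\argmin/\argmax$ conditions to $T_{\pla^*}J^* = TJ^*$ and $\tilde T_{\plb^*}J^* = \tilde T J^*$, are precisely what the paper's phrase ``it can be verified using the definition of $F$ and $T$'' is pointing to.
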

\smallskip

As Cor.~\ref{cor-3.1} shows, for an SSP game satisfying Assumption~\ref{assum-ssp}, if we know $Q^*$, we can use it to compute the value function of the game and optimal policies of the two players, by solving for each state a matrix game defined by $Q^*$:  $\mathop{\text{minimax}}_{\da \in \bar U(i), \db \in \bar V(i)} \ave Q^*(i, \da, \db)$. These matrix game problems do not involve the parameters of the SSP game, which can be unknown in the learning context. 

Corollary~\ref{cor-3.1} also shows that under Assumption~\ref{assum-ssp}, $Q^*$ is the unique fixed point of the dynamic programming operator $F$, and therefore, since $F$ is also monotone and nonexpansive with respect to the sup-norm $\| \cdot \|_\infty$ by definition, the fixed point iteration 
$Q_{t+1} = F Q_t$ converges to $Q^*$ for any initial $Q_0$.  These properties are important for applying the Q-learning algorithm to compute $Q^*$.

\subsection{Q-Learning for SSP Games} \label{sec-alg-ql}

The Q-learning algorithm is an asynchronous stochastic iterative algorithm, and as mentioned earlier, it does not require the knowledge of the model parameters such as transition probabilities and expected one-stage costs. Instead, its computation is based on random state transitions and transition costs, which may be generated by a simulator or observed in a real learning environment.

We consider using Q-learning to compute the function $Q^*$ for a finite-space SSP game satisfying Assumption~\ref{assum-ssp}. 
Intuitively, one may view the algorithm as a stochastic version of damped fixed point iterations with the mapping $F$, i.e., iterations of the form $(1 - \g) Q + \g F Q$ for some stepsize parameter $\g$.
\footnote{The behavior of the Q-learning algorithm in practice is, however, much more complex than suggested by this simple view (in the context of MDP, its behavior can sometimes resemble policy iteration, for example). Such complexity can be attributed in part to various coordination schemes one can use with asynchronous and distributed computation.
This subject is beyond the scope of this paper, however.} 
The algorithm generates iteratively a sequence of Q-factor vectors, $\{Q_t\}$. Our main result is a proof that this sequence converges to $Q^*$ w.p.$1$ in a fairly general totally asynchronous computation setting.

To describe the algorithm, first recall that 
$$(FQ)(i,u,v) = g(i,u,v) + \sum_{j \in S} p_{ij}(u, v)  \inf_{\da \in \bar U(j)} \sup_{\db \in \bar V(j)}  \ave Q(j, \da, \db)$$
[cf.\ Eqs.~(\ref{eq-def-F}), (\ref{eq-aveQ})], so a damped fixed point iteration $Q_{t+1} = (1 - \g) Q_t + \g F Q_t$ will set the $(i,u,v)$-th component of $Q_{t+1}$ to be
$$ Q_{t+1}(i,u,v) = ( 1 - \g) Q_t(i,u,v) + \g \Big( g(i,u,v) + \sum_{j \in S} p_{ij}(u, v)  \inf_{\da \in \bar U(j)} \sup_{\db \in \bar V(j)}  {\ave Q}_t(j, \da, \db) \Big).$$
The Q-learning algorithm we describe next differs from the above iteration in several ways:\moveup\moveup
\begin{itemize}
\item[(i)] It is an asynchronous algorithm. At each iteration, it updates only a chosen subset of Q-factor components, keeping the rest unchanged.\moveup
\item[(ii)] Its computation can be distributed among multiple processors. Each Q-factor component can be updated by a separate processor, for example, and
communication delays are taken into account by allowing a processor to use outdated information in computation. 
In particular, for updating the $(i,u,v)$-th component at iteration $t$, the algorithm can use the Q-factor component $Q_{\tau}(j, \tilde u, \tilde v)$ computed at some iteration $\tau \leq t$, where $\tau$ can depend on both $(i,u,v)$ and $(j, \tilde u, \tilde v)$, reflecting the communication delay between the two associated processors.
In the algorithm, we will write these $\tau$ variables as $\tau_{\ell \tilde \ell}(t)$, for every pair of state-control triplets $\ell=(i,u,v), \tilde \ell=(j, \tilde u, \tilde v) \in \Stc$.
For each $\ell=(i,u,v) \in \Stc$, we will use the shorthand notation $Q_t^{(\ell)}$ to denote the Q-factor vector whose $(j, \tilde u, \tilde v)$-th component is given by:
\begin{equation} \label{eq-ql-notation1}
  Q_t^{(\ell)}(j, \tilde u, \tilde v) =  Q_{\tau_{\ell \tilde \ell}(t)}(j, \tilde u, \tilde v) \qquad \text{with} \ \ \tilde \ell = (j, \tilde u, \tilde v) \in \Stc. 
\end{equation}  
For $\ell = (i,u,v)$, we can view $Q_t^{(\ell)}$ as the ``local information'' that the $\ell$th processor uses for updating $Q_{t+1}(i,u,v)$.\moveup 
\item[(iii)] It is a model-free, stochastic approximation-based algorithm. Compared with the damped fixed point iteration $Q_{t+1}(i,u,v) = ( 1 - \g) Q_t(i,u,v) + \g (FQ_t^{(\ell)})(i,u,v)$ using possibly ``outdated'' information as just discussed,
the Q-learning iterate for $Q_{t+1}(i,u,v)$ uses, in place of $(FQ_t^{(\ell)})(i,u,v)$, an unbiased estimate of $(FQ_t^{(\ell)})(i,u,v)$ obtained through sampling state transitions randomly.
\end{itemize}

Let us describe now the Q-learning algorithm. The algorithm generates recursively a sequence $\{Q_t\}$ of Q-factor vectors.
At each iteration, it generates random state transitions, and the termination state $0$ and the zero total cost at that state appear explicitly in the calculation. 
For notational convenience, let us define for state $0$, the dummy control sets $U(0) = V(0) = \{0\}$ with $\bar U(0) = \bar V(0) = \P(\{0\})$,
and treat Q-factors as $(|\Stc|+1)$-dimensional vectors with $Q(0,0,0) = 0$. It will be taken for granted that $Q_t(0,0,0)=0$ for all $t$ and the variables $\tau_{\ell \tilde \ell}(\tilde \ell)$ for communications delays between $\ell \in \Stc$ and $\tilde \ell = (0,0,0)$ are (arbitrarily) defined. 
Given $\{Q_\tau, \tau \leq t\}$, the $t$th iteration of the algorithm computes $Q_{t+1}$ as follows. 

\medskip
\noindent {\bf Q-Learning Algorithm ($t$th iteration)} \hfill \vspace*{0.1cm}\\
\noindent For each state-control triplet $\ell = (i,u,v) \in \Stc$:\moveup\moveup
\begin{itemize}
\item[(a)] Let $\g_{t, \ell} \in [0,1]$ be a stepsize parameter. For each $\tilde \ell \in \Stc$, let $\tau_{\ell \tilde \ell}(t) \leq t$ be a nonnegative integer.\moveup
  \item[(b)] Generate a random transition from state $i$ with control $(u,v)$, and denote the successor state by $j_t^{\ell}$ (here $j_t^{\ell} \in \Ste$).
With $s$ being a shorthand for the state $j_t^{\ell}$, let
\begin{equation} \label{eq-Q}
Q_{t+1}(i,u,v) = ( 1 - \g_{t, \ell} ) Q_t(i,u,v) + \g_{t, \ell} \Big( \hat g(i, u, v, s) + \inf_{\da \in \bar U(s)} \sup_{\db \in \bar V(s)}  {\ave Q}^{(\ell)}_t(s, \da, \db) \Big).
\end{equation}
Here for $s \not = 0$ (i.e., $s$ is not the termination state), ${\ave Q}^{(\ell)}_t(s, \da, \db)$ is a shorthand notation for the weighted average of Q-factors,
$$ \sum_{\tilde u \in U(s)} \sum_{\tilde v \in V(s)} \da(\tilde u) \db(\tilde v) Q^{(\ell)}_t(s, \tilde u, \tilde v),$$
with $Q^{(\ell)}_t$ being the Q-factor vector given by Eq.~(\ref{eq-ql-notation1}). 
For $s = 0$, ${\ave Q}^{(\ell)}_t(s, \da, \db) = 0$ [which is also consistent with the preceding expression when we extend the definition in Eq.~(\ref{eq-ql-notation1}) to include $\tilde \ell = (0,0,0)$]. 
\end{itemize}
We note that the stepsize variables specify implicitly the subset of Q-factor components to be updated at iteration $t$. If $\g_{t,\ell}=0$, then $Q_{t+1}(\ell) = Q_t(\ell)$ and no computation is actually needed to carry out step (b).  The components with positive stepsizes, $\{\ell \in \Stc \mid \g_{t,\ell} > 0 \}$, are those for which the corresponding Q-factors are selected for an update.

The variables appearing in the Q-learning algorithm will be regarded as random variables on a common probability space $(\Omega, \F, \mbf{P})$.  
We require them to satisfy the following standard conditions for asynchronous Q-learning (cf.\ \cite{tsi94}). 
(In fact, without these conditions, the algorithm as just described is imprecise.)
Let $\{\mathcal{F}_t\}$ be an increasing sequence of sub-$\sigma$-fields of $\mathcal{F}$. (They represent the histories of the algorithm up to certain times.)

\begin{assumption}[Algorithmic Conditions] \label{assump-qlalg} \hfill
\moveup\moveup
\begin{itemize}
\item[(i)] $Q_0$ is $\mathcal{F}_0$-measurable.\moveup 
\item[(ii)] For every $\ell, \tilde \ell \in \Stc$ and $t \geq  0$, $\gamma_{t, \ell}$ and $\tau_{\ell \tilde \ell}(t)$ are $\mathcal{F}_t$-measurable. \moveup
\item[(iii)] For every $\ell = (i,u,v)  \in \Stc$ and $t \geq  0$, $j^{\ell}_t$ is $\mathcal{F}_{t+1}$-measurable and 
\begin{equation} \label{eq-cond-j}
 \mbf{P}(j^{\ell}_t = j \mid \F_t ) = p_{ij}(u,v), \qquad j \in \Ste. 
\end{equation} 
\item[(iv)] With probability $1$, 
\begin{equation} \label{eq-cond-delay}
   \lim_{t \to \infty} \tau_{\ell \tilde \ell}(t) = \infty,\qquad \forall \, \ell, \tilde \ell \in \Stc.
\end{equation}\moveup
\item[(v)] With probability $1$,
\begin{equation} \label{eq-cond-stepsize}
   \sum_{t \geq 0} \g_{t, \ell} = \infty, \qquad \sum_{t \geq 0} \g_{t, \ell}^2 < \infty, \qquad \forall \, \ell \in \Stc.
\end{equation}
\end{itemize}
\end{assumption}

Conditions (i)-(iii) are on the probabilistic dependence relations between the variables. They are naturally satisfied by the Q-learning algorithm in practice, when at each iteration, the values of stepsizes and communication delays are chosen before the random successor states are generated.
Condition (iv) is on the variables related to communication delays: it ensures that outdated information will eventually be purged by the algorithm, so it is a minimal requirement for totally asynchronous computation.
Condition (v) is a standard stepsize condition. 
It implies that every Q-factor component is updated infinitely often, which is certainly indispensable for the Q-learning algorithm to find $Q^*$ in the limit.

We have the following results regarding the convergence of the Q-learning algorithm given above.

\smallskip
\begin{theorem}[Boundedness of Q-Learning Iterates] \label{thm-boundql}
Consider a finite-space SSP game satisfying Assumption~\ref{assum-ssp}. Then under Assumption~\ref{assump-qlalg}(i)-(iii) and (v), for any given initial $Q_0$, the sequence $\{Q_t\}$ generated by the Q-learning algorithm (\ref{eq-Q}) is bounded w.p.$1$.
\end{theorem}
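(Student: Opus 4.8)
The plan is to adapt the single-player SSP Q-learning boundedness analysis of \cite{yub_bd11} to the minimax setting, using the essentially proper policies $\nbpla\in\Ra$ of player~I and $\nbplb\in\Rb$ of player~II (which exist under Assumption~\ref{assum-ssp} by Lemma~\ref{lma-app-properext}) as the stabilizing objects that play the role of the proper policy of a single-player SSP. First I would recast the Q-learning recursion~(\ref{eq-Q}) in standard stochastic-approximation form
$$ Q_{t+1}(\ell) = (1-\g_{t,\ell})\,Q_t(\ell) + \g_{t,\ell}\big(\,(F Q_t^{(\ell)})(\ell) + w_{t+1}(\ell)\,\big), \qquad \ell=(i,u,v)\in\Stc, $$
where $w_{t+1}(\ell) = \hat g(i,u,v,j_t^\ell) + \inf_{\da}\sup_{\db}{\ave Q}^{(\ell)}_t(j_t^\ell,\da,\db) - (F Q_t^{(\ell)})(\ell)$ is, by Assumption~\ref{assump-qlalg}(ii)--(iii), a martingale difference with conditional second moment bounded by $A\big(1+\|Q_t^{(\ell)}\|_\infty^2\big)$ for a constant $A$; this bound uses finiteness of the state and control spaces, boundedness of $\hat g$, and the fact that $|\inf_\da\sup_\db{\ave Q}(s,\da,\db)|\le\|Q\|_\infty$. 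By Corollary~\ref{cor-3.1}, $F$ is monotone, sup-norm nonexpansive, and has the unique fixed point $Q^*$; hence, once $\{Q_t\}$ is shown bounded w.p.$1$, the convergence $Q_t\to Q^*$ (Theorem~\ref{thm-convql}) follows from the asynchronous stochastic-approximation theorem for monotone nonexpansive mappings, \cite[Theorem~2]{tsi94}. So the entire burden is the boundedness claim.

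Second, I would separate the noise: put $Q_t = \bar Q_t + W_t$ with $W_0=0$ and $W_{t+1}(\ell)=(1-\g_{t,\ell})W_t(\ell)+\g_{t,\ell}w_{t+1}(\ell)$, so that $\bar Q_{t+1}(\ell)=(1-\g_{t,\ell})\bar Q_t(\ell)+\g_{t,\ell}(F Q_t^{(\ell)})(\ell)$. Using the stepsize condition~(\ref{eq-cond-stepsize}), the variance bound above, and~(\ref{eq-cond-delay}) to control the delayed vectors $Q_t^{(\ell)}$ (each component of which equals some $Q_\tau$ with $\tau\le t$, so $\|Q_t^{(\ell)}\|_\infty\le\max_{\tau\le t}\|Q_\tau\|_\infty$), a standard martingale-convergence argument gives $\|W_t\|_\infty/\big(1+\max_{\tau\le t}\|Q_\tau\|_\infty\big)\to 0$ w.p.$1$. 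Thus on the event where $\{Q_t\}$ is unbounded, $\bar Q_t$ is asymptotically indistinguishable in scale from $Q_t$, and, $F$ being nonexpansive (so that $W_t^{(\ell)}$ is absorbed), $\bar Q_t$ approximately obeys the \emph{deterministic} totally asynchronous iteration driven by $F$.

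Third, the core step is to show $\sup_t\|\bar Q_t\|_\infty<\infty$ w.p.$1$. Arguing by contradiction on a positive-probability event where $\sup_t\|\bar Q_t\|_\infty=\infty$, I would run a scaling argument along a sequence of record levels $G_k\uparrow\infty$, normalizing $\bar Q_t$ by its size at the first time it reaches level $G_k$. Since $\hat g$ is bounded while $G_k\to\infty$, and since $\da,\db\mapsto{\ave Q}(\cdot,\da,\db)$ is positively homogeneous of degree one, the normalized iterates are driven in the limit by the \emph{recession operator} $F_0$ of $F$, obtained by dropping the cost term $g$ in~(\ref{eq-def-F}); $F_0$ is monotone and sup-norm nonexpansive with $F_0 0=0$. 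One then wants the normalized iterates to remain uniformly below the next level, contradicting that this level is reached. This is exactly where the essentially proper policies enter: one sandwiches $F$ (hence $F_0$) between the Q-factor operators obtained by freezing player~I to $\nbpla$ with player~II optimizing, and by freezing player~II to $\nbplb$ with player~I optimizing; because $\nbpla,\nbplb$ are essentially proper, each of these is the Q-operator of a single-player SSP problem satisfying the SSP Model Assumption, whose deterministic iteration is stable and whose Q-learning iterates are bounded w.p.$1$ by the single-player analysis of \cite{yub_bd11}. Propagating these two-sided bounds through the asynchronous recursion by monotonicity, run on the \emph{same} realized transitions $j_t^\ell$, stepsizes $\g_{t,\ell}$, and delays $\tau_{\ell\tilde\ell}(t)$, pins $\{\bar Q_t\}$ and therefore $\{Q_t\}$ between bounded sequences, which gives the contradiction and finishes the proof.

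I expect the last step to be the main obstacle. In contrast with discounted games (where $F$ is a contraction, \cite{littman96}) or with always-terminating SSP games (\cite{sspgame_pt99}), the recession operator $F_0$ of the minimax map $F$ is only nonexpansive, \emph{not} a contraction, so there is no contraction factor driving the normalized iterates down; the stabilization must be extracted from the essentially proper policies together with the one-stage costs themselves, which, although negligible relative to the diverging scale, are precisely what prevents drift along a recession direction. Carrying this through totally asynchronous updates with communication delays, and dealing with the $Q$-dependent part of the sampling noise, is the delicate core inherited from \cite{yub_bd11}. A minor technical point is that freezing a \emph{randomized} policy $\nbpla\in\Ra$ of player~I yields, on the state--control space $\Stc$, a redundantly parametrized operator rather than literally a finite-control single-player SSP; this is handled by recasting the frozen game as a genuine SSP over the states $\St$ and the other player's (finite) control sets before invoking the single-player result.
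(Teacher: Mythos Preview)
Your core insight---sandwiching the game iterates between two ``frozen'' iterations obtained by fixing one player to an essentially proper policy, and then appealing to the single-player SSP analysis of \cite{yub_bd11} for each side---is exactly what the paper does. In fact the paper dispenses entirely with your steps~2--3: no noise separation $Q_t=\bar Q_t+W_t$, no scaling along record levels, and no recession operator $F_0$ appear anywhere in the proof. The sandwich works \emph{directly} on the raw iterates: with $\hat Q_0=Q_0$ and $\hat Q_{t+1}$ defined by replacing $\sup_{\db}$ in~(\ref{eq-Q}) by the fixed $\db=\bplb_s$, a one-line induction using $\g_{t,\ell}\in[0,1]$ and $\sup_\db\ge(\cdot)|_{\db=\bplb_s}$ gives $Q_t\ge\hat Q_t$ for all $t$ (Lemma~\ref{lma-bdbl-Q}), with no need to first isolate the martingale noise or pass to a normalized limit. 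Your scaling framework is not just superfluous; in the totally asynchronous setting it is also on shaky ground, since the O.D.E.-type arguments that make recession analysis work typically need the comparable-update-frequency condition the paper explicitly disclaims in Remark~3.3.

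The place where you underestimate the work is precisely the step you call a ``minor technical point.'' The frozen iteration $\{\hat Q_t\}$ is \emph{not} the classical Q-learning iteration for any single-player SSP to which \cite{yub_bd11} applies verbatim: the components are indexed by $(i,u,v)\in\Stc$, and the minimization over $\tilde u\in U(s)$ at the successor state is performed \emph{after} averaging over $\tilde v$ with weights $\bplb_s(\tilde v)$---a structure absent from the standard algorithm. Recasting as an SSP over $\St$ with player~I's controls does not yield the same iterates on $\Stc$. The paper therefore does not ``invoke'' \cite{yub_bd11} as a black box; it re-runs the entire auxiliary-sequence construction of that paper for this specific iteration (Sections~\ref{sec-4.2.2}--\ref{sec-4.2.6} and Appendix~A): build $\{\tilde Q_t\}$ with a carefully chosen sample-path-dependent $t_0$ and $\tilde Q_{t_0}$, show each $\tilde Q_t(\ell)$ equals the total cost of a randomized Markov policy in a time-inhomogeneous SSP whose parameters lie in a $\delta$-neighborhood of the single-player problem $\text{SSP}(\bplb)$, and then bound those costs uniformly via a compact-control perturbation argument. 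That adaptation, not the sandwich, is the bulk of the proof. Also note that the theorem requires only Assumption~\ref{assump-qlalg}(i)--(iii),(v); your appeal to the delay condition~(\ref{eq-cond-delay}) in step~2 is unnecessary for boundedness (it is used only for convergence).
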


\smallskip
\begin{theorem}[Convergence of Q-Learning] \label{thm-convql}
Consider a finite-space SSP game satisfying Assumption~\ref{assum-ssp}. Then under Assumption~\ref{assump-qlalg}, for any given initial $Q_0$, the sequence $\{Q_t\}$ generated by the Q-learning algorithm (\ref{eq-Q}) converges w.p.$1$ to the unique solution $Q^*$ of the equation $Q = F Q$.
\end{theorem}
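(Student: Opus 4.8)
The plan is to deduce Theorem~\ref{thm-convql} from the boundedness result Theorem~\ref{thm-boundql} together with the classical convergence theorem for totally asynchronous stochastic approximation driven by monotone, sup-norm nonexpansive mappings, namely \cite[Theorem~2]{tsi94} (see also \cite[Chapter~4]{BET}). First I would recast the Q-learning recursion~(\ref{eq-Q}) in the standard stochastic-approximation template. For each $\ell = (i,u,v) \in \Stc$,
\[
 Q_{t+1}(\ell) = (1 - \g_{t,\ell})\, Q_t(\ell) + \g_{t,\ell}\big[ (F Q_t^{(\ell)})(\ell) + w_t(\ell) \big],
\]
where $F$ is the operator defined in~(\ref{eq-def-F}), $Q_t^{(\ell)}$ is the vector of possibly outdated components described in~(\ref{eq-ql-notation1}), and
\[
 w_t(\ell) = \hat g(i,u,v, j_t^{\ell}) + \inf_{\da \in \bar U(j_t^{\ell})} \sup_{\db \in \bar V(j_t^{\ell})} {\ave Q}^{(\ell)}_t(j_t^{\ell}, \da, \db) - (F Q_t^{(\ell)})(\ell)
\]
is the sampling noise (with the last two terms understood to vanish when $j_t^{\ell}=0$).

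Next I would verify, one by one, the hypotheses of \cite[Theorem~2]{tsi94}. By Corollary~\ref{cor-3.1}, under Assumption~\ref{assum-ssp} the operator $F$ has a unique fixed point $Q^*$; and $F$ is monotone and nonexpansive with respect to $\|\cdot\|_\infty$ directly from its definition, since the inner $\inf$--$\sup$ over the probability simplices $\bar U(j), \bar V(j)$ is a monotone, sup-norm nonexpansive average of the $Q$-components. The stepsize condition~(\ref{eq-cond-stepsize}) and the delay (``total asynchronism'') condition~(\ref{eq-cond-delay}) are exactly Assumption~\ref{assump-qlalg}(v) and~(iv). For the noise, $\E[w_t(\ell) \mid \F_t] = 0$ for all $\ell, t$: by Assumption~\ref{assump-qlalg}(ii) the quantities $Q_t^{(\ell)}$ and $\g_{t,\ell}$ are $\F_t$-measurable, and by~(\ref{eq-cond-j}) the conditional expectation over the random successor state $j_t^{\ell}$ of the bracketed term in~(\ref{eq-Q}) is precisely $(F Q_t^{(\ell)})(\ell)$. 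Moreover, because the state and control sets are finite (so the transition costs $\hat g$ are bounded) and because $Q \mapsto \inf_{\da}\sup_{\db}{\ave Q}(\cdot,\da,\db)$ is $\|\cdot\|_\infty$-nonexpansive, the conditional second moment satisfies the required linear-growth bound $\E[w_t(\ell)^2 \mid \F_t] \le A + B \max_{\tau \le t}\|Q_\tau\|_\infty^2$ for suitable deterministic constants $A, B$.

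The last remaining hypothesis of \cite[Theorem~2]{tsi94} is that the iterates $\{Q_t\}$ be bounded w.p.$1$, and this is supplied exactly by Theorem~\ref{thm-boundql}, which holds under Assumption~\ref{assump-qlalg}(i)--(iii) and~(v) --- all implied by the full Assumption~\ref{assump-qlalg} --- for SSP games satisfying Assumption~\ref{assum-ssp}. With every hypothesis now in place, \cite[Theorem~2]{tsi94} yields $Q_t \to Q^*$ w.p.$1$, which is the claim of Theorem~\ref{thm-convql}. I expect no genuine obstacle at this stage: the entire substantive difficulty is concentrated in establishing the boundedness of the iterates (Theorem~\ref{thm-boundql}), and once that is granted the proof here reduces to checking that the Q-learning recursion fits the template of the cited asynchronous stochastic-approximation theorem.
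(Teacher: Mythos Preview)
Your proposal is correct and follows essentially the same route as the paper: recast~(\ref{eq-Q}) in the stochastic-approximation form $Q_{t+1}(\ell) = (1-\g_{t,\ell})Q_t(\ell) + \g_{t,\ell}\big[(FQ_t^{(\ell)})(\ell) + w_{t,\ell}\big]$, verify the zero-mean and quadratic-growth conditions on the noise, invoke monotonicity, nonexpansiveness, and uniqueness of the fixed point of $F$, and then apply \cite[Theorem~2]{tsi94} together with the boundedness supplied by Theorem~\ref{thm-boundql}. The paper's proof does exactly this.
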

\smallskip

Theorem~\ref{thm-boundql} on the boundedness of $\{Q_t\}$ is our main result. Its proof will be the subject of the next section. 
Assuming it has been proved, the convergence of Q-learning stated in Theorem~\ref{thm-convql} follows by combining the boundedness result with a convergence theorem of Tsitsiklis \cite{tsi94}. We give this proof below.

\begin{proof}[Proof of Theorem~\ref{thm-convql}]
To analyze the convergence of the Q-learning iterates $Q_t$, we write them in a form that is standard for stochastic approximation-based analysis. 
For every $\ell = (i,u,v) \in \Stc$ and every $t \geq 0$, we express the iteration (\ref{eq-Q}) equivalently as
\begin{equation} \label{eq-equivQ}
   Q_{t+1}(i,u,v) = ( 1 - \g_{t, \ell} ) Q_t(i,u,v) + \g_{t, \ell} \big( F Q^{(\ell)}_t \big)(i,u,v) + \g_{t, \ell} \, w_{t, \ell}, 
\end{equation}
where $w_{t,\ell}$ is a noise term given by
$$ w_{t, \ell} = \hat g(i, u, v, s) + \inf_{\da \in \bar U(s)} \sup_{\db \in \bar V(s)}  {\ave Q}^{(\ell)}_t(s, \da, \db) - \big( F Q^{(\ell)}_t \big)(i,u,v), $$
and $s$ is a shorthand notation for the random successor state $j^{\ell}_t$.
Using Eq.~(\ref{eq-cond-j}) and the definition of the mapping $F$ [cf.\ Eq.~(\ref{eq-def-F})], direct calculation shows that the noise terms in the iteration (\ref{eq-equivQ}) satisfy that for every $\ell \in \Stc$ and $t \geq  0$,
$$   \E \big[ w_{t,\ell} \mid \mathcal{F}_t \big]  = 0, \qquad \text{w.p.$1$}, $$
and there exist deterministic constants $A$ and $B$, independent of $\ell$ and $t$, such that 
$$  \E \big[ w_{t,\ell}^2 \mid \mathcal{F}_t \big]  \leq A + B \max_{\ell' \in \Stc} \max_{\tau \leq t}  | Q_{\tau}(\ell') |^2, \qquad \text{w.p.$1$}. $$
Then, since under Assumption~\ref{assum-ssp}, $F$ has a unique fixed point $Q^*$ and is monotone and nonexpansive with respect to $\| \cdot \|_\infty$, a convergence theorem of Tsitsiklis \cite[Theorem 2]{tsi94} applies
and shows that $\{Q_t\}$ converges to $Q^*$ w.p.$1$, provided that $\{Q_t\}$ is bounded w.p.$1$. 
The desired convergence result then follows from Theorem~\ref{thm-boundql}.
\end{proof}

\begin{remark} \rm
We have set the stepsizes $\g_{t,\ell} \leq 1$ in this paper. Theorems~\ref{thm-boundql}, \ref{thm-convql} actually hold without this restriction, but in order to handle the general case of positive, possibly unbounded stepsizes, additional technical arguments are needed in the proofs, and such arguments can be found in the papers~\cite{yub_bd11,yb_ssp11}. To avoid the technical complication and repetition, in this paper we choose not to focus on general stepsizes.
\end{remark}

\begin{remark} \rm
As mentioned in the introduction section, using the O.D.E.-based analysis, Abounadi, Bertsekas and Borkar~\cite{abb02} established convergence for a class of asynchronous stochastic approximation algorithms involving nonexpansive mappings, and their results can be applied to the Q-learning algorithm for SSP games we consider. However, their asynchronous computation framework differs from the totally asynchronous computation framework we consider here.
A chief assumption in their framework is that all the components are updated comparatively often in the sense that $\liminf_{t \to \infty} m(t, \ell)/t > 0$ for all components $\ell$, where $m(t,\ell)$ is the number of times the $\ell$-th component has been updated up to time $t$.  (See also the related asynchronous schemes and their analyses in \cite{borkar-asyn,Borkar-Meyn-asyn}, \cite[Chap.\ 7]{borkar-sabook}.) If this and some other conditions on the stepsizes and communication delays are assumed to hold, the convergence result of~\cite{abb02} when applied in our context would lead to the conclusion that $Q_t$ tracks the scaled O.D.E.\ $\dot{Q} = \tfrac{1}{|\Stc|} (FQ - Q)$. By comparison, the totally asynchronous Q-learning algorithm considered here is generally not to be expected to have such kind of behavior, since it does not restrict how often a component should be selected for update. Correspondingly, the boundedness and convergence analyses of the algorithm for the totally asynchronous case also differ significantly from the O.D.E.-based analyses in the aforementioned works. 
\end{remark}

\section{Boundedness of Q-Learning Iterates} \label{sec-bd}

In this section we prove Theorem~\ref{thm-boundql} on the boundedness of Q-learning iterates for a finite-space SSP game satisfying Assumption~\ref{assum-ssp}.
The proof is long and uses a line of analysis devised earlier for bounding Q-learning iterates in single-player SSP problems (Yu and Bertsekas~\cite{yub_bd11}). 
After the proof of Theorem~\ref{thm-boundql}, which takes up Section~\ref{sec-bd-gen}, we include in Section~\ref{sec-bd-special} a short boundedness proof for a special case where the assumption on the game model is more restrictive than Assumption~\ref{assum-ssp} and the boundedness analysis is based on a contraction argument. 

\subsection{Boundedness Analysis for the General Case} \label{sec-bd-gen}

In this subsection, we prove the boundedness of Q-learning iterates stated in Theorem~\ref{thm-boundql}.
Assumption~\ref{assum-ssp} implies that there exist a policy $\bpla \in \Ra$ of player I and a policy $\bplb \in \Rb$ of player II that are essentially proper (Lemma~\ref{lma-app-properext}).
We will prove the lower boundedness of $\{Q_t\}$ by using the essential properness property of the policy $\bplb$ and by using the implications of this property on the single-player SSP problem for player I when player II plays the policy $\bplb$.
Due to symmetry, the same proof will also establish that $\{Q_t\}$ is bounded above w.p.$1$, by applying an identical argument to $\{ - Q_t\}$ and using the essential properness property of the policy $\bpla$.   

The proof consists of several steps, given in separate subsections. 
The main idea of the proof, reflected in the titles of these subsections, can be outlined as follows:\moveup
\begin{enumerate}
\item We relate $\{Q_t\}$ to a sequence $\{\hat Q_t\}$ of iterates that resembles Q-learning in the single-player SSP problem associated with the policy $\bplb$. We show that lower boundedness of $\{\hat Q_t\}$ implies lower boundedness of $\{Q_t\}$. (See Section \ref{sec-4.2.1}.)\moveup
\item  For any given positive scalar $\delta$, we construct an auxiliary sequence $\{\tilde Q_t\}$ such that (i) it is lower bounded w.p.$1$ if and only if $\{\hat Q_t\}$ is lower bounded w.p.$1$, and (ii) each component of $\tilde Q_t$ can be interpreted as the total cost of some policy in a time-inhomogeneous SSP problem in the ``$\delta$-neighborhood'' of the single-player SSP problem associated with the policy $\bplb$. (See Sections \ref{sec-4.2.2}-\ref{sec-4.2.5}.) These are the key steps of our proof.\moveup
\item We show that when $\delta$ is sufficiently small, the optimal total costs of all the single-player SSP problems in the aforementioned ``$\delta$-neighborhood'' can be bounded uniformly from below, and hence the auxiliary sequence $\{\tilde Q_t\}$ is bounded below w.p.$1$. (See Section~\ref{sec-4.2.6}.) This leads to the desired conclusion that $\{\hat Q_t\}$ and hence $\{Q_t\}$ are bounded below w.p.1, completing the proof.\moveup
\end{enumerate}
The auxiliary sequence-based arguments we use in this proof are first used in the boundedness analysis of Q-learning for single-player SSP problems~\cite{yub_bd11}. 

\subsubsection{Relate $\{Q_t\}$ to Q-learning type iterations in a single-player SSP problem} \label{sec-4.2.1}

To facilitate the analysis, 
we first reduce the question of lower boundedness of $\{Q_t\}$ to the question of lower boundedness of another process $\{{\hat Q}_t\}$, 
 which is defined on the same probability space as $\{Q_t\}$. 
The advantage of working with $\{\hat Q_t\}$ is that we can relate it to Q-learning like iterations for a single-player SSP that satisfies the SSP Model Assumption.

Let $\bplb \in \Rb$ be an essentially proper policy of player II; the existence of such a policy is ensured by Lemma~\ref{lma-app-properext} under Assumption~\ref{assum-ssp}.
To simplify notation, denote $\bplb_i = \bplb(\cdot \mid i)$ for every $i \in \Ste$ [note $\bplb_i \in \P(V(i))$]. 
We define an iteration similar to the Q-learning iteration~(\ref{eq-Q}), using the \emph{same random variables} (i.e., $\g_{t,\ell}$, $j_t^{\ell}$ and $\tau_{\ell \tilde \ell}(t)$, $\ell, \tilde \ell \in \Stc$) that appear in the Q-learning iteration~(\ref{eq-Q}). 
In particular, let $\hat Q_0 = Q_0$ and for $t \geq 0$ and for every $\ell = (i,u,v) \in \Stc$, let
\begin{equation} \label{eq-bdbl-Q}
{\hat Q}_{t+1}(i,u,v) = ( 1 - \g_{t, \ell} ) {\hat Q}_t(i,u,v) + \g_{t, \ell} \Big( \hat g(i, u, v, s) +  \inf_{\da \in \bar U(s)}  {\ave {\hat Q}}^{(\ell)}_t (s, \da, \bplb_s ) \Big),
\end{equation}
where 
$s$ is a shorthand for the successor state $j^{\ell}_t$, and the expression ${\ave {\hat Q}}^{(\ell)}_t (s, \da, \bplb_s )$ 
denotes a weighted average of Q-factors given by
$$ {\ave {\hat Q}}^{(\ell)}_t (s, \da, \bplb_s ) = \sum_{\tilde u \in U(s)} \sum_{\tilde v \in V(s)} \da(\tilde u) \bplb_s(\tilde v) {{\hat Q}}^{(\ell)}_t(s, \tilde u, \tilde v)$$
with ${{\hat Q}}^{(\ell)}_t$ being the vector whose $\tilde \ell$th component for $\tilde \ell \in \Stc \cup \{(0,0,0)\}$ is given by ${{\hat Q}}^{(\ell)}_t(\tilde \ell) = {\hat Q}_{\tau_{\ell \tilde \ell}(t)}(\tilde \ell)$, similar to the definition of $Q^{(\ell)}_t$ given by Eq.~(\ref{eq-ql-notation1}). (By default ${\hat Q}_\tau(0,0,0) = 0$ for all $\tau$.)
The iteration (\ref{eq-bdbl-Q}) differs from the Q-learning iteration~(\ref{eq-Q}) in that instead of maximizing over $\db \in \bar V(s)$, we fix $\db$ at $\bplb_s$.

\smallskip
\begin{lemma} \label{lma-bdbl-Q}
If $\{{\hat Q}_t\}$ is bounded below w.p.$1$, so is $\{Q_t\}$.
\end{lemma}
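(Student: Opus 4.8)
The plan is to compare the two iterations (\ref{eq-Q}) and (\ref{eq-bdbl-Q}) componentwise and argue by induction on $t$ that $\hat Q_t \le Q_t$ componentwise, for every sample point. The point is that both recursions are driven by the \emph{same} random variables $\g_{t,\ell}$, $j_t^\ell$, and $\tau_{\ell\tilde\ell}(t)$; the only difference is that (\ref{eq-Q}) performs $\sup_{\db\in\bar V(s)}$ over player II's randomized decision rules, while (\ref{eq-bdbl-Q}) substitutes the fixed rule $\bplb_s$. Hence, whenever one feeds the same Q-vector into both update maps, the iterate produced by (\ref{eq-Q}) dominates the one produced by (\ref{eq-bdbl-Q}). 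Combined with the nonnegativity of the ``carried-over'' coefficient $1-\g_{t,\ell}\in[0,1]$ and the monotonicity of the averaging operator $\ave Q(i,\da,\db)$ in $Q$, this lets the inductive inequality propagate.

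In more detail, the key steps are as follows. First, fix a sample point $\omega$ and assume inductively that $\hat Q_\tau(\tilde\ell)\le Q_\tau(\tilde\ell)$ for all $\tilde\ell$ and all $\tau\le t$; the base case $t=0$ holds because $\hat Q_0=Q_0$ (and $\hat Q_\tau(0,0,0)=Q_\tau(0,0,0)=0$ always). Since $\tau_{\ell\tilde\ell}(t)\le t$, the delayed vectors satisfy $\hat Q_t^{(\ell)}\le Q_t^{(\ell)}$ componentwise. For $s=j_t^\ell=0$ both update formulas coincide (the averaged Q-term is $0$), so $\hat Q_{t+1}(\ell)=(1-\g_{t,\ell})\hat Q_t(\ell)+\g_{t,\ell}\hat g(i,u,v,0)$ and similarly for $Q$, and the inequality is immediate from the inductive hypothesis on $\hat Q_t(\ell)\le Q_t(\ell)$. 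For $s\ne 0$, because $\da\mapsto \ave Q(s,\da,\db)$ is a convex combination with nonnegative weights, $Q\mapsto \ave Q(s,\da,\db)$ is monotone, so $\hat Q_t^{(\ell)}\le Q_t^{(\ell)}$ gives $\inf_{\da}\ave{\hat Q}_t^{(\ell)}(s,\da,\bplb_s)\le \inf_{\da}\ave Q_t^{(\ell)}(s,\da,\bplb_s)\le \sup_{\db}\inf_{\da}\ave Q_t^{(\ell)}(s,\da,\db) \le \sup_{\db}\inf_{\da}\ave Q_t^{(\ell)}(s,\da,\db)$; and since fixing $\db=\bplb_s$ can only lower the value relative to $\sup_{\db\in\bar V(s)}$, we get $\inf_{\da}\ave{\hat Q}_t^{(\ell)}(s,\da,\bplb_s)\le \inf_{\da}\sup_{\db}\ave Q_t^{(\ell)}(s,\da,\db)$, wait—more directly, $\inf_{\da}\ave{\hat Q}_t^{(\ell)}(s,\da,\bplb_s)\le \inf_{\da}\sup_{\db\in\bar V(s)}\ave Q_t^{(\ell)}(s,\da,\db)$ follows by combining monotonicity in $Q$ with the elementary bound $\ave Q(s,\da,\bplb_s)\le\sup_{\db}\ave Q(s,\da,\db)$ uniformly in $\da$. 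Plugging this and $\hat g(i,u,v,s)=\hat g(i,u,v,s)$ into the two recursions, and using $1-\g_{t,\ell}\ge 0$ together with $\hat Q_t(\ell)\le Q_t(\ell)$, yields $\hat Q_{t+1}(\ell)\le Q_{t+1}(\ell)$, completing the induction.

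Having established $\hat Q_t\le Q_t$ for all $t$ on a set of probability one (indeed surely, once the algorithmic variables are fixed), the lemma follows immediately: if $\{\hat Q_t\}$ is bounded below w.p.$1$, say $\hat Q_t(\ell)\ge -M(\omega)$ eventually for all $\ell$, then $Q_t(\ell)\ge \hat Q_t(\ell)\ge -M(\omega)$, so $\{Q_t\}$ is bounded below w.p.$1$ as well.

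I do not expect any genuine obstacle here — the result is a clean domination/monotonicity argument. The only mild subtlety to be careful about is the role of the communication delays: one must verify that the same $\tau_{\ell\tilde\ell}(t)$ are used in both iterations (which is built into the construction of (\ref{eq-bdbl-Q})) so that the delayed vectors inherit the componentwise inequality, and one must handle the dummy component $(0,0,0)$ and the terminal-successor case $s=0$ separately, as above. Everything else is a routine induction.
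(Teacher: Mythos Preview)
Your proposal is correct and follows essentially the same approach as the paper: both argue by induction on $t$ that $\hat Q_t \le Q_t$ componentwise, using that the two recursions share the same random variables, that $\gamma_{t,\ell}\in[0,1]$, and that fixing $\sigma=\bplb_s$ in place of $\sup_{\sigma}$ can only decrease the minimax term (combined with monotonicity of $\ave Q$ in $Q$ to pass the inductive inequality through the delayed vectors $Q_t^{(\ell)}$). The paper's writeup is just a bit more compact, collapsing your two-step bound into a single chain of inequalities.
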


\begin{proof}
We show by induction that $Q_{t} \geq {\hat Q}_{t}$ for all $t$. For $t = 0$, this holds since $\hat Q_0 = Q_0$ by definition. 
Suppose that for some $t \geq 0$, the desired relation holds for all $\tau \leq t$.
Then, for every $\ell=(i,u,v) \in \Stc$,  using Eqs.~(\ref{eq-Q}), (\ref{eq-bdbl-Q}), the induction hypothesis, and the fact that $\g_{t,\ell} \in [0,1]$, we have that
\begin{align*}
   Q_{t+1}(i,u,v) & \geq ( 1 - \g_{t, \ell} )  Q_t(i,u,v)  + \g_{t, \ell} \Big( \hat g(i, u, v, s) + \inf_{\da \in \bar U(s)}  {\ave Q}^{(\ell)}_t (s, \da, \bplb_s ) \Big) \\
      & \geq ( 1 - \g_{t, \ell} )  {\hat Q}_t(i,u,v)  + \g_{t, \ell} \Big( \hat g(i, u, v, s) +\inf_{\da \in \bar U(s)}  {\ave {\hat Q}}^{(\ell)}_t (s, \da, \bplb_s )   \Big) 
      = {\hat Q}_{t+1}(i,u,v),
 \end{align*}
where $s$ is a shorthand for the successor state $j^{\ell}_t$.
This completes the induction and establishes that $Q_{t} \geq {\hat Q}_{t}$ for all $t$. Hence $\{Q_t\}$ is bounded below w.p.$1$ if $\{{\hat Q}_t\}$ is so.
\end{proof}
\smallskip

By the preceding lemma, in order to establish the lower boundedness of the Q-learning iterates $\{Q_t\}$, it is sufficient to prove that the sequence $\{\hat Q_t\}$ defined above is bounded below w.p.$1$. 
The iterates $\{\hat Q_t\}$ are similar to Q-learning iterates in an MDP. Our goal now is to make this connection more precise so that we can apply the results or proof techniques developed for analyzing Q-learning in single-player problems to bound $\{\hat Q_t\}$ from below.
To this end, let us examine the single-player problem faced by player I when player II plays the essentially proper policy $\bplb$.
We will call this single-player SSP problem \sspa. For later use, we will augment its state space to include the set $R$ also. Here is the precise definition of \sspa.

\smallskip
\begin{definition} \label{def-sspa}
\sspa\ denotes the following single-player SSP problem: 
 \moveup\moveup
\begin{enumerate}
\item[(1)] The state space is $\Ste \cup \Stc$, with state $0$ being a cost-free termination state.\moveup\moveup
\item[(2)] From a state $\ell = (i,u,v) \in \Stc$, the system transitions to a state $j \in \Ste$. The transition is uncontrolled and occurs with probability $p_{ij}(u,v)$, and the expected one-stage cost is $g(i,u,v)$. \moveup\moveup
\item[(3)] For a state $i \in \St$, the control set is $U(i)$, and for each $u \in U(i)$, the system transitions to a state $j \in \Ste$ with probability
\begin{equation} \label{eq-sspa-transprob}
  p_{{\bplb},ij}(u) = \sum_{v \in V(i)} \bplb(v \mid i) \, p_{ij}(u,v),
\end{equation}  
and the expected one-stage cost is
\begin{equation} \label{eq-sspa-transcost} 
 g_{\bplb}(i, u) =  \sum_{v \in V(i)} \bplb(v \mid i) \, g(i,u,v).
\end{equation} 
\end{enumerate}
\end{definition}
\smallskip

Because $\bplb$ is an essentially proper policy of the SSP game (cf.\ Definition~\ref{def-essproper-sspgame}), we can show that the single-player problem \sspa\ satisfies the single-player SSP Model Assumption (cf.\ the discussion preceding Definition~\ref{def-essproper-sspgame}). 
Let $\SD$ ($\SR$) denote the set of stationary deterministic (randomized) policies in \sspa.

\smallskip
\begin{lemma} \label{lma-sspa}
\sspa\ satisfies the SSP Model Assumption; that is,  there exists a proper policy in $\SD$, and
every improper policy in $\SD$ incurs infinite cost for at least one initial state.
(Here proper and improper policies are as defined in Definition~\ref{def-proper-ssp} for a single-player SSP problem.)
\end{lemma}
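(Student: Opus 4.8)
The plan is to show that \sspa\ inherits its SSP structure from the essential properness of $\bplb$ in the game, by translating between policies of the game and policies of the single-player problem. First I would set up the correspondence: a stationary deterministic policy in $\SD$ chooses, for each state $i\in\St$, a control $\mu(i)\in U(i)$ (the transitions out of the states in $\Stc$ are uncontrolled, so a policy in $\SD$ is effectively a map $\St\to\cup_i U(i)$), and this is exactly a policy $\npla\in\Da=\Ra$ in the game. Under this identification, a run of \sspa\ started from a state in $\St$ is, after the first step, the same Markov chain on $\Ste$ as the one induced in the game by the pair $(\npla,\bplb)$ — only with the extra ``bookkeeping'' states in $\Stc$ inserted between visits to $\St$. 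Hence a policy in $\SD$ is proper in \sspa\ iff the pair $(\npla,\bplb)$ is non-prolonging in the game, and the total cost incurred by $\mu$ from an initial state agrees (up to the finitely-bounded first-step contributions from $\Stc$ states, which cannot affect finiteness) with $x_i(\npla,\bplb)$.

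With this dictionary in hand, the two required properties follow directly from Definition~\ref{def-essproper-sspgame}(b). Since $\bplb$ is essentially proper, there exists $\npla\in\Ra$ with $(\npla,\bplb)$ non-prolonging; the corresponding policy in $\SD$ is then proper, giving the first half. For the second half, let $\mu\in\SD$ be improper; then the corresponding $(\npla,\bplb)$ is prolonging, so by Definition~\ref{def-essproper-sspgame}(b) we have $x_i(\npla,\bplb)=+\infty$ for at least one initial state $i\in\St$, and therefore $\mu$ incurs infinite cost from the corresponding initial state in \sspa. (I should also note that from a state $\ell\in\Stc$ the one-stage cost $g(i,u,v)$ is finite and the chain enters $\St$ in one step, so this does not affect the argument; and from $\ell$ the policy is irrelevant anyway since those transitions are uncontrolled.) One small point to handle cleanly is that the total-cost criterion in Definition~\ref{def-proper-ssp} is the $\liminf$ of finite-horizon expected costs, the same as in the game, so the infinite-cost conclusion transfers verbatim.

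I would also remark, for completeness, that one should invoke the footnote following Assumption~\ref{assum-app-sspgame} (or Lemma~\ref{lma-app-properext1}) to the effect that for a stationary pair $(\npla,\bplb)$ the finite-horizon costs actually converge to a well-defined limit in $[-\infty,+\infty]$, so that ``$x_i(\npla,\bplb)=+\infty$'' is unambiguous and matches the cost of $\mu$ in \sspa; since $\bplb$ is essentially proper, a prolonging $(\npla,\bplb)$ cannot give $-\infty$ at any state (Lemma~\ref{lma-app-properext1}(i)'s proof shows this), so the only way the pair can be ``bad'' is through a $+\infty$ cost, which is precisely what we need.

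The main obstacle is not conceptual but a matter of making the state-space augmentation rigorous: one must verify carefully that inserting the $\Stc$ states between consecutive $\St$ states preserves the reach-$0$-with-probability-one property and the value of the total cost. The cleanest way is to observe that the chain on $\Ste\cup\Stc$ induced by $\mu$ reaches $0$ w.p.$1$ from every initial state iff its restriction (by time-change, collapsing each $\Stc$-state onto its predecessor in $\St$) reaches $0$ w.p.$1$, and that the collapsed chain is exactly the $(\npla,\bplb)$-chain of the game; the costs accumulated at the collapsed steps are the $g(i,u,v)=g_\npla(i)$ terms, matching $c(\npla,\bplb)$. Once this identification is stated, both claims of the lemma are immediate from the definition of essential properness.
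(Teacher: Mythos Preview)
Your dictionary between policies in $\SD$ and policies in $\Ra$ is not a bijection, and this creates a genuine gap in the existence half of the argument. A stationary deterministic policy $\mu$ in \sspa\ picks, at each state $i\in\St$, a single control $\mu(i)\in U(i)$; under the identification of Section~3, this corresponds to a \emph{Dirac} element of $\Ra=\nDa$, not to an arbitrary one. Conversely, a general $\npla\in\Ra$ is a randomized rule $\npla(\cdot\mid i)\in\bar U(i)=\mathcal{P}(U(i))$, and corresponds to a policy in $\SR$, not in $\SD$. So when you invoke Definition~\ref{def-essproper-sspgame}(b) to produce $\npla\in\Ra$ with $(\npla,\bplb)$ non-prolonging, what you obtain is a proper policy in $\SR$; it need not lie in $\SD$, and you have not shown that a proper \emph{deterministic} policy exists. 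The SSP Model Assumption, as stated in the paper and in the lemma, is formulated for the class $\SD$, so this step is required.

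This is exactly the point the paper isolates and proves separately: after reducing to the states in $\Ste$ and observing (as you do) that essential properness of $\bplb$ yields a proper policy in $\SR$ and infinite cost for every improper policy in $\SR$, the paper argues that existence of a proper randomized stationary policy forces existence of a proper deterministic one. The argument uses limiting average state--action frequencies: for an initial distribution $\alpha$ with $\alpha(i)>0$ for all $i\in\St$, the frequency vector of a proper $\npla\in\SR$ is the Dirac mass at the terminal pair $(0,0)$; since this vector lies in the convex hull of the frequency vectors of policies in $\SD$ (Puterman, Theorem~8.9.3) and is an extreme point of that hull, it must equal the frequency vector of some $\npla_{\mathrm{det}}\in\SD$, which is therefore proper. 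Your treatment of the ``improper $\Rightarrow$ infinite cost'' direction and of the $\Stc$-augmentation is fine; the missing ingredient is this passage from $\SR$ to $\SD$ for the existence part.
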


\begin{proof}
The system dynamics of \sspa\ described in Definition~\ref{def-sspa}(2)-(3) shows that to prove the lemma, it suffices to consider only those states in $\Ste$ and prove that \sspa\ restricted to $\Ste$ satisfies the SSP model Assumption. 
Let us simply call this restricted problem \sspa\ in the proof below. 
Since $\bplb$ is an essentially proper policy of a finite-space SSP game that satisfies Assumption~\ref{assum-ssp}, by Definition~\ref{def-essproper-sspgame}(b), \sspa\ has the following properties: there exists a proper policy in $\SR$, and every improper policy in $\SR$ incurs infinite cost for at least one initial state.
Hence, to prove the lemma, we need to show that \sspa\ has a proper policy in $\SD$. 

We claim that if there exists a proper policy in $\SR$, then there must exist a proper policy in $\SD$. This follows from the relation between the limiting average state-action frequency of a stationary randomized policy and the set of the limiting average state-action frequencies of all stationary deterministic policies, in a finite-space MDP. (For the definition of these limiting frequencies, see \cite[Section 8.9.1]{puterman94}.) In particular, consider any initial state distribution $\alpha$ such that $\alpha(i) > 0$ for all $i \in \St$. Let $\pla \in \SR$ be a proper policy. Let $y_{\pla,\alpha} = \{ y_{\pla,\alpha}(i,u) \mid i \in \Ste,  u \in U(i) \}$ denote the limiting average state-action frequency of $\pla$ for the initial state distribution $\alpha$ (here the control set for the termination state is set to be $U(0) = \{0\}$.) By \cite[Theorem 8.9.3, p.~400]{puterman94}, $y_{\pla,\alpha}$ lies in the convex hull of the limiting average state-action frequencies of stationary deterministic policies for the initial distribution $\alpha$. 
Since $\pla$ is proper, the termination state $0$ is reached w.p.$1$ for all initial states in $\St$, and consequently, $y_{\pla,\alpha}$ is the vector with $y_{\pla,\alpha}(0,0) = 1$ for the termination state $i=0$ and with $y_{\pla,\alpha}(i,u) = 0$ for $(i,u) \not= (0,0)$. This vector must be an extreme point of the convex hull just mentioned (which is a subset of probability distributions on $\{(i,u) \mid i \in \Ste, u \in U(i)\}$). Therefore, there exists some $\pla_{\text{det}} \in \SD$ whose limiting average state-action frequency for the initial distribution $\alpha$ equals $1$ at $(i,u) = (0,0)$. 
Since $\alpha(i) > 0$ for all $i \in \St$, this implies that the termination state $0$ is reached w.p.$1$ for all initial states in $\St$ under the deterministic policy $\pla_{\text{det}}$. Hence $\pla_{\text{det}}$ is a proper policy in $\SD$. This proves our claim. 
\end{proof}

For an SSP satisfying the SSP Model Assumption, the classical Q-learning algorithm generates a sequence of iterates that is bounded w.p.$1$, as proved by Yu and Bertsekas~\cite{yub_bd11}. 
The iterates $\{\hat Q_t\}$ defined by Eq.~(\ref{eq-bdbl-Q}) are similar to the classical Q-learning iterates, except for a small difference: in iteration~(\ref{eq-bdbl-Q}), the minimization over the controls at the successor state is done after taking weighted averages of Q-factors (weighted according to $\bplb$), whereas there is no such averaging in classical Q-learning.
This difference is mostly algebraic, however. 
Our subsequent proof of the lower boundedness of $\{\hat Q_t\}$ follows essentially the lower boundedness proof  given in \cite[Section 3.3]{yub_bd11} for classical Q-learning.

\subsubsection{Auxiliary sequence $\{\tilde Q_t\}$} \label{sec-4.2.2}

We proceed to prove that $\{\hat Q_t\}$ given by iteration~(\ref{eq-bdbl-Q}) is bounded below w.p.$1$ for any given initial $\hat Q_0$.
We will do so by introducing yet another process $\{\tilde Q_t \}$ on the same probability space. The construction of this new process will be the key to our proof.

To this end, let us replace the inf operation in iteration~(\ref{eq-bdbl-Q}) and
write iteration~(\ref{eq-bdbl-Q}) equivalently as follows. For every $\ell=(i,u,v) \in \Stc$ and $t \geq 0$,
\begin{equation} \label{eq-bdbl-Q1}
{\hat Q}_{t+1}(i,u,v) = ( 1 - \g_{t, \ell} ) {\hat Q}_t(i,u,v) + \g_{t, \ell} \Big( \hat g(i, u, v, s) +   {\ave {\hat Q}}^{(\ell)}_t(s, u^{\ell}_t, \bplb_s) \Big),
\end{equation}
where $s$ is a shorthand for the successor state $j^{\ell}_t$, and $u^\ell_t$ is a control such that
$$ u^{\ell}_t  \in \argmin_{ \tilde u \in U(s)}  {\ave {\hat Q}}^{(\ell)}_t(s, \tilde u, \bplb_s),$$
where the expression ${\ave {\hat Q}}^{(\ell)}_t(s, \tilde u, \bplb_s)$ for $\tilde u \in U(s)$ denotes the weighted average of the Q-factors:
$$ {\ave {\hat Q}}^{(\ell)}_t(s, \tilde u, \bplb_s)  = \sum_{\tilde v \in V(s)} \bplb_s(\tilde v) \, {\hat Q}^{(\ell)}_t(s, \tilde u, \tilde v). $$

Now consider an auxiliary sequence $\{\tilde Q_t\}$ of the following form.  \emph{Given} some integer $t_0$ and Q-factor vector $\tilde Q_{t_0}$, let
\begin{equation} \label{eq-tQ-0}
  \tilde Q_t = \tilde Q_{t_0}, \qquad t \leq t_0,
\end{equation}  
and let $\tilde Q_{t+1}$, $t \geq t_0$, be defined by the recursion:  for every $(i,u,v) \in \Stc$, 
\begin{equation} \label{eq-tQ}
 \tilde Q_{t+1}(i,u,v) = ( 1 - \g_{t, \ell} ) {\tilde Q}_t(i,u,v) + \g_{t, \ell} \Big( \hat g(i, u, v, s) +   {\ave {\tilde Q}}^{(\ell)}_t(s, u_t^{\ell}, \bplb_s) \Big),
\end{equation} 
where $s$ is a shorthand for the successor state $j^{\ell}_t$, and the expression
${\ave {\tilde Q}}^{(\ell)}_t(s, u_t^\ell , \bplb_s)$ is a shorthand for a weighted average of Q-factors, defined similarly to the notation ${\ave {\hat Q}}^{(\ell)}_t(s, \tilde u, \bplb_s)$ above:
$$ {\ave {\tilde Q}}^{(\ell)}_t(s, u_t^{\ell}, \bplb_s)  = \sum_{\tilde v \in V(s)} \bplb_s(\tilde v) \, {\tilde Q}^{(\ell)}_t(s, u_t^{\ell}, \tilde v),  $$
with ${{\tilde Q}}^{(\ell)}_t$ representing the vector of Q-factors whose components are given by 
${{\tilde Q}}^{(\ell)}_t(\tilde \ell) = {\tilde Q}_{\tau_{\ell \tilde \ell}(t)}(\tilde \ell), \tilde \ell \in \Stc \cup \{(0,0,0)\}$ 
[by default $\tilde Q_t(0,0,0)= {{\tilde Q}}^{(\ell)}_t(0,0,0) = 0$ for all $t$]. 
Most importantly, the variables $\g_{t,\ell}$, $j_t^{\ell}, u_t^\ell$, and $\tau_{\ell \tilde \ell}(t)$ where $\ell, \tilde \ell \in \Stc, t \geq 0$, in the definition (\ref{eq-tQ}) for $\{\tilde Q_{t}\}$ are the same random variables that appear in the iteration~(\ref{eq-bdbl-Q1}) that defines $\{\hat Q_t\}$. 

\smallskip
\begin{lemma} \label{lma-bdbl-hQ}
Consider any sample path. 
Then for any values of $t_0$ and $\tilde Q_{t_0}$, $\{\hat Q_t\}$ is bounded below if and only if $\{\tilde Q_t\}$ given by Eqs.~(\ref{eq-tQ-0})-(\ref{eq-tQ}) is bounded below. 
\end{lemma}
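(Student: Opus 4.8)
The plan is to exploit the fact that $\{\hat Q_t\}$ and $\{\tilde Q_t\}$ obey the \emph{same} affine recursion with the \emph{same} sample-path realizations of $\g_{t,\ell}$, $j_t^{\ell}$, $u_t^{\ell}$ and $\tau_{\ell\tilde\ell}(t)$; only the initial data (and the starting time $t_0$) differ. First I would observe that since $u_t^{\ell}$ is held fixed in both iterations~(\ref{eq-bdbl-Q1}) and~(\ref{eq-tQ}) — it is a random variable carried over, not re-optimized — each of these recursions is \emph{affine} (indeed, a nonnegative-coefficient convex combination) in the Q-factor variables. Consequently the map from an initial vector to the iterate at time $t$ is affine, and the difference sequence $e_t := \hat Q_t - \tilde Q_t$ satisfies the associated \emph{homogeneous} linear recursion: for $t \geq t_0$ and $\ell=(i,u,v)$,
\[
  e_{t+1}(i,u,v) = (1-\g_{t,\ell})\, e_t(i,u,v) + \g_{t,\ell}\, \ave{e}^{(\ell)}_t(s,u_t^{\ell},\bplb_s),
\]
with $\ave{e}^{(\ell)}_t(s,u_t^{\ell},\bplb_s) = \sum_{\tilde v\in V(s)}\bplb_s(\tilde v)\, e_{\tau_{\ell(s,u_t^{\ell},\tilde v)}(t)}(s,u_t^{\ell},\tilde v)$, and $e_t = \hat Q_{t_0} - \tilde Q_{t_0}$ for $t\le t_0$ (using $e_t(0,0,0)=0$ throughout).

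Next I would establish that $\{e_t\}$ is \emph{bounded} on every sample path (in fact bounded in sup-norm by $\|\hat Q_{t_0} - \tilde Q_{t_0}\|_\infty$). The right-hand side above expresses $e_{t+1}(\ell)$ as a convex combination, with weights $1-\g_{t,\ell}\in[0,1]$ and $\g_{t,\ell}\bplb_s(\tilde v)\ge 0$ summing to $1$, of values $e_\tau(\ell')$ with $\tau\le t$. Hence, letting $M_t := \max_{\tau\le t}\max_{\ell'}|e_\tau(\ell')|$, a straightforward induction gives $M_{t+1}\le M_t$, so $\|e_t\|_\infty \le M_{t_0} = \|\hat Q_{t_0}-\tilde Q_{t_0}\|_\infty$ for all $t$. (This is exactly the standard sup-norm nonexpansiveness argument for asynchronous fixed-point iterations, restricted here to the homogeneous part; it does not require the stepsize conditions of Assumption~\ref{assump-qlalg}(iv)-(v), only $\g_{t,\ell}\in[0,1]$, which holds by construction.) With $\{e_t\}$ bounded, the identity $\hat Q_t = \tilde Q_t + e_t$ immediately yields the equivalence: if $\{\tilde Q_t\}$ is bounded below then $\hat Q_t \ge \tilde Q_t - \|e_t\|_\infty \ge \tilde Q_t - C$ is bounded below, and symmetrically, swapping the roles of $\hat Q$ and $\tilde Q$ (the difference $\tilde Q_t - \hat Q_t$ obeys the same homogeneous recursion, now with initial gap taken at $t_0$ while $\hat Q_{t_0}$ is determined by the run of~(\ref{eq-bdbl-Q1}) from time $0$), if $\{\hat Q_t\}$ is bounded below so is $\{\tilde Q_t\}$.

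The main obstacle, such as it is, is bookkeeping rather than mathematics: one must be careful that the ``same random variables'' statement is used correctly, i.e.\ that $u_t^{\ell}$ — which is \emph{defined} as an argmin of a $\hat Q$-dependent quantity in~(\ref{eq-bdbl-Q1}) — is treated in~(\ref{eq-tQ}) purely as an exogenous index, so that~(\ref{eq-tQ}) really is affine in the $\tilde Q$-variables and the subtraction producing the homogeneous recursion for $e_t$ is legitimate. I would also note the mild asymmetry in the ``if and only if'': in one direction the two sequences are compared from a common time $t_0$ with a common (arbitrary) value, and in the other the comparison implicitly uses that $\hat Q$ evolves by the same rule for $t\ge t_0$ as well, so the difference sequence is still governed by the homogeneous recursion from $t_0$ onward regardless of what $\hat Q_{t_0}$ happens to be. Once the affine/homogeneous structure is set up, the proof is two short inductions and the triangle inequality; no stepsize or delay conditions are needed for this particular lemma, which is why the statement is phrased sample-path-wise and for arbitrary $t_0, \tilde Q_{t_0}$.
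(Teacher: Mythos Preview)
Your proposal is correct and follows essentially the same approach as the paper: both subtract the two recursions, note that the cost term cancels and $u_t^{\ell}$ is exogenous in the $\tilde Q$-iteration, so the difference $e_t=\hat Q_t-\tilde Q_t$ satisfies a homogeneous convex-combination recursion, whence $\max_{\tau\le t}\|e_\tau\|_\infty$ is nonincreasing for $t\ge t_0$. One small bookkeeping slip: because the delays $\tau_{\ell\tilde\ell}(t)$ may reach back before $t_0$ and $\hat Q_\tau$ is not constant for $\tau\le t_0$, the correct bound is $M_{t_0}=\max_{\tau\le t_0}\|\hat Q_\tau-\tilde Q_{t_0}\|_\infty$, not $\|\hat Q_{t_0}-\tilde Q_{t_0}\|_\infty$; this is still finite on each sample path, so the conclusion is unaffected.
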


\begin{proof}
For every $\ell=(i,u,v) \in \Stc$ and $t \geq t_0$, using Eqs.~(\ref{eq-bdbl-Q1}), (\ref{eq-tQ}), and the fact that $\g_{t, \ell} \in [0,1]$, we have that 
\begin{align*}
 \big| {\hat Q}_{t+1}(i,u,v) - \tilde Q_{t+1}(i,u,v) \big|  & \leq ( 1 - \g_{t, \ell} ) \big| {\hat Q}_{t}(i,u,v) - \tilde Q_{t}(i,u,v) \big|  \\
 & \quad \ + 
   \g_{t, \ell} \sum_{\tilde v \in V(s)} \bplb_s(\tilde v)  \big|  {\hat Q}^{(\ell)}_t(s, u_t^{\ell}, \tilde v) - {\tilde Q}^{(\ell)}_t (s, u_t^{\ell}, \tilde v) \big| \\   
& \leq \max_{\tau \leq t} \| \hat Q_\tau - \tilde Q_\tau \|_\infty,
\end{align*}
where $s = j^\ell_t$.
This implies that for all $t \geq t_0$,
$$ \max_{\tau \leq t+1} \| \hat Q_\tau - \tilde Q_\tau \|_\infty \leq \max_{\tau \leq t} \| \hat Q_\tau - \tilde Q_\tau \|_\infty.$$
Hence, on a sample path, $\{\hat Q_t\}$ is bounded below if and only if $\{\tilde Q_t\}$ is bounded below.
\end{proof}
\smallskip

The sequence $\{\tilde Q_t\}$ is more convenient  to work with than $\{\hat Q_t\}$, because by Lemma~\ref{lma-bdbl-hQ} we have the freedom to choose for each sample path the initial time $t_0$ and initial value $\tilde Q_{t_0}$ so that the resulting sequence $\{\tilde Q_t\}$ has a certain desirable structure.
In the next step of the proof, we will make such a choice that will equate $\{\tilde Q_t\}$ to the costs in certain single-player SSP problems ``neighboring'' \sspa, in the sense that the parameters of these SSP problems lie close to those of \sspa.

Before we proceed, we need some notation and definitions for various neighborhoods of the model parameters, which we will use throughout the rest of the proof. 

\subsubsection{Some notation and definitions} \label{sec-4.2.3}

As before, for a finite set $A$, we denote by $\mathcal{P}(A)$ the set of probability distributions on $A$,
and for $a \in A$ and $p \in \mathcal{P}(A)$, we write $p(a)$ for the probability of $a$ under $p$.  
The support of $p$, denoted $\supp(p)$, is the set $\{ a \in A \mid p(a)  \not= 0 \}$.
For $p_1, p_2 \in \mathcal{P}(A)$, we write $p_1 \ll p_2 $ if $p_1$ is absolutely continuous with respect to $p_2$, that is, $\supp(p_1) \subset \supp(p_2)$. 

We use the following notation to represent the neighborhoods of the transition probability and one-stage cost parameters of \sspa\ within certain affine subspaces. (The parameters of \sspa\ are defined in Definition~\ref{def-sspa} and will be referred to below.)\moveup\moveup
\begin{itemize}
\item For each $\ell = (i,u, v) \in \Stc$, let $\mbf{p}^{\ell}_o \in \mathcal{P}(\Ste)$ denote the transition probability distribution at state $\ell$ in \sspa, that is,
$\mbf{p}^{\ell}_o(j) = p_{ij}(u, v), \forall  j \in \Ste.$
For each $\delta > 0$, define $\A_{\delta}(\mbf{p}_o^{\ell})$ to be the set of probability distributions that are not only in the $\delta$-neighborhood of $\mbf{p}_o^{\ell}$ but also absolutely continuous with respect to $\mbf{p}^{\ell}_o$, i.e., 
$$\A_{\delta}(\mbf{p}_o^{\ell})  = \big\{ \, \mbf{d} \in \mathcal{P}(\Ste) \, \big| \,  | \mbf{d}(j) - \mbf{p}^{\ell}_o(j) | \leq \delta, \  \forall \, j \in \Ste, \ \text{and} \ \mbf{d} \ll \mbf{p}^{\ell}_o  \big\}. \moveup$$
\item Denote $\Stce^1 = \{(i,u) \mid i \in \Ste, u \in U(i) \}$ where $U(0) = \{0\}$ denotes the (dummy) control set for the termination state $0$. (This is the set of state-control pairs for player I.)
For each $(i,u) \in \Stce^1$, let $\mbf{p}^{iu}_{\bplb} \in \mathcal{P}(\Ste)$ denote the transition probability distribution at state $i$ with control $u$ in \sspa:
$\mbf{p}^{iu}_{\bplb}(j) = p_{{\bplb},ij}(u), \forall   j \in \Ste.$
For each $\delta > 0$, define $\A_{\delta}(\mbf{p}_{\bplb}^{iu})$ to be the subset of distributions in the $\delta$-neighborhood of $\mbf{p}^{iu}_{\bplb}$ that are absolutely continuous with respect to $\mbf{p}^{iu}_{\bplb}$:
$$\A_{\delta}(\mbf{p}_{\bplb}^{iu})  = \big\{ \, \mbf{d} \in \mathcal{P}(\Ste) \, \big| \,  | \mbf{d}(j) - \mbf{p}^{iu}_{\bplb}(j) | \leq \delta, \  \forall \, j \in \Ste, \ \text{and} \ \mbf{d} \ll \mbf{p}^{iu}_{\bplb}  \big\}.$$
In particular, for $(i,u) = (0,0)$, $\mbf{p}^{iu}_{\bplb}(0) = 1$ and  $\A_{\delta}(\mbf{p}^{iu}_{\bplb}) = \big\{ \mbf{p}^{iu}_{\bplb} \big\}$.\moveup
\item Let $g = \{g(i,u,v) \mid (i,u,v) \in \Stc \}$ be the vector of expected one-stage costs for the states in $\Stc$ in \sspa.
Define $\B_{\delta}(g)$ to be the $\delta$-neighborhood of $g$:
with $\mbf{c} = \{ c(i,u,v) \mid  (i,u,v) \in \Stc \}$,
$$\B_{\delta}(g)  = \big\{ \, \mbf{c} \, \big| \,    | c(i,u,v) - g(i,u,v) | \leq \delta, \  \forall \, (i,u,v) \in \Stc \big\}.$$
\item Similarly,  let $g_{\bplb} = \{ g_{\bplb}(i,u) \mid  (i, u) \in \Stce^1 \}$ be the vector of expected one-stage costs for the state-control pairs in $\Stce^1$ in \sspa.
Define $\B_{\delta}(g_{\bplb})$ to be the intersection of the $\delta$-neighborhood of $g_{\bplb}$ with a subspace:
with $\mbf{c} = \{ c(i,u) \mid  (i, u) \in \Stce^1 \}$, 
$$\B_{\delta}(g_{\bplb})  = \big\{ \, \mbf{c} \, \big| \,  | c(i,u) - g_{\bplb}(i,u) | \leq \delta, \  \forall \, (i, u) \in \Stce^1 , \ \text{and}  \ c(0,0) = 0  \, \big\}. \moveup$$
\end{itemize}
For brevity, we will simply call the above sets $\A_{\delta}(\mbf{p}_o^{\ell})$, $\A_{\delta}(\mbf{p}_{\bplb}^{iu})$, $\B_{\delta}(g)$ and $\B_{\delta}(g_{\bplb})$ the $\delta$-neighborhoods of the respective parameters of \sspa.

\subsubsection{Choose $t_0$ and initial $\tilde Q_{t_0}$ for a sample path and $\delta > 0$} \label{sec-bl-init}

To initialize the auxiliary sequence $\{\tilde Q_t\}$ defined by Eqs.~(\ref{eq-tQ-0})-(\ref{eq-tQ}), we will choose time $t_0$ and vector $\tilde Q_{t_0}$ based on the information of an entire sample path. 
First, we define two random sequences on the same probability space as the process $\{\hat Q_t\}$: a sequence $\{\G_t\}$ of one-stage cost vectors,
and a sequence $\{\mbf{q}_t\}$ of collections of probability distributions on $\Ste$. 
They will be used to determine the values of $t_0$ and $\tilde Q_{t_0}$  on a sample path, for any chosen $\delta > 0$.

The sequence $\{\G_t\}$ can be related to the empirical one-stage costs and is defined recursively as follows. For $t \geq 0$,
\begin{equation} \label{eq-definitg}
\G_{t+1}(i,u, v) = \big( 1 - \gamma_{t,\ell} \big) \, \G_{t}(i,u,v) + \gamma_{t, \ell}  \, \hat g \big(i,u,v, j^{\ell}_t \big),  \qquad \forall \, \ell = (i,u, v) \in \Stc,
\end{equation}
with $\G_0(\cdot) \equiv 0$ for $t=0$.
By the standard theory of stochastic approximation (see e.g., \cite[Prop.\ 4.1 and Example 4.3, p.\ 141-143]{BET} or \cite{ky03,borkar-sabook}), 
Eqs.~(\ref{eq-cond-j}) and (\ref{eq-cond-stepsize}) imply that
\begin{equation}  \label{eq-convinitg}
     \G_t(i,u,v) \, \asto \, g(i,u, v),  \qquad \forall \, (i,u,v) \in \Stc,
\end{equation}
where ``a.s.'' stands for ``almost surely,'' ``w.p.$1$.''

The sequence $\{ \mbf{q}_t \}$ can be related to empirical  frequencies of state transitions and is defined recursively as follows. 
It has $| \Stc|$ component sequences, $\{\mbf{q}^{\ell}_t\}$, $\ell \in \Stc$.
For each $\ell=(i,u,v) \in \Stc$, let 
$$ \mbf{q}^{\ell}_0 \in \mathcal{P}(\Ste), \quad \mbf{q}^{\ell}_0 \ll \mbf{p}^{\ell}_o,$$ 
and let
\begin{equation} \label{eq-definitp}
 \mbf{q}^{\ell}_{t+1} = \big( 1 - \gamma_{t,\ell} \big)\, \mbf{q}^{\ell}_t +  \gamma_{t,\ell} \, \mbf{e}_{j^{\ell}_t}, \qquad t \geq 0,
\end{equation}
where for $j \in \Ste$, $\mbf{e}_{j} \in \mathcal{P}(\Ste)$ denotes the distribution with $\mbf{e}_j(j) = 1$. Then for all $\ell = (i,u,v) \in \Stc$, we have
\begin{equation}   \label{eq-convinitp}
  \mbf{q}^{\ell}_t \, \asto \, \mbf{p}^{\ell}_o  \qquad \text{and} \qquad    \mbf{q}^{\ell}_t \ll \mbf{p}^{\ell}_o \ \ \ \text{w.p.$1$}, \ \ \ \forall \, t \geq 0, 
\end{equation}
where the first relation follows from Eqs.~(\ref{eq-cond-j}), (\ref{eq-cond-stepsize}) and the standard theory of stochastic approximation, and the second relation follows from the fact that $j^{\ell}_t$ is a random successor state of state $i$ with controls $(u,v)$.

Equations~(\ref{eq-convinitg}), (\ref{eq-convinitp}) indicate that the sequences $\{\G_t(i,u,v) \}$ and $\{\mbf{q}^{\ell}_t \},$ $\ell = (i,u,v) \in \Stc$, converge to the corresponding one-stage cost and transition probability parameters of \sspa. We then obtain the following lemma, with which we will choose the initial time $t_0$.

\smallskip
\begin{lemma} \label{lma-init-tQ}
There exists a set of probability one on which, given any $\delta > 0$, there is a path-dependent time $t_0$ such that
\begin{equation}
 \mbf{q}^{\ell}_t \in \A_\delta( \mbf{p}^{\ell}_o),   \qquad  \G_t \in \B_\delta(g),  \qquad \forall \, \ell \in \Stc,  \   t \geq t_0.
\end{equation} 
\end{lemma}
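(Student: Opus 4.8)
The plan is to derive the claim directly from the almost-sure convergence statements already established, namely $\G_t(i,u,v) \asto g(i,u,v)$ in Eq.~(\ref{eq-convinitg}) and $\mbf{q}^{\ell}_t \asto \mbf{p}^{\ell}_o$ together with $\mbf{q}^{\ell}_t \ll \mbf{p}^{\ell}_o$ for all $t$ in Eq.~(\ref{eq-convinitp}). Since $\Stc$ and $\Ste$ are finite, there are only finitely many scalar convergence statements and finitely many absolute-continuity statements involved, so I would work on the intersection of the corresponding probability-one events, which is itself a set of probability one; call it $\Omega_0$. Everything afterwards is a deterministic argument carried out on each fixed sample path in $\Omega_0$.

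Fixing a sample path $\omega \in \Omega_0$ and a scalar $\delta > 0$, I would first handle the cost vectors. For each $\ell = (i,u,v) \in \Stc$, convergence $\G_t(i,u,v) \to g(i,u,v)$ gives an integer $t_1^{\ell}$ with $|\G_t(i,u,v) - g(i,u,v)| \le \delta$ for all $t \ge t_1^{\ell}$; setting $\bar t_1 = \max_{\ell \in \Stc} t_1^{\ell}$ then yields $\G_t \in \B_{\delta}(g)$ for all $t \ge \bar t_1$ by the definition of $\B_{\delta}(g)$. Next I would handle the transition-probability collections: for each $\ell = (i,u,v) \in \Stc$ and each $j \in \Ste$, convergence $\mbf{q}^{\ell}_t(j) \to \mbf{p}^{\ell}_o(j)$ gives an integer $t_2^{\ell, j}$ with $|\mbf{q}^{\ell}_t(j) - \mbf{p}^{\ell}_o(j)| \le \delta$ for all $t \ge t_2^{\ell, j}$, and I would put $\bar t_2 = \max_{\ell \in \Stc,\, j \in \Ste} t_2^{\ell, j}$. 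Since in addition $\mbf{q}^{\ell}_t \ll \mbf{p}^{\ell}_o$ holds for every $t$ (in particular for all $t \ge \bar t_2$) on $\Omega_0$, both defining conditions of $\A_{\delta}(\mbf{p}^{\ell}_o)$ are met, so $\mbf{q}^{\ell}_t \in \A_{\delta}(\mbf{p}^{\ell}_o)$ for all $\ell \in \Stc$ and $t \ge \bar t_2$. Taking $t_0 = \max\{\bar t_1, \bar t_2\}$, a quantity depending on the path $\omega$ and on $\delta$, both membership relations in the statement hold for all $\ell \in \Stc$ and $t \ge t_0$, which proves the lemma on the probability-one set $\Omega_0$.

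I do not expect any genuine obstacle here: the proof is pure bookkeeping, combining finitely many $\varepsilon$--$N$ statements through a maximum over $\Stc$ and $\Ste$. The one point that merits a line of care is that $\delta$-closeness of $\mbf{q}^{\ell}_t$ to $\mbf{p}^{\ell}_o$ alone does not place $\mbf{q}^{\ell}_t$ in $\A_{\delta}(\mbf{p}^{\ell}_o)$ — one also needs the absolute-continuity condition $\mbf{q}^{\ell}_t \ll \mbf{p}^{\ell}_o$ — but this is precisely the second assertion of Eq.~(\ref{eq-convinitp}), valid for every $t$, so it is available at no cost.
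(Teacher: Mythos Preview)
Your proposal is correct and is exactly the argument the paper has in mind: the paper does not spell out a proof of this lemma but simply notes that it follows from Eqs.~(\ref{eq-convinitg}) and~(\ref{eq-convinitp}), and your write-up makes that implication explicit by intersecting the finitely many probability-one events and taking maxima of the resulting thresholds over the finite index sets $\Stc$ and $\Ste$. Your remark that the absolute-continuity part of the definition of $\A_{\delta}(\mbf{p}^{\ell}_o)$ is handled by the second assertion in Eq.~(\ref{eq-convinitp}) is exactly the point that needs to be checked, and you do so correctly.
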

\smallskip

In the rest of the proof, we consider \emph{any sample path} from the set of probability one given in Lemma~\ref{lma-init-tQ}.
For any given $\delta > 0$, we choose $t_0$ given in Lemma~\ref{lma-init-tQ} to be the initial time of the auxiliary sequence $\{ \tilde Q_t \}$. (Note that $t_0$ depends on the entire path and hence so does $\tilde Q_t$ for all $t$.)

We now define the initial $\tilde Q_{t_0}$. Let us fix some policy $\mu \in \Ra$ that is proper for the single-player problem \sspa. 
(Such a policy exists because $\bplb$ is an essentially proper policy of the game and \sspa\ satisfies the SSP Model Assumption; cf.\ the proof of Lemma~\ref{lma-sspa}, Section~\ref{sec-4.2.1}.)
Associate with $t_0$ and each $\ell=(i,u,v) \in \Stc$ a time-inhomogeneous Markov chain $(i_0, u_0,v_0), (i_1, u_1), (i_2, u_2), \ldots$ with time-varying one-stage costs as follows:\moveup\moveup
\begin{itemize}
\item The initial state of this Markov chain is $(i_0, u_0, v_0) = (i,u,v)$. The state space for time $k=0$ is $\Stc$ and for time $k \geq 1$ is $S_0 \times \U$ where $\U = \cup_{i \in \Ste} U(i)$ (the control space of player I).\moveup
\item The probability distribution of this Markov chain, 
denoted $\mbf{P}^{\ell}_{t_0}$, is defined by time-varying transition probabilities: 
for all $(\bar i, \bar u), (\bar j, \bar w) \in \Stce^1$,
\begin{align}
 \mbf{P}^{\ell}_{t_0} \big( i_1 = \bar j, u_1 = \bar w \mid i_{0} = i, u_{0} = u, v_0=v \big) & = \mbf{q}^{\ell}_{t_0}(\bar j) \cdot \mu ( \bar w \mid \bar j),  \qquad \text{for} \ k = 1,  \label{eq-Pt0-a} \\
 \mbf{P}^{\ell}_{t_0} \big( i_k = \bar j, u_k = \bar w \mid i_{k-1} = \bar i, u_{k-1} = \bar u \big) & = \mbf{p}_{\bplb}^{\bar i \bar u}(\bar j) 
 \cdot \mu ( \bar w \mid \bar j),  \qquad \text{for} \ k \geq 2. \label{eq-Pt0-b}
\end{align}
The transition probabilities at those $(\bar i, \bar u) \not\in \Stce^1$ can be defined arbitrarily because the chain has zero probability to visit such state-control pairs at any time, in view of the fact that $\mu$ is a policy for \sspa.\moveup
\item Define time-varying one-stage cost functions $g_0^{\ell, t_0}: \Stc \mapsto \Re$ and $g_k^{\ell, t_0}: \Stce^1 \mapsto \Re$, $k \geq 1$, to be
\begin{equation}
  g^{\ell, t_0}_0 = \G_{t_0}, \qquad g^{\ell, t_0}_k = g_{\bplb}, \qquad  \ k \geq 1. \label{eq-gt0}
\end{equation}  
For $k \geq 1$, we extend $g^{\ell,t_0}_k$ to $\Ste \times \U$ by defining its values outside the domain $\Stce^1$ to be $+ \infty$, and we will treat $0 \cdot  \infty = 0$. This convention will be followed throughout. 
\end{itemize}

We now define for every $\ell = (i,u,v) \in \Stc$,
\begin{equation} \label{eq-tQ0}
 \tilde Q_{t_0} (i,u, v) = g^{\ell, t_0}_0(i, u, v) + \E^{ \mbf{P}^{\ell}_{t_0}} \Big[ \, \sum_{k=1}^\infty  g^{\ell, t_0}_k(i_k, u_k) \,  
 \Big], 
\end{equation}
where $\mbf{P}^{\ell}_{t_0}$ in the superscript indicates that the expectation is taken with respect to it.
The above expectation is well-defined and finite, and furthermore, the order of summation and expectation can be exchanged, i.e., 
$$\tilde Q_{t_0} (i,u,v) = g^{\ell, t_0}_0(i, u, v) + \sum_{k=1}^\infty  \E^{ \mbf{P}^{\ell}_{t_0}} \Big[ \,  g^{\ell, t_0}_k(i_k, u_k)  \, 
\Big].$$
This is because according to the preceding definition of the Markov chain associated with $t_0$ and $\ell=(i,u,v)$, under $\mbf{P}^{\ell}_{t_0}$,  from time $1$ onwards, the process $\{(i_k, u_k), k \geq 1\}$ evolves and incurs costs as in \sspa\  under the proper policy $\mu$ [cf.\ Eqs.~(\ref{eq-Pt0-a})-(\ref{eq-gt0})],  
and consequently, $\sum_{k=1}^\infty  | g^{\ell, t_0}_k(i_k, u_k)|$ is finite almost surely and its expectation is finite with respect to $\mbf{P}^{\ell}_{t_0}$. 

The definition of $\tilde Q_{t_0}$ above has two key properties:\moveup\moveup 
\begin{itemize}
\item[(i)] Each component $\tilde Q_{t_0}(i,u,v)$ equals the expected total cost of some randomized Markov policy(which is $\mu$ here) in a time-inhomogeneous (single-player) SSP problem.\moveup
\item[(ii)]  The parameters of that SSP problem, i.e., transition probabilities 
and one-stage costs, 
all lie in the $\delta$-neighborhoods $\A_{\delta}(\mbf{p}_o^{\ell})$, $\A_{\delta}(\mbf{p}_{\bplb}^{iu})$, $\B_{\delta}(g)$, $\B_{\delta}(g_{\bplb})$ of the corresponding parameters of \sspa.\moveup\moveup
\end{itemize}
We now show that these properties are preserved in $\tilde Q_t, t \geq t_0$ defined by iteration~(\ref{eq-tQ}). 

\subsubsection{Interpret $\{\tilde Q_t\}$ as total costs in certain SSP problems neighboring \sspa} \label{sec-4.2.5}

The next lemma states precisely the interpretation we need of the auxiliary sequence $\{\tilde Q_t\}$ resulting from the preceding choice of $t_0$ and $\tilde Q_{t_0}$. 
Briefly speaking, each component of $\tilde Q_t, t \geq t_0,$ is equal to the expected total cost of a randomized Markov policy (represented by $\{\mu^{\ell,t}_k, k \geq 1\}$ below) in a time-inhomogeneous SSP problem whose parameters (transition probabilities and one-stage costs, represented by $\{p^{\ell,t}_{k}, g^{\ell,t}_k, k \geq 0\}$ below) lie in the $\delta$-neighborhoods of the corresponding parameters of \sspa. 

\smallskip
\begin{lemma} \label{lma-tQ}
Let the sequences $\{\G_t\}$ and $\{\mbf{q}^{\ell}_t \},$ $\ell = (i,u,v) \in \Stc$ be as defined by Eqs.~(\ref{eq-definitg}), (\ref{eq-definitp}), respectively.  Consider any sample path from the set of probability one given in Lemma~\ref{lma-init-tQ}.
For any $\delta > 0$, with $t_0$ and $\tilde Q_{t_0}$ given as in Section~\ref{sec-bl-init} for the chosen $\delta$, the sequence $\{\tilde Q_t\}$ defined by Eqs.~(\ref{eq-tQ-0})-(\ref{eq-tQ}) has the following properties. For each $\ell=(i,u,v) \in \Stc$ and $t \geq 0$:\moveup
\begin{enumerate}
\item[\rm (a)] 
$\tilde Q_{t} (i,u,v)$ can be expressed as
$$\tilde Q_{t} (i,u,v) = g^{\ell,t}_0(i_0, u_0, v_0) + \E^{ \mbf{P}^{\ell}_{t}} \Big[ \, \sum_{k=1}^\infty  g^{\ell,t}_k(i_k, u_k) \, 
\Big] = g^{\ell,t}_0(i_0, u_0, v_0)  + \sum_{k=1}^\infty  \E^{ \mbf{P}^{\ell}_{t}} \Big[ \,  g^{\ell,t}_k(i_k, u_k) \, 
\Big]$$
for some probability distribution $\mbf{P}^{\ell}_t$ of a time-inhomogeneous Markov chain $(i_0, u_0, v_0)$, $(i_1, u_1)$, $(i_2, u_2), \ldots$ with $(i_0, u_0,v_0) = (i,u,v)$ and $(i_k,u_k) \in \Ste \times \U$ for $k \geq 1$,
and for some one-stage cost functions $g^{\ell,t}_0: \Stc \mapsto \Re$, $g_k^{\ell,t}: \Stce^1 \mapsto \Re$, $k \geq 1$ (with $g_k^{\ell,t} \equiv +\infty$ on $(\Ste \times \U) \setminus \Stce^1$). 
\item[\rm (b)]  The transition probabilities of the Markov chain in (a) are time-varying and have the following product form: for all $(\bar i,\bar u), (\bar j, \bar w) \in \Stce^1$,
\begin{align*}
\mbf{P}^{\ell}_t \big( i_1 = \bar j, u_1 = \bar w \mid i_{0} = i, u_{0} = u, v_0 = v \big) & =   p^{\ell,t}_{0}(\bar j \mid i, u, v) \cdot  \mu^{\ell,t}_1 (\bar w \mid \bar j),  \qquad \text{for} \ k  = 1, \\
   \mbf{P}^{\ell}_t \big( i_k = \bar j, u_k = \bar w \mid i_{k-1} = \bar i, u_{k-1}  = \bar u \big) & =   p^{\ell,t}_{k-1}(\bar j \mid \bar i, \bar u) \cdot  \mu^{\ell,t}_k ( \bar w \mid \bar j),   \qquad \ \text{for} \ k \geq 2,
\end{align*}   
where $p^{\ell,t}_{k}$ and $\mu^{\ell,t}_k$ are conditional probability distributions such that for all $k \geq 1$ and  $(\bar i, \bar u) \in \Stce^1$, $\bar j \in \Ste$, 
$$  
p^{\ell,t}_{k}(\cdot \mid \bar i, \bar u)   \in \A_\delta \big(\mbf{p}_{\bplb}^{\bar i \bar u} \big), \qquad \mu^{\ell,t}_k ( \cdot \mid \bar j) \in \mathcal{P}(\U) \ \text{with}  \    \supp\big(\mu^{\ell,t}_k ( \cdot \mid \bar j)  \big) \subset U(\bar j),$$  
and for $k = 0$, $p^{\ell,t}_0(\cdot \mid i, u, v) \in \A_\delta \big(\mbf{p}^\ell_o\big)$ with $p^{\ell,t}_0(\cdot \mid i, u, v) = \mbf{q}^{\ell}_t$ when $t \geq t_0$.
\item[\rm (c)] The one-stage cost functions $g^{\ell,t}_k$ in (a) satisfy
\begin{equation}
  g^{\ell,t}_0 \in \B_\delta(g), \qquad g^{\ell,t}_k \in \B_\delta(g_{\bplb}), \qquad k \geq 1, \notag
\end{equation}  
with $g^{\ell,t}_0(i,u,v) = \G_t(i,u,v)$ for $t \geq t_0$.
\item[\rm (d)] For the Markov chain in (a), there exists an integer $k_t \geq 1$ such that $\{ (i_k, u_k), k \geq k_t \}$ evolves and incurs costs as in \sspa\ under the proper policy $\mu$; i.e., for $k \geq k_t$,
$$  \mu^{\ell,t}_{k}(\cdot \mid \bar i) = \mu( \cdot \mid \bar i), \qquad p^{\ell,t}_{k}(\cdot \mid \bar i, \bar u) = \mbf{p}_{\bplb}^{\bar i \bar u}, \qquad g^{\ell,t}_k(\bar i, \bar u) = g_{\bplb}(\bar i, \bar u), \quad \forall \, (\bar i, \bar u) \in \Stce^1.$$
\end{enumerate}
\end{lemma}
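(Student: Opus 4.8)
The plan is to prove the lemma by strong induction on $t$, constructing the Markov chains $\mbf{P}^\ell_t$, the kernels $p^{\ell,t}_k$ and $\mu^{\ell,t}_k$, and the one-stage costs $g^{\ell,t}_k$ explicitly along the way. For $t\le t_0$ we have $\tilde Q_t=\tilde Q_{t_0}$, and the data satisfying properties (a)--(d) is exactly the construction of Section~\ref{sec-bl-init}: the chain $\mbf{P}^\ell_{t_0}$ already has the product form (\ref{eq-Pt0-a})--(\ref{eq-Pt0-b}), its first transition $\mbf{q}^\ell_{t_0}$ lies in $\A_\delta(\mbf{p}^\ell_o)$ and its time-$0$ cost $\G_{t_0}$ in $\B_\delta(g)$ (Lemma~\ref{lma-init-tQ}), it evolves and incurs costs as in \sspa\ under the proper policy $\mu$ from time $1$ on, so (d) holds with $k_{t_0}=1$, and the finiteness and sum/expectation interchange in (a) are those already established in Section~\ref{sec-bl-init}.

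For the inductive step, fix $t\ge t_0$ and $\ell=(i,u,v)$, and write $s=j^\ell_t$. By (\ref{eq-tQ}), $\tilde Q_{t+1}(i,u,v)$ is the convex combination, with weights $1-\g_{t,\ell}$ and $\g_{t,\ell}$, of $\tilde Q_t(i,u,v)$ and of $\hat g(i,u,v,s)+\sum_{\tilde v\in V(s)}\bplb_s(\tilde v)\,\tilde Q_{\tau_{\ell\tilde\ell}(t)}(s,u^\ell_t,\tilde v)$ (with $\tilde\ell=(s,u^\ell_t,\tilde v)$, and the last sum equal to $0$ if $s=0$). By the induction hypothesis, which covers all indices $\le t$ (values $\tau<t_0$ fall under the base case, and the delays satisfy $\tau_{\ell\tilde\ell}(t)\le t$), each of $\tilde Q_t(i,u,v)$ and the delayed terms $\tilde Q_{\tau_{\ell\tilde\ell}(t)}(s,u^\ell_t,\tilde v)$ admits a representation as in (a)--(c) with data in the prescribed $\delta$-neighborhoods and satisfying the eventual-tail property (d). I would read the right-hand side as the total cost of a single ``mixture process'' started at $(i,u,v)$: with probability $1-\g_{t,\ell}$ follow the chain of $\tilde Q_t(i,u,v)$; with probability $\g_{t,\ell}$ move to $s$ at time $1$ with control $u^\ell_t$ and then, with conditional probability $\bplb_s(\tilde v)$, follow the chain of $\tilde Q_{\tau_{\ell\tilde\ell}(t)}(s,u^\ell_t,\tilde v)$ shifted by one time step. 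Its time-$0$ cost is then $\G_{t+1}(i,u,v)$ by (\ref{eq-definitg}), and, using (\ref{eq-definitp}) and the induction hypothesis $p^{\ell,t}_0(\cdot\mid i,u,v)=\mbf{q}^\ell_t$, its first transition is $(1-\g_{t,\ell})\mbf{q}^\ell_t+\g_{t,\ell}\mbf{e}_s=\mbf{q}^\ell_{t+1}$.

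The mixture process is in general not a product-form Markov chain, and converting it into one is the step I expect to be the main obstacle. The key point is that the expected total cost depends only on the time-$k$ marginals $\nu_k$ of $(i_k,u_k)$, so it suffices to exhibit \emph{some} product-form time-inhomogeneous chain $\mbf{P}^\ell_{t+1}$ with those same marginals. I would set $p^{\ell,t+1}_0(\cdot\mid i,u,v)=\mbf{q}^\ell_{t+1}$ and $g^{\ell,t+1}_0=\G_{t+1}$; and for $k\ge1$ let $p^{\ell,t+1}_{k-1}(\cdot\mid\bar i,\bar u)$ and $g^{\ell,t+1}_k(\bar i,\bar u)$ be the mixtures of the contributing branches' one-step state kernels, resp.\ one-step costs, weighted by the branch probabilities conditional on being at $(\bar i,\bar u)$ at time $k-1$, resp.\ time $k$, and let $\mu^{\ell,t+1}_k(\cdot\mid\bar j)$ be the law of $u_k$ given $i_k=\bar j$ under the mixture. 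Because $\A_\delta(\mbf{p}^\ell_o)$, $\A_\delta(\mbf{p}_{\bplb}^{iu})$, $\B_\delta(g)$ and $\B_\delta(g_{\bplb})$ are all \emph{convex}, these conditional mixtures remain in the prescribed neighborhoods (at the time-$1$ state one also invokes the identity $\sum_{\tilde v}\bplb_s(\tilde v)\,p_{sj}(u^\ell_t,\tilde v)=p_{\bplb,sj}(u^\ell_t)$, so that the $\bplb_s$-average of the sub-chains' first kernels lands in $\A_\delta(\mbf{p}_{\bplb}^{s u^\ell_t})$); each $\mu^{\ell,t+1}_k(\cdot\mid\bar j)$ stays supported in $U(\bar j)$ since every branch's policy does; and pushing $\nu_{k-1}$ through $p^{\ell,t+1}_{k-1}\cdot\mu^{\ell,t+1}_k$ one checks that $\nu_k$ and the branch-averaged one-stage costs are reproduced, which yields exactly the representation in (a) (the sum/expectation interchange being justified as in Section~\ref{sec-bl-init}, since on each fixed path the chain follows the proper policy $\mu$ in \sspa\ from a finite time on). Property (d) for $t+1$ then holds with $k_{t+1}=\max\{k_t,\,1+\max_{\tilde v}k_{\tau_{\ell\tilde\ell}(t)}\}$: beyond that time every contributing branch already evolves exactly as in \sspa\ under $\mu$, so $p^{\ell,t+1}_k\equiv\mbf{p}_{\bplb}$, $\mu^{\ell,t+1}_k\equiv\mu$ and $g^{\ell,t+1}_k\equiv g_{\bplb}$ there. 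The delicate bookkeeping will be verifying that these conditional mixtures (i) stay inside the convex $\delta$-neighborhoods and (ii) genuinely reproduce the marginals $\nu_k$.
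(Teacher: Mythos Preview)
Your proposal is correct and follows essentially the same approach as the paper's proof: induction on $t$, with the inductive step interpreting $\tilde Q_{t+1}(i,u,v)$ as the expected total cost of a mixture of time-shifted chains, then ``Markovianizing'' this mixture by defining $g^{\ell,t+1}_k$, $p^{\ell,t+1}_k$, $\mu^{\ell,t+1}_k$ as the appropriate conditional averages over the contributing branches, invoking convexity of the $\delta$-neighborhoods (together with the identity $\sum_{\tilde v}\bplb_s(\tilde v)\,\mbf{p}^{(s,u^\ell_t,\tilde v)}_o=\mbf{p}^{s u^\ell_t}_{\bplb}$ at $k=1$), and finally verifying by induction on $k$ that the product-form chain reproduces the marginals of the mixture. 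Your formula $k_{t+1}=\max\{k_t,\,1+\max_{\tilde v}k_{\tau_{\ell\tilde\ell}(t)}\}$ and your identification of the two delicate points (closure under convex combination, marginal reproduction) match the paper exactly.
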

\smallskip

The proof of Lemma~\ref{lma-tQ} is by induction on $t$ for each $(i,u,v) \in \Stc$. In the proof, we construct the one-stage cost functions and transition probabilities for the time-inhomogeneous Markov chain associated with $t+1$, and this procedure resembles the construction of a cost-equivalent Markov policy in the classical MDP theory, for any given history-dependent policy and any given initial state. Other than the construction, the proof of Lemma~\ref{lma-tQ} consists of mostly straightforward verifications of the properties (a)-(d) in the statement. Nevertheless, the verifications turn out to be lengthy, so we give the proof of Lemma~\ref{lma-tQ} in~\ref{app-sec1}.

\subsubsection{Lower boundedness of $\{\tilde Q_t\}$}\label{sec-4.2.6}

We now come to the final step of our boundedness analysis: to lower-bound the optimal total costs of those time-inhomogeneous SSP problems neighboring \sspa\ and thereby lower-bound $\{\tilde Q_t\}$.
As we have shown with Lemmas~\ref{lma-init-tQ} and \ref{lma-tQ},  for each sample path from a set of probability one, and for each $\delta > 0$, we can construct a sequence $\{\tilde Q_t\}$ such that $\tilde Q_{t}(i,u,v)$ for each $(i,u,v) \in \Stc$ and $t \geq 0$ is the expected total cost of a randomized Markov policy in an SSP problem that has time-varying transition and one-stage cost parameters lying in the $\delta$-neighborhoods of the respective parameters of \sspa. 
As we show below,
when $\delta$ is sufficiently small, the total costs in all such neighboring SSP problems can be bounded uniformly from below.

Let us be precise about the type of SSP problems involved here. Consider all time-inhomogenous SSP problems that have the same state-control space as \sspa\ and have initial states in $\Stc$ [cf.\ the definition of \sspa\ given in Definition~\ref{def-sspa}]. For such an SSP, denote by $p_k$ and $g_k$ its state transition probability and its one-stage cost function, respectively, at the $k$th stage.  Let us call such an SSP  \emph{a $\delta$-perturbed version of \sspa} if 
for $k = 0$,
$$ g_0 \in \B_\delta(g),  \qquad p_0(\cdot \mid i,u,v) \in \A_\delta(\mbf{p}^\ell_o), \quad \forall \, \ell=(i,u,v) \in \Stc,  $$
and for $k \geq 1$,
$$ g_k \in \B_\delta(g_{\bplb}), \qquad p_k(\cdot \mid i, u) \in \A_\delta(\mbf{p}^{iu}_{\bplb}), \quad \forall \, (i,u) \in \Stce^1.$$

Because \sspa\ satisfies the SSP Model Assumption (Lemma~\ref{lma-sspa}), we have the following lemma. It was proved in Yu and Bertsekas~\cite[Section 3.3.4]{yub_bd11} and we will not repeat the proof here. The idea of the proof is to consider a time-homogeneous compact-control SSP problem where the controls include not only the regular controls but also the transition/one-stage cost parameters in the $\delta$-neighborhoods $\A_{\delta}(\mbf{p}_o^{\ell})$, $\A_{\delta}(\mbf{p}_{\bplb}^{iu})$, $\B_{\delta}(g)$, $\B_{\delta}(g_{\bplb})$ of the respective parameters of \sspa, and to show that the optimal total costs of this compact-control SSP are finite when $\delta$ is sufficiently small, by using a continuity argument together with the fact that \sspa\ satisfies the SSP model assumption and hence has finite optimal total costs by \cite{bet-ssp91}.

\smallskip
\begin{lemma}[{\cite[Section 3.3.4]{yub_bd11}}] \label{lma-bdperturbssp}
There exist  $\bar \delta > 0$ and a finite constant $C$ such that for all $\delta \in [0, \bar \delta]$, the optimal total cost of any $\delta$-perturbed version of \sspa, for any initial state, is greater than $C$.
\end{lemma}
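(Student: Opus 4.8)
\emph{Proof idea.} The plan is to sandwich every $\delta$-perturbed version of \sspa\ from below by a single time-homogeneous compact-control SSP problem, and then apply the single-player SSP theory of \cite{bet-ssp91}. As a preliminary, fix a threshold $\bar\delta_0>0$ smaller than, say, half the least positive entry occurring in the transition distributions $\mbf{p}^{\ell}_o$ ($\ell\in\Stc$) and $\mbf{p}^{iu}_{\bplb}$ ($(i,u)\in\Stce^1$) of \sspa. For $\delta\le\bar\delta_0$, every distribution in $\A_\delta(\mbf{p}^{\ell}_o)$ has exactly the same support as $\mbf{p}^{\ell}_o$, and likewise for $\A_\delta(\mbf{p}^{iu}_{\bplb})$; hence perturbations within these neighborhoods do not change the reachability graph or the recurrent-class structure of the relevant Markov chains. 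This is what will let us transfer structural properties of \sspa\ to the perturbed problems.

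Next I would introduce, for $\delta\le\bar\delta_0$, the time-homogeneous compact-control SSP $\hat M_\delta$ on state space $\Ste\cup\Stc$ whose controls carry the perturbation: from $\ell=(i,u,v)\in\Stc$ the control is a pair $(\mbf{d},c)\in\A_\delta(\mbf{p}^{\ell}_o)\times[g(i,u,v)-\delta,\,g(i,u,v)+\delta]$, giving transition $\mbf{d}$ and cost $c$; from $i\in\St$ the control is a triple $(u,\mbf{d},c)$ with $u\in U(i)$, $\mbf{d}\in\A_\delta(\mbf{p}^{iu}_{\bplb})$, $c\in[g_{\bplb}(i,u)-\delta,\,g_{\bplb}(i,u)+\delta]$, giving transition $\mbf{d}$ and cost $c$; state $0$ is absorbing and cost-free. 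The control sets are compact and the transition probabilities and costs are continuous (indeed affine) in the control, so $\hat M_\delta$ is of the type treated in Section~\ref{sec-2.1}. Its role is this: for any $\delta$-perturbed version of \sspa, its fixed stagewise perturbation sequence together with an optimal policy of that version is itself a history-dependent policy of $\hat M_\delta$, so the optimal cost of that $\delta$-perturbed version at any initial state is at least $J^*_{\hat M_\delta}$ at that state; and since $\A_\delta\subset\A_{\bar\delta_0}$ and $\B_\delta\subset\B_{\bar\delta_0}$ for $\delta\le\bar\delta_0$, it is at least $J^*_{\hat M_{\bar\delta_0}}$. It therefore suffices to produce $\bar\delta\in(0,\bar\delta_0]$ for which $\hat M_{\bar\delta}$ has a finite optimal cost vector, and then set $C:=\min_{\ell}J^*_{\hat M_{\bar\delta}}(\ell)-1$.

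By \cite{bet-ssp91} it is enough to verify the SSP Model Assumption for $\hat M_{\bar\delta}$. A proper policy exists for every $\delta$: take the regular-control part to be a proper policy $\mu\in\SD$ of \sspa\ (which exists by Lemma~\ref{lma-sspa}) and the perturbation part to be the unperturbed parameters (which lie in the corresponding $\delta$-neighborhoods); this induces in $\hat M_\delta$ the same chain as $\mu$ in \sspa\ and hence is proper. The substantial half is to show that for all sufficiently small $\delta$, every improper stationary deterministic policy of $\hat M_\delta$ incurs cost $+\infty$ at some initial state. Here I would use that, since \sspa\ satisfies the Model Assumption and has finite state and control spaces, there are finitely many improper policies in $\SD$, and each such $\phi$ has a recurrent class $E_\phi\not\ni 0$ with strictly positive (finite) average one-stage cost $\rho_\phi$ --- otherwise, by Puterman~\cite[Thm.\ 9.4.1]{puterman94}, no state of $\phi$ could have infinite cost. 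Set $\rho^*:=\min_\phi\rho_\phi>0$. For an improper stationary deterministic policy $\hat\phi$ of $\hat M_\delta$ with $\delta\le\bar\delta_0$, the support-preservation property forces its regular-control part $\phi$ to be an improper policy of \sspa\ with the same recurrent classes, in particular $E_\phi$; on $E_\phi$ the chain of $\hat\phi$ is irreducible, so its stationary distribution --- a continuous (rational) function of the transition entries on the fixed finite support $E_\phi$ --- converges to that of $\phi$ as $\delta\to 0$, while the perturbed costs on $E_\phi$ stay within $\delta$ of those of \sspa. Hence the average cost of $\hat\phi$ on $E_\phi$ converges to $\rho_\phi\ge\rho^*$, uniformly over the finitely many pairs $(\phi,E_\phi)$, so for $\delta$ below some $\bar\delta\in(0,\bar\delta_0]$ it exceeds $\rho^*/2>0$; by \cite[Thm.\ 9.4.1]{puterman94} this makes the cost of $\hat\phi$ equal to $+\infty$ on $E_\phi$. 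This establishes the Model Assumption for $\hat M_{\bar\delta}$, hence the finiteness of $J^*_{\hat M_{\bar\delta}}$, hence the constant $C$.

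The step I expect to be the main obstacle is this last one: turning the qualitative fact that \sspa\ has no improper policy lacking a positive-average-cost recurrent class into a statement uniform in $\delta$, which requires both the absolute-continuity (support) argument and the continuity of stationary distributions and average costs under perturbation, used against the finiteness of $\SD$.
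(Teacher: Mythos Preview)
Your proposal is correct and follows essentially the same approach as the paper: the paper does not give a full proof but sketches exactly the idea you carry out---embed all $\delta$-perturbed versions into a single time-homogeneous compact-control SSP whose controls include the perturbation parameters, and then use a continuity argument together with the fact that \sspa\ satisfies the SSP Model Assumption (hence has finite optimal costs by \cite{bet-ssp91}) to conclude that this enlarged SSP also has finite optimal costs for small enough $\delta$. Your detailed continuity argument (support preservation under $\delta\le\bar\delta_0$, finiteness of improper $\phi\in\SD$, existence of a recurrent class $E_\phi$ with strictly positive average cost, and continuity of stationary distributions and average costs in the transition/cost parameters) is a legitimate way to fill in the step the paper leaves to \cite{yub_bd11}.
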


Combining Lemma~\ref{lma-tQ} with Lemma~\ref{lma-bdperturbssp}, we obtain the boundedness of $\{\tilde Q_t\}$ as stated below.

\smallskip
\begin{lemma} \label{lma-tQ-bdbl}
Let $\delta \in (0, \bar \delta]$ where $\bar \delta$ is as given in Lemma~\ref{lma-bdperturbssp}. 
Then on any sample path from the set of probability one given in Lemma~\ref{lma-init-tQ}, 
with $t_0$ and $\tilde Q_0$ defined as in Section~\ref{sec-bl-init} for the chosen $\delta$, the sequence $\{\tilde Q_{t} \}$ defined by Eqs.~(\ref{eq-tQ-0})-(\ref{eq-tQ}) is bounded below.
\end{lemma}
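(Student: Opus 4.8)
The plan is to obtain this statement directly by gluing together the two preceding results: the representation of the auxiliary iterates supplied by Lemma~\ref{lma-tQ} and the uniform lower bound of Lemma~\ref{lma-bdperturbssp}. Essentially all the substantive work has already been done in proving those two lemmas, so the argument here is mostly bookkeeping. First I would fix $\delta \in (0, \bar\delta]$, with $\bar\delta$ as in Lemma~\ref{lma-bdperturbssp}, fix an arbitrary sample path from the probability-one set of Lemma~\ref{lma-init-tQ}, and take $t_0$ and the initial vector $\tilde Q_{t_0}$ as constructed in Section~\ref{sec-bl-init} for this $\delta$ and this path, so that the sequence $\{\tilde Q_t\}$ of Eqs.~(\ref{eq-tQ-0})--(\ref{eq-tQ}) is well-defined along the path.

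Next, for each $\ell = (i,u,v) \in \Stc$ and each $t \geq 0$, I would appeal to Lemma~\ref{lma-tQ}(a) to write $\tilde Q_t(i,u,v) = g^{\ell,t}_0(i,u,v) + \E^{\mbf{P}^\ell_t}\big[\sum_{k=1}^\infty g^{\ell,t}_k(i_k, u_k)\big]$, which exhibits $\tilde Q_t(i,u,v)$ as the expected total cost --- starting from the initial state $(i,u,v) \in \Stc$ --- incurred by the randomized Markov policy $\{\mu^{\ell,t}_k\}$ in a time-inhomogeneous SSP problem whose stage-$k$ transition probabilities and one-stage costs are $p^{\ell,t}_k$ and $g^{\ell,t}_k$. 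By part (d) this expectation is a genuine convergent series (the tail of the chain being governed by the proper policy $\mu$ in \sspa), hence a well-defined finite number. The key verification is then that parts (b) and (c) of Lemma~\ref{lma-tQ} --- $g^{\ell,t}_0 \in \B_\delta(g)$ and $p^{\ell,t}_0(\cdot \mid i,u,v) \in \A_\delta(\mbf{p}^\ell_o)$ at stage $0$, and $g^{\ell,t}_k \in \B_\delta(g_{\bplb})$ and $p^{\ell,t}_k(\cdot \mid \bar i, \bar u) \in \A_\delta(\mbf{p}^{\bar i \bar u}_{\bplb})$ for every stage $k \geq 1$ and $(\bar i, \bar u) \in \Stce^1$ --- are exactly the conditions under which this SSP qualifies as a $\delta$-perturbed version of \sspa\ in the sense of Section~\ref{sec-4.2.6}, while the support condition on $\mu^{\ell,t}_k$ in part (b) makes it an admissible policy for that SSP.

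Finally, I would conclude as follows. Since $\tilde Q_t(i,u,v)$ is the total cost of one particular admissible policy in a $\delta$-perturbed version of \sspa\ with initial state $(i,u,v)$, it is no smaller than the optimal total cost of that SSP for initial state $(i,u,v)$; because $\delta \leq \bar\delta$, Lemma~\ref{lma-bdperturbssp} says this optimal total cost exceeds the finite constant $C$. As $\ell$ and $t$ are arbitrary (the values $t \leq t_0$ present no exception, since there $\tilde Q_t = \tilde Q_{t_0}$, which is itself the $t = t_0$ instance of Lemma~\ref{lma-tQ}), this yields $\tilde Q_t(i,u,v) > C$ for all $t \geq 0$ and all $(i,u,v) \in \Stc$, i.e., $\{\tilde Q_t\}$ is bounded below along the path, as claimed.

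I do not expect a genuine obstacle at this step: the delicate parts --- the construction of the neighboring time-inhomogeneous SSP and the verification of the $\delta$-neighborhood memberships (Lemma~\ref{lma-tQ}), and the compactness/continuity argument bounding the optimal costs of \emph{all} $\delta$-perturbed versions of \sspa\ uniformly (Lemma~\ref{lma-bdperturbssp}) --- are already in hand. The only thing needing care is lining up the two parametrizations so that the SSP produced by Lemma~\ref{lma-tQ} matches verbatim the definition of ``$\delta$-perturbed version of \sspa,'' including the stage-$0$ versus stage-$(k\geq 1)$ split of the parameter constraints, and observing that the explicit convergent-series form of $\tilde Q_t(i,u,v)$ from part (a) legitimizes comparing it against the infimal optimal total cost.
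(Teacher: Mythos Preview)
Your proposal is correct and matches the paper's approach exactly: the paper simply states that Lemma~\ref{lma-tQ-bdbl} follows by combining Lemma~\ref{lma-tQ} with Lemma~\ref{lma-bdperturbssp}, and your write-up is precisely the bookkeeping that makes this combination explicit. The key observations you make---that the time-inhomogeneous SSP produced by Lemma~\ref{lma-tQ} is a $\delta$-perturbed version of \sspa, and that $\tilde Q_t(i,u,v)$, being the cost of one admissible policy, dominates the optimal cost and hence the constant $C$ of Lemma~\ref{lma-bdperturbssp}---are exactly what is intended.
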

\smallskip

Lemma~\ref{lma-tQ-bdbl} together with Lemma~\ref{lma-bdbl-hQ} implies that the sequence $\{\hat Q_t\}$ generated by the iteration~(\ref{eq-bdbl-Q}) is bounded below w.p.$1$, which in turn implies, by Lemma~\ref{lma-bdbl-Q}, that the Q-learning iterates $\{Q_t\}$ are bounded below w.p.$1$. 
A symmetric argument then yields that $\{Q_t\}$ is bounded above w.p.$1$, as we explained at the beginning of Section~\ref{sec-bd-gen}. 
This completes the proof of Theorem~\ref{thm-boundql} on the boundedness of Q-learning iterates $\{Q_t\}$ for SSP games satisfying Assumption~\ref{assum-ssp}.

\subsection{Boundedness Analysis for a Special Case} \label{sec-bd-special}

If instead of Assumption~\ref{assum-ssp}, we make a stronger model assumption on the SSP game, 
then there is a simpler proof of the boundedness (and hence convergence) of Q-learning iterates, based on a contraction argument.
We present this analysis to conclude Section~\ref{sec-bd}.

First, let us define a notion of proper policies for an SSP game and use it to formulate the stronger model assumption just mentioned. (Our definition of properness differs slightly from that in \cite{sspgame_pt99}.) Recall that a pair of policies of the two players is non-prolonging if under those policies, the termination state is reached w.p.$1$ for all initial states.

\smallskip
\begin{definition}[Proper Policies in a Finite-Space SSP Game] \label{def-proper-sspgame}
For a finite-space SSP game, we say a policy $\pla \in \Ra$ of player I is \emph{proper} if for \emph{every} policy $\plb \in \Rb$ of player II, $(\pla,\plb)$ is non-prolonging. Similarly, we say a policy $\plb \in \Rb$ of player II is \emph{proper} if for \emph{every} policy $\pla \in \Ra$ of player I, $(\pla,\plb)$ is non-prolonging.
\end{definition}

\begin{assumption} \label{assump-4.1}
The following holds in a finite-space SSP game: \moveup\moveup
\begin{itemize}
\item[(i)] Every player has a proper stationary randomized policy.\moveup
\item[(ii)] For any pair of policies $(\npla, \nplb) \in \Ra \times \Rb$ that is prolonging, $J(i; \npla, \nplb) = + \infty$ or $- \infty$ for at least one initial state $i$.\moveup
\end{itemize}
\end{assumption}
\smallskip

By Definition~\ref{def-proper-sspgame}, if a player plays a proper policy, the optimal total costs or rewards for the other player are finite for all initial states. Therefore, an SSP game that satisfies the model conditions in Assumption~\ref{assump-4.1} also satisfies Assumption~\ref{assum-ssp}. Consequently, the optimality results of Section~\ref{sec-finitessp} and the boundedness and convergence theorems for Q-learning hold under Assumption~\ref{assump-4.1} as well.
However, for proper policies of either players, the dynamic programing operators of their associated single-player problems exhibit a contraction property. This gives us a shortcut to prove the boundedness of Q-learning iterates under Assumption~\ref{assump-4.1}(i), without resorting to the long, general-case proof given earlier.

\smallskip
\begin{proposition}[Boundedness of Q-Learning Iterates in the Presence of Proper Policies] \label{prp-bd-special} 
Suppose there exists a proper policy $\bpla \in \Ra$ of player I ($\bplb \in \Rb$ of player II, respectively) in a finite-space SSP game. Then under Assumption~\ref{assump-qlalg}(i)-(iii) and (v), for any given initial $Q_0$, the sequence $\{Q_t\}$ generated by the Q-learning algorithm (\ref{eq-Q}) is bounded above (below, respectively) w.p.$1$.
\end{proposition}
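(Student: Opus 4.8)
The plan is to exploit the fact that, under the stronger model condition here, the single-player problem \sspa\ introduced in Section~\ref{sec-4.2.1} satisfies more than the SSP Model Assumption: \emph{every} stationary policy of player~I is proper in \sspa. Indeed, if $\bplb\in\Rb$ is a proper policy of player~II, then for every $\mu\in\SD$ the pair $(\mu,\bplb)$ is non-prolonging by Definition~\ref{def-proper-sspgame}, which is exactly the statement that the Markov chain of \sspa\ under $\mu$ reaches the termination state w.p.$1$ from every initial state. By the symmetry of the two players it suffices to treat this case and prove that $\{Q_t\}$ is bounded below w.p.$1$; the upper boundedness when player~I has a proper policy follows by the symmetric argument applied to $\{-Q_t\}$ and player~II's reward-maximization problem. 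As in Section~\ref{sec-4.2.1} I would pass to the companion iteration $\{\hat Q_t\}$ of Eq.~(\ref{eq-bdbl-Q}), which is driven by the same random variables and satisfies $Q_t\ge\hat Q_t$ for all $t$ (Lemma~\ref{lma-bdbl-Q}); it then remains to show that $\{\hat Q_t\}$ is bounded w.p.$1$.

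The key point is that the operator $\hat F:\Re^{|\Stc|}\to\Re^{|\Stc|}$ driving~(\ref{eq-bdbl-Q}), namely $(\hat FQ)(i,u,v)=g(i,u,v)+\sum_{j\in S}p_{ij}(u,v)\,\min_{u'\in U(j)}\sum_{v'\in V(j)}\bplb(v'\mid j)\,Q(j,u',v')$, is not merely monotone and sup-norm nonexpansive (which is all one can say under Assumption~\ref{assum-ssp} alone) but is in fact a contraction with respect to a \emph{weighted} sup-norm. To see this, for each $\mu\in\SD$ introduce the affine operator $\hat F_\mu Q=g+\hat P_\mu Q$, where $\hat P_\mu$ is the substochastic matrix on $\Stc$ with $[\hat P_\mu]_{(i,u,v),(j,u',v')}=p_{ij}(u,v)\,\bplb(v'\mid j)$ when $u'=\mu(j)$ and $0$ otherwise, so that $\hat FQ=\min_{\mu\in\SD}\hat F_\mu Q$ componentwise. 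Iterating $\hat P_\mu$ makes the state coordinate evolve, after the first step, as the Markov chain of \sspa\ under $\mu$, which absorbs at $0$ w.p.$1$ since $\mu$ is proper; hence $\hat P_\mu^{\,k}\to0$ entrywise for every $\mu\in\SD$. Because $\SD$ is finite, a standard argument (the same one underlying the weighted-sup-norm contraction of SSP operators when all policies are proper, cf.~\cite{bet-ssp91} and Patek and Bertsekas~\cite[Lemma~4.1]{sspgame_pt99}) then produces a vector $\xi>0$ on $\Stc$ and a scalar $\beta\in[0,1)$ with $\hat P_\mu\xi\le\beta\xi$ for all $\mu$, so that $\hat F_\mu$ and hence $\hat F$ are $\beta$-contractions in $\|\cdot\|_\xi$.

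With the contraction in hand, the final step writes~(\ref{eq-bdbl-Q}) in the stochastic-approximation form $\hat Q_{t+1}(\ell)=(1-\g_{t,\ell})\hat Q_t(\ell)+\g_{t,\ell}\big((\hat F\hat Q_t^{(\ell)})(\ell)+\hat w_{t,\ell}\big)$ and checks, exactly as in the proof of Theorem~\ref{thm-convql}, that $\hat w_{t,\ell}$ is a martingale difference relative to $\{\F_t\}$ with $\E[\hat w_{t,\ell}^2\mid\F_t]\le A+B\max_{\ell'}\max_{\tau\le t}|\hat Q_\tau(\ell')|^2$. Since $\hat F$ is a weighted-sup-norm contraction it furnishes its own Lyapunov function, so the asynchronous stochastic-approximation theory for contraction mappings (Tsitsiklis~\cite{tsi94}; see also \cite[Chapter~4]{BET}) yields that $\{\hat Q_t\}$ is bounded w.p.$1$ \emph{without} any a priori boundedness hypothesis; moreover, as in the general-case proof of Theorem~\ref{thm-boundql}, all the relevant bounds are phrased in terms of maxima over past iterates, so the (unrestricted) communication delays are absorbed and only Assumption~\ref{assump-qlalg}(i)--(iii) and (v) are needed for boundedness. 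Combining with $Q_t\ge\hat Q_t$ gives the lower boundedness of $\{Q_t\}$, and the symmetric argument gives the upper boundedness under a proper policy of player~I. The main obstacle — and the reason the stronger assumption is genuinely needed for this shortcut rather than Assumption~\ref{assum-ssp} — is the contraction step: it is precisely the passage from ``a proper policy exists'' to ``every policy is proper,'' hence $\hat P_\mu^{\,k}\to0$ for all $\mu$, that upgrades the merely nonexpansive $\hat F$ to a contraction.
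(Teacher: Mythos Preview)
Your proposal is correct and follows essentially the same route as the paper: reduce to the companion process $\{\hat Q_t\}$ via Lemma~\ref{lma-bdbl-Q}, show that the associated operator (the paper's $F_{\bplb}$, your $\hat F$) is a weighted sup-norm contraction, and then invoke Tsitsiklis' asynchronous contraction theorem to get boundedness without an a priori bound. The only cosmetic difference is in the contraction step: the paper constructs the weight vector $\xi$ explicitly from the optimal costs of an auxiliary SSP on $\Ste\cup\Stc$ with all one-stage costs $-1$ (its Lemma~\ref{lma-contraction}), whereas you argue via $\hat P_\mu^{\,k}\to 0$ for every $\mu$ and cite the standard construction; both rest on the same observation that properness of $\bplb$ makes \emph{every} stationary policy proper in \sspa, so the two arguments are equivalent.
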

\smallskip

We will prove the lower-boundedness part of Prop.~\ref{prp-bd-special}. By symmetry, the upper-boundedness part of Prop.~\ref{prp-bd-special} follows from applying the same argument to the process $\{ - Q_t \}$.
We start with a mapping $F_{\bplb} : \Re^{|\Stc|} \to \Re^{|\Stc|}$ for a policy $\bplb \in \Rb$ defined by
\begin{equation} \label{eq-def-hatF}
  (F_{\bplb} Q)(i,u,v) := g(i,u,v) +  \sum_{j \in \St} p_{ij}(u,v)  \inf_{\tilde u \in U(j)}  \ave Q \big(j,\tilde u, \bplb_j  \big), \qquad \forall \, (i,u,v) \in \Stc, \ \forall \, Q,
\end{equation}
where 
$$\bplb_j = \bplb(\cdot \mid j) \qquad \text{and} \qquad \ave Q \big(j, \tilde u, \bplb_j  \big) = \sum_{\tilde v \in V(j)} \bplb_j(\tilde v) \, Q (j, \tilde u, \tilde v).$$
Given a positive vector $\xi = \{ \xi(i,u,v) \mid (i,u,v) \in \Stc\}$, let  $\| \cdot \|_\xi$ denote the weighted sup-norm on the space of Q-factors given by $\| Q \|_\xi = \max_{(i,u,v) \in \Stc} \frac{ | Q(i,u,v) |}{\xi(i,u,v)}$.

\smallskip
\begin{lemma} \label{lma-contraction}
For a proper policy $\bplb \in \Rb$, $F_{\bplb}$ given by Eq.~(\ref{eq-def-hatF}) is a contraction with respect to some weighted sup-norm $\|\cdot\|_\xi$, i.e., for some $\beta \in [0,1)$,
$$  \| F_{\bplb} Q  - F_{\bplb} Q'  \|_\xi \leq \beta \, \| Q - Q' \|_\xi, \qquad \forall \, Q, Q'.$$
\end{lemma}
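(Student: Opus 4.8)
The plan is to deduce the stated contraction from the classical weighted sup-norm contraction property of finite single-player SSP problems in which \emph{every} stationary policy is proper, applied not to \sspa\ itself but to an ``expanded'' single-player SSP whose states make the Q-factor triples $(i,u,v)\in\Stc$ explicit. The first thing I would record is the consequence of $\bplb$ being proper: by Definition~\ref{def-proper-sspgame}, $(\pla,\bplb)$ is non-prolonging for every $\pla\in\Ra$, and when $\pla$ is stationary deterministic the state process on $\Ste$ under $(\pla,\bplb)$ coincides with the Markov chain of \sspa\ under $\pla$ [cf.\ Eqs.~(\ref{eq-sspa-transprob})--(\ref{eq-sspa-transcost})]; hence every stationary deterministic policy of player~I is proper in \sspa\ in the sense of Definition~\ref{def-proper-ssp}. (This is strictly stronger than what Lemma~\ref{lma-sspa} asserts, and it is exactly what the contraction argument needs.)

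Next I would introduce an auxiliary single-player SSP, say $\widehat{\text{SSP}}$, with state space $\St\cup\Stc\cup\{0\}$ and $0$ a cost-free absorbing state, defined so that: from a triple state $(i,u,v)\in\Stc$ there is no genuine control and the system moves to $k\in\Ste$ with probability $p_{ik}(u,v)$; from a state $j\in\St$ the control set is $U(j)$, and under control $\tilde u$ the system moves to the triple $(j,\tilde u,\tilde v)\in\Stc$ with probability $\bplb_j(\tilde v)$, $\tilde v\in V(j)$. Under any stationary deterministic policy $\mu$ of $\widehat{\text{SSP}}$ (which only chooses a control at each state of $\St$), every triple state reaches $\Ste$ in one step, and the two-step transition kernel of the induced $\St$-valued skeleton is exactly $p_{\bplb,\cdot}(\mu(\cdot))$, i.e.\ the kernel of \sspa\ under $\mu$, which reaches $0$ w.p.$1$ by the previous paragraph; hence \emph{every} stationary deterministic policy of $\widehat{\text{SSP}}$ is proper. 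By the classical theory of single-player SSP problems in which all stationary policies are proper (Bertsekas and Tsitsiklis~\cite{bet-ssp91}; see also \cite{tsi94} and \cite[Chapter~4]{BET}), there then exist a positive vector $\xi$ on the non-termination states $\St\cup\Stc$ of $\widehat{\text{SSP}}$ and a scalar $\beta\in[0,1)$ such that the one-step expectation of $\xi$ contracts by the factor $\beta$ at every state--control pair; written out for the two kinds of states this reads
\begin{align}
 \sum_{\tilde v\in V(j)}\bplb_j(\tilde v)\,\xi(j,\tilde u,\tilde v) &\le \beta\,\xi(j), \qquad \forall\, j\in\St,\ \tilde u\in U(j), \label{prop-xi-1}\\
 \sum_{k\in\St}p_{ik}(u,v)\,\xi(k) &\le \beta\,\xi(i,u,v), \qquad \forall\, (i,u,v)\in\Stc, \label{prop-xi-2}
\end{align}
the term $k=0$ in (\ref{prop-xi-2}) being omitted since $\xi$ is not needed at the termination state.

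Finally I would show that $F_{\bplb}$ is a contraction for the weighted sup-norm $\|\cdot\|_\xi$ given by the restriction of $\xi$ to $\Stc$. Fixing $Q,Q'$ and $(i,u,v)\in\Stc$, I would bound $|(F_{\bplb}Q)(i,u,v)-(F_{\bplb}Q')(i,u,v)|$ using the definition~(\ref{eq-def-hatF}) of $F_{\bplb}$, then the elementary inequality $|\inf_a f(a)-\inf_a g(a)|\le\max_a|f(a)-g(a)|$ over the finite set $U(k)$, and then $|\ave Q(k,\tilde u,\bplb_k)-\ave Q'(k,\tilde u,\bplb_k)|\le\sum_{\tilde v}\bplb_k(\tilde v)\,|Q(k,\tilde u,\tilde v)-Q'(k,\tilde u,\tilde v)|$, to arrive at
$$\sum_{k\in\St}p_{ik}(u,v)\,\Big(\max_{\tilde u\in U(k)}\ \sum_{\tilde v\in V(k)}\bplb_k(\tilde v)\,\xi(k,\tilde u,\tilde v)\Big)\,\|Q-Q'\|_\xi \ \le\ \sum_{k\in\St}p_{ik}(u,v)\,\beta\,\xi(k)\,\|Q-Q'\|_\xi \ \le\ \beta^2\,\xi(i,u,v)\,\|Q-Q'\|_\xi,$$
the two inequalities coming from (\ref{prop-xi-1}) and (\ref{prop-xi-2}) respectively. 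Dividing by $\xi(i,u,v)$ and maximizing over $(i,u,v)\in\Stc$ gives $\|F_{\bplb}Q-F_{\bplb}Q'\|_\xi\le\beta^2\|Q-Q'\|_\xi$, which proves the lemma with modulus $\beta^2$, hence certainly with some $\beta\in[0,1)$.

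I expect the substantive step to be the construction in the second paragraph. One cannot take the naive weight $\xi(i,u,v)=v(i)$ built from a value-function-type vector $v$ on $\St$, because a \emph{deterministic} choice of player~II's control can keep the chain inside $\St$ with certainty even though the randomized policy $\bplb$ is proper; thus a triple weight must already ``price in'' one transition step --- precisely the role of the expanded state space $\St\cup\Stc$ and of inequalities (\ref{prop-xi-1})--(\ref{prop-xi-2}). An alternative, shorter route is to invoke the contraction property proved for such games in Patek and Bertsekas~\cite[Lemma~4.1]{sspgame_pt99}, whose notion of proper policy agrees with Definition~\ref{def-proper-sspgame}, and transfer it to player~II's proper policy $\bplb$ by the symmetry of the construction.
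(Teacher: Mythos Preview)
Your proposal is correct and follows essentially the same approach as the paper's own proof. Both arguments exploit the key observation that when $\bplb$ is proper in the sense of Definition~\ref{def-proper-sspgame}, \emph{every} stationary policy of player~I is proper in the associated single-player problem, and both then extract a weight vector $\xi$ from an augmented SSP on $\Ste\cup\Stc$ via the classical ``all policies proper $\Rightarrow$ weighted sup-norm contraction'' mechanism of~\cite{bet-ssp91,BET}.

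The execution differs only in packaging. The paper uses \sspa\ itself (Definition~\ref{def-sspa}), with transitions $\St\to\Ste$ under the $\bplb$-averaged kernel, puts one-stage costs $-1$, and computes $\xi(i,u,v)=-\hat J^*(i,u,v)$ explicitly from the Bellman equation; this yields a single inequality $\sum_j p_{ij}(u,v)\sup_{\tilde u}\xi_{\bplb}(j,\tilde u)\le\beta\,\xi(i,u,v)$ and contraction modulus $\beta$. Your $\widehat{\text{SSP}}$ instead routes $\St\to\Stc$ (making the randomization over $\tilde v$ an explicit state transition), invokes the general contraction result as a black box to obtain the pair of inequalities (\ref{prop-xi-1})--(\ref{prop-xi-2}), and combines them for modulus $\beta^2$. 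Your version is slightly more modular; the paper's is slightly more self-contained. Neither buys anything substantive over the other, and your closing remark that one could simply cite~\cite[Lemma~4.1]{sspgame_pt99} is also what the paper acknowledges in its opening sentence of the proof.
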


\begin{proof}
First, we define a single-player SSP problem and use its optimal total costs to construct the weight vector $\xi$ in the desired norm $\|\cdot\|_\xi$, similar to the proofs of \cite[Prop.~2.2, p.\ 23-24]{BET} and \cite[Lemma~4.1]{sspgame_pt99}.
Consider a single-player SSP problem on the state space $\Ste \cup \Stc$ where the system dynamics are the same as those of \sspa\ given in Definition~\ref{def-sspa}, and all the one-stage costs are $-1$ except for that at the cost-free termination state $0$. 
Because $\bplb$ is a proper policy of player II, by the definition of a proper policy in an SSP game (cf.\ Definition~\ref{def-proper-sspgame}),
the single-player SSP problem just defined satisfies the SSP Model Assumption (cf.\ Section~\ref{sec-sspgame-model}). 
Therefore, by \cite{bet-ssp91}, its optimal total cost function $\hat J^*$ is finite at all states in $\Ste \cup \Stc$ and satisfies the dynamic programming equation
\begin{align}
 \hat J^*(i,u,v) & = -1 + \sum_{j \in \St} p_{ij}(u,v)\, \hat J^*(j),  \qquad \forall \, (i, u, v)  \in \Stc,  \label{eq-prf-bdabv1} \\
  \hat J^*(i) & = -1 + \min_{u \in U(i)} \sum_{j \in \St} p_{\bplb,ij}(u) \, \hat J^*(j),  \qquad \forall \, i \in \St.  \label{eq-prf-bdabv2}
\end{align}
We also have that $\hat J^*(i) \leq -1$ and $\hat J^*(i,u,v) \leq -1$ for every state $i$ and $(i, u,v)$, since the one-stage costs before termination are $-1$.

Let us define 
\begin{align*}
    \xi(i,u,v)  & =  - \hat J^*(i, u,v) \geq 1, \qquad \quad \ \ (i,u,v) \in \Stc, \\
    \xi_{\bplb}(i, u) & =  \sum_{v \in V(i)} \bplb_i(v) \, \xi(i,u,v), \qquad i \in \St, \ u \in U(i).
\end{align*}    
For every $i \in \St$ and $u \in U(i)$,
by Eqs.~(\ref{eq-prf-bdabv1})-(\ref{eq-prf-bdabv2}) and the definition of $p_{\bplb,ij}$ [cf. Definition~\ref{def-sspa}(2)],
\begin{align*}
 \xi_{\bplb}(i, u) & =  - \Big( -1 +  \sum_{j \in \St} p_{\bplb,ij}(u) \, \hat J^*(j) \Big)  \\
     & \leq - \Big( -1 + \min_{\tilde u \in U(i)} \sum_{j \in \St} p_{\bplb,ij}(  \tilde u) \, \hat J^*(j) \Big) 
     = - \hat J^*(i),
\end{align*}
and hence
\begin{equation} 
    \sup_{u \in U(i)}   \xi_{\bplb}(i,  u) \leq - \hat J^*(i), \qquad \forall \, i \in \St.  \notag
\end{equation}    
Then with $\beta = \max_{(i, u, v) \in \Stc} \frac{\xi(i,u,v) - 1}{\xi(i,u,v)} \in [0,1)$. 
we have for every $(i,u,v) \in \Stc$,
\begin{equation} \label{eq-prf-bdabv3}
     \sum_{j \in \St} p_{ij}(u,v)  \sup_{\tilde u \in U(j)} \xi_{\bplb}(j,  \tilde u)   \leq  \sum_{j \in \St} p_{ij}(u,v) \big( - \hat J^*(j) \big) 
         =  \xi(i,u,v) - 1 
         \leq  \beta \, \xi(i,u,v),
\end{equation}
where the equality follows from Eq.~(\ref{eq-prf-bdabv1}).

We now prove that $F_{\bplb}$ is a contraction with respect to $\|\cdot\|_\xi$ and with modulus $\beta$.
By the definition of $F_{\bplb}$ [cf.\ Eq.~(\ref{eq-def-hatF})], for every $(i,u,v) \in \Stc$,
$$\big| (F_{\bplb} Q)(i,u,v) - (F_{\bplb} Q')(i,u,v) \big| =  \Big| \sum_{j \in \St} p_{ij}(u,v) \Big( \inf_{\tilde u \in U(j)} Q_{\bplb}(j, \tilde u) -   \inf_{\tilde u \in U(j)} Q'_{\bplb}(j, \tilde u) \Big) \Big|,$$
where we define $Q_{\bplb}(j, \tilde u) = \sum_{\tilde v \in V(j)} \bplb_j(\tilde v) \, Q(j, \tilde u, \tilde v)$ and we define $Q'_{\bplb}(j, \tilde u)$ similarly.
Let $\Delta = | Q - Q' |$ (the absolute values are taken component-wise). Using the preceding equation, we have for every $(i,u,v) \in \Stc$,
\begin{align*}
\big| (F_{\bplb} Q)(i,u,v) - (F_{\bplb} Q')(i,u,v) \big| 
& \leq \sum_{j \in \St} p_{ij}(u,v) \sup_{\tilde u \in U(j)} \sum_{\tilde v \in V(j)} \bplb_j(\tilde v) \, \Delta(j, \tilde u, \tilde v) \\  
 & =  \sum_{j \in \St} p_{ij}(u,v) \sup_{\tilde u \in U(j)} \sum_{\tilde v \in V(j)} \bplb_j(\tilde v)\, \xi(j, \tilde u, \tilde v) \cdot  \frac{\Delta(j, \tilde u, \tilde v)}{ \xi(j, \tilde u, \tilde v)} \\ 
 & \leq  \| Q - Q' \|_\xi \cdot \sum_{j \in \St} p_{ij}(u,v) \sup_{\tilde u \in U(j)} \xi_{\bplb}(j,  \tilde u) \\
 & \leq  \| Q - Q' \|_\xi \cdot \beta \, \xi(i,u,v),
\end{align*}     
where the last inequality follows from Eq.~(\ref{eq-prf-bdabv3}). This implies
$\| F_{\bplb} Q - F_{\bplb} Q' \|_\xi \leq \beta \| Q - Q' \|_\xi$.
\end{proof}
\smallskip

\begin{proof}[Proof of Prop.~\ref{prp-bd-special}]
We prove the lower-boundedness part of the proposition; as mentioned earlier, the upper-boundedness part follows from applying the same argument to the process $\{ - Q_t \}$. 

Consider the process $\{{\hat Q}_t\}$ defined by the iteration~(\ref{eq-bdbl-Q}) with $\bplb$ being a proper policy of player~II. 
By Lemma~\ref{lma-bdbl-Q}, to prove that the sequence $\{Q_t\}$ of Q-learning iterates is bounded below w.p.$1$, it is sufficient to prove that $\{ {\hat Q}_t\}$ is bounded below w.p.$1$. 
Now the iteration~(\ref{eq-bdbl-Q}) for $\{\hat Q_t\}$ can be equivalently written as: for every $\ell = (i,u,v) \in \Stc$ and $t \geq 0$,
$$  {\hat Q}_{t+1}(i,u,v) = ( 1 - \g_{t, \ell} ) {\hat Q}_t(i,u,v) + \g_{t, \ell} \big( F_{\bplb} {\hat Q}^{(\ell)}_t \big)(i,u,v) + \g_{t, \ell} \, w_{t, \ell}, $$
where $F_{\bplb}$ is the mapping given by~(\ref{eq-def-hatF}) and $w_{t, \ell}$ is a noise term given by
$$ w_{t, \ell} = \hat g(i, u, v, s) +  \inf_{\da \in \bar U(s)}  {\ave {\hat Q}}^{(\ell)}_t (s, \da, \bplb_s ) - \big( F_{\bplb}  {\hat Q}^{(\ell)}_t \big)(i,u,v) \qquad \text{with} \   s = j_t^\ell. $$
By Lemma~\ref{lma-contraction}, $F_{\bplb}$ is a contraction with respect to a weighted sup-norm, so we can apply the result of \cite{tsi94} for asynchronous stochastic approximation algorithms involving contraction mappings.
Direct calculation shows that for any given $\hat Q_0$, under Assumption~(\ref{assump-qlalg})(i)-(iii),  the noise terms, $w_{t, \ell}, \ell \in \Stc, t \geq 0$, satisfy the conditional mean and variance conditions required in the analysis of \cite{tsi94}:
$\E \big[ w_{t,\ell} \mid \mathcal{F}_t \big]  = 0$ w.p.$1$,
and  $\E \big[ w_{t,\ell}^2 \mid \mathcal{F}_t \big]  \leq A + B \max_{\ell' \in \Stc} \max_{\tau \leq t}  | \hat Q_{\tau}(\ell') |^2$ w.p.$1$, for some deterministic constants $A$ and $B$. Therefore, by \cite[Theorem 1]{tsi94}, for any given $\hat Q_0 = Q_0$, $\{{\hat Q}_t\}$ is bounded  w.p.$1$ under Assumption~(\ref{assump-qlalg})(i)-(iii) and (v). [Assumption~(\ref{assump-qlalg})(iv) is not needed for bounding the iterates, although it is needed for establishing their convergence.]
As mentioned earlier, by Lemma~\ref{lma-bdbl-Q}, this implies that for any given initial $Q_0$, $\{Q_t\}$ is bounded below w.p.$1$.
\end{proof}

\section*{Acknowledgements}
I thank Prof.\ Dimitri Bertsekas and Prof.\ John Tsitsiklis for helpful comments. 
This research was supported by the Air Force Grant FA9550-10-1-0412.

\addcontentsline{toc}{section}{References} 
\bibliographystyle{amsalpha} 
\let\oldbibliography\thebibliography
\renewcommand{\thebibliography}[1]{%
  \oldbibliography{#1}%
  \setlength{\itemsep}{0pt}%
}
{\fontsize{9}{11} \selectfont
\bibliography{qlearning_sspgame_bib}}


\appendix

\newcommand{\appsection}[1]{\let\oldthesection\thesection
  \renewcommand{\thesection}{Appendix \oldthesection}
  \section{#1}\let\thesection\oldthesection}

\newpage

\newcommand{\nocontentsline}[3]{}
\newcommand{\tocless}[2]{\bgroup\let\addcontentsline=\nocontentsline#1{#2}\egroup}

\addcontentsline{toc}{section}{Appendix A \ Proof of Lemma~\ref{lma-tQ}} 
\tocless\appsection{Proof of Lemma~\ref{lma-tQ}} \label{app-sec1}

The proof is by induction on $t$. For $t = t_0$, $\tilde Q_{t_0}$ satisfies the properties (a)-(d) in the lemma by its definition and our choice of the sample path and $t_0$ [cf.\ Lemma~\ref{lma-init-tQ} and Eqs.~(\ref{eq-Pt0-a})-(\ref{eq-tQ0})].
Since $\tilde Q_t = \tilde Q_{t_0}$ for $t < t_0$, they also satisfy properties (a)-(d).
So consider $t  \geq t_0$ and suppose these properties are satisfied by all $\tilde Q_\tau$, $0 \leq \tau \leq t$. Let us show that they are satisfied by $\tilde Q_{t+1}$.

Consider $\tilde Q_{t+1}(i,u,v)$ for each $\ell=(i,u,v) \in \Stc$. To simplify notation, denote $\gamma = \gamma_{t,\ell} \in [0,1]$ (cf.\ Lemma~\ref{lma-init-tQ}).
By Eq.~(\ref{eq-tQ}),
\begin{equation} \label{eq-prf-tQ}
 \tilde Q_{t+1}(i,u,v) =  ( 1 - \gamma )\, \tilde Q_t(i,u,v) +  \gamma \, \Big( \hat g(i, u,v,s) +   {\ave {\tilde Q}}^{(\ell)}_t(s, \tilde u , \bplb_s) \Big),
\end{equation} 
 where $s = j^{\ell}_t, \tilde u = u^{\ell}_t$,  ${\ave {\tilde Q}}^{(\ell)}_t(s, \tilde u , \bplb_s) = \sum_{\tilde v \in V(s)}  \bplb_s(\tilde v) \, {\tilde Q}^{(\ell)}_t(s, \tilde u, \tilde v)$, and
 $$  {\tilde Q}^{(\ell)}_t(s, \tilde u, \tilde v) =  {\tilde Q}_{\tau_{\ell \ell_{\tilde v}}(t)}(s, \tilde u, \tilde v) \quad \text{with} \   \ell_{\tilde v} = (s, \tilde u, \tilde v), \  \tau_{\ell \ell_{\tilde v}}(t) \leq t, \quad
 \forall \,  \tilde v \in V(s). $$
Let us use the simplified notation $\tau_{\tilde v} = \tau_{\ell \ell_{\tilde v}}(t)$ for $\tilde v \in V(s)$. By the induction hypothesis, we can express $\tilde Q_t(i,u,v)$ and each term ${\tilde Q}_{\tau_{\ell \ell_{\tilde v}}(t)}(s, \tilde u, \tilde v), \tilde v \in V(s)$, for $s \not = 0$, in the form given in the statement (a) of the lemma. 
Thus when $s \not= 0$, we can write Eq.~(\ref{eq-prf-tQ}) as
\begin{align}
\tilde Q_{t+1}(i,u,v) & =  \,  ( 1 - \gamma) \, g^{\ell,t}_0(i,u,v) + ( 1 - \gamma) \, \sum_{k=1}^\infty  \E^{ \mbf{P}^{\ell}_{t}} \Big[ \,  g^{\ell,t}_k(i_k, u_k) \, 
\Big]  \notag \\
 & \quad \ + \gamma \,  \hat g(i, u, v, s)  + \gamma \, \sum_{\tilde v \in V(s)} \bplb_s(\tilde v) \cdot \left( g^{\ell_{\tilde v},\tau_{\tilde v}}_0(s, \tilde u, \tilde v)  +  \sum_{k=1}^\infty  \E^{ \mbf{P}^{\ell_{\tilde v}}_{\tau_{\tilde v}}} \Big[ \,  g^{\ell_{\tilde v},\tau_{\tilde v}}_k(i_k, u_k) \,  
 \Big]  \right) \notag \\
 & =  \ \sum_{k = 0}^\infty C_k \label{eq-tQexp0}
 \end{align}
 where
 \begin{align}
  C_0 & =  ( 1 - \gamma )  \,  g^{\ell,t}_0(i, u, v)  +  \gamma \,   \hat g(i, u, v, s),   \\
  C_1 & =  ( 1 - \gamma ) \,   \E^{ \mbf{P}^{\ell}_{t}} \Big[ \,  g^{\ell,t}_1(i_1, u_1) \, \Big]   
  + \gamma \,   \sum_{\tilde v \in V(s)} \bplb_s(\tilde v) \cdot g^{\ell_{\tilde v},\tau_{\tilde v}}_0(s, \tilde u, \tilde v),   \label{eq-tQexpc1} \\
  C_k & = ( 1 - \gamma ) \,   \E^{ \mbf{P}^{\ell}_{t}} \Big[ \,  g^{\ell,t}_k(i_k, u_k) \, \Big]   
 + \gamma \,   \sum_{\tilde v \in V(s)} \bplb_s(\tilde v) \cdot  \E^{ \mbf{P}^{\ell_{\tilde v}}_{\tau_{\tilde v}}} \Big[ \,  g^{\ell_{\tilde v}, \tau_{\tilde v}}_{k-1}(i_{k-1}, u_{k-1}) \,  
 \Big], \quad k \geq 2. \label{eq-tQexp2}
\end{align} 

For the sake of convenience, let us define a few terms for the case $s=0$ and make the above formulas valid for $s = 0$ as well. 
Recall that for $s=0$, we have $U(0)=V(0)=\{0\}$ and with $\tilde v = 0$, $\ell_{\tilde v} = (0,0,0)$ and ${\tilde Q}_{\tau}(s, \tilde u, \tilde v) = \tilde Q_\tau(0,0,0) = 0$ for all $\tau$. Let us set $\tau_{\tilde v} = 0$ in this case (since this term can be defined arbitrarily). To express $0$ in the form given in the lemma, 
let us simply define $\mbf{P}^{\ell_{\tilde v}}_{\tau_{\tilde v}}$ in this case to be the probability distribution of the Markov chain $\{(i_k,u_k), k \geq 0\}$ that starts from the absorbing termination state $(i_0, u_0) = (0,0)$; let $g^{\ell_{\tilde v},\tau_{\tilde v}}_0(0,0,0) = 0$ and let $g^{\ell_{\tilde v},\tau_{\tilde v}}_0$ coincide with $g$ elsewhere; and let $g^{\ell_{\tilde v},\tau_{\tilde v}}_k = g_{\bplb}$, $k \geq 1$. With these definitions, we have $0 = g^{\ell_{\tilde v},\tau_{\tilde v}}_0(s, \tilde u, \tilde v)  +  \sum_{k=1}^\infty  \E^{ \mbf{P}^{\ell_{\tilde v}}_{\tau_{\tilde v}}} \Big[ \,  g^{\ell_{\tilde v},\tau_{\tilde v}}_k(i_k, u_k) \, \Big]$ and  Eq.~(\ref{eq-tQexp0}) holds for $s = 0$. 
For later use, let us also define transition probabilities and other quantities so that some properties in the statement of the lemma hold for $s=0$. In particular, let $\mu_k^{\ell_{\tilde v}, \tau_{\tilde v}} = \mu$, $p_k^{\ell_{\tilde v}, \tau_{\tilde v}}(\cdot \mid \bar i, \bar u) = \mbf{p}^{\bar i \bar u}_{\bplb}$ for $k \geq 1$ and $(\bar i, \bar u) \in \Stce^1$, and also let $p_0^{\ell_{\tilde v}, \tau_{\tilde v}}(\cdot \mid 0,0,0) = \mbf{p}^{00}_{\bplb}$ (i.e., $p_0^{\ell_{\tilde v}, \tau_{\tilde v}}(0 \mid 0,0,0)=1$).
Then $\mbf{P}^{\ell_{\tilde v}}_{\tau_{\tilde v}}$ can be expressed in the product form given in property (b), and it satisfies property (d) with $k_{\tau_{\tilde v}} = 1$.

We now rewrite each term $C_k$ in the above expression of $\tilde Q_{t+1}(i,u,v)$ in a desirable form, first for $k = 0$, then for $k \geq 2$, and finally, for $k = 1$. 
During this procedure, we will define the transition probabilities $p^{\ell,t+1}_k$ and $\mu^{\ell,t+1}_k$ that compose the probability distribution $\mbf{P}^{\ell}_{t+1}$ of the time-inhomogenous Markov chain for $t+1$, as well as the one-stage cost functions $g^{\ell, t+1}_k$ required in the statement of the lemma.

\medskip
\noindent {\bf For $k=0$: }
By property (c) of the induction hypothesis, $g^{\ell,t}_0(i,u,v) = \G_t(i,u,v)$. Using this and the definition of $\{\G_t\}$ [cf.\ Eq.~(\ref{eq-definitg})], we have that 
\begin{equation} \label{eq-prftQ-1}
  C_0 =  ( 1 - \gamma )  \,  \G_t(i, u, v)  +  \gamma \,   \hat g(i, u, v, s)  = \G_{t+1}(i,u,v).
\end{equation} 
Let the cost function $g^{\ell,t+1}_0$ and transition probability $p^{\ell,t+1}_0(\cdot \mid i, u,v)$ be
\begin{equation} \label{eq-prf-def-gp0}
g^{\ell,t+1}_0 = \G_{t+1}, \qquad p^{\ell,t+1}_0(\cdot \mid i,u,v) = \mbf{q}_{t+1}^{\ell}.
\end{equation}
By Lemma~\ref{lma-init-tQ} and our choice of the sample path, $g^{\ell,t+1}_0$ and $p^{\ell,t+1}_0$ satisfy the requirements in properties (b) and (c), that is,
$$ g^{\ell,t+1}_0 \in \B_\delta (g), \qquad   p^{\ell,t+1}_0(\cdot \mid i, u,v)  \in \A_\delta\big(\mbf{p}^{\ell}_o \big).  $$

\medskip
\noindent {\bf For $k \geq 2$: }
Let $P^k_1$ denote the law of $(i_k, u_k, i_{k+1})$ under $\mbf{P}^{\ell}_{t}$, and for each $\tilde v \in V(s)$, let $P^{k,\tilde v}_2$ denote the law of $(i_{k-1}, u_{k-1}, i_{k})$ under  $\mbf{P}^{\ell_{\tilde v}}_{\tau_{\tilde v}}$. 
Let $P_3^k$ denote the convex combination of them, 
\begin{equation} \label{eq-def-pk3}
  P^k_3 = ( 1 - \gamma) P^k_1 + \gamma \, \sum_{\tilde v \in V(s)} \bplb_s(\tilde v) \cdot P^{k, \tilde v}_2.
\end{equation}  
We regard these laws as probability measures on the sample space $\tilde \Omega = \Ste \times \U \times \Ste$, and we denote by $X, Y$ and $Z$ the function that maps a point $\omega = (\bar i, \bar u, \bar j) \in \tilde \Omega$ to its $1$st, $2$nd and $3$rd coordinate, respectively. 
Using property (b) of  $\mbf{P}^{\ell}_{t}$ and $\mbf{P}^{\ell_{\tilde v}}_{\tau_{\tilde v}}$ from the induction hypothesis (in particular, using the property of $\{\mu_k^{\ell,t}, k \geq 1\}$, $\{\mu_k^{\ell_{\tilde v}, \tau_{\tilde v}}, k \geq 1 \}$),  it is clear that $\supp(P^k_3)  \subset \Stce^1 \times \Ste$, a subset of $\tilde \Omega$.
So we can write the term $C_k$ in Eq.~(\ref{eq-tQexp2}) for each $k \geq 2$ as
\begin{equation} 
   \sum_{\bar i \in \Ste} \sum_{  \bar u \in U(\bar i)} \, \Big( \, ( 1 - \gamma ) \, P^k_1(X = \bar i, Y = \bar u) \cdot  g^{\ell,t}_k( \bar i, \bar u) +  \gamma  \, \sum_{\tilde v \in V(s)} \bplb_s(\tilde v) \cdot P^{k, \tilde v}_2(X = \bar i, Y = \bar u) \cdot g^{\ell_{\tilde v}, \tau_{\tilde v}}_{k-1}( \bar i, \bar u) \, \Big). \notag
\end{equation}   

Next we will define the $k$th-stage cost function $g^{\ell, t+1}_k$ so that  we can rewrite the above expression of $C_k$ equivalently as
\begin{equation} \label{eq-prf-exptQk1}
   C_k = \sum_{\bar i \in \Ste} \sum_{  \bar u \in U(\bar i)}  \, P^k_3(X = \bar i, Y = \bar u) \,\cdot g^{\ell, t+1}_k(\bar i, \bar u).
\end{equation}  
We will also define the transition probabilities $\mu^{\ell, t+1}_k(\cdot \mid \bar i)$ and $p^{\ell,t+1}_k(\cdot \mid \bar i, \bar u)$ for all $(\bar i, \bar u) \in \Stce^1$ so that we have
for every $(\bar i, \bar u) \in \Stce^1$ and  $\bar j \in \Ste$, 
\begin{align}
P^k_3(X = \bar i, Y = \bar u) & = P^k_3(X = \bar i) \cdot \mu^{\ell, t+1}_k(\bar u \mid \bar i),  \label{eq-prf-nu} \\
P^k_3(X = \bar i, Y = \bar u, Z = \bar j ) & = P^k_3(X = \bar i, Y = \bar u) \cdot p^{\ell,t+1}_k(\bar j \mid\bar i, \bar u). \label{eq-prf-tp} 
\end{align} 

We define the cost function $g^{\ell, t+1}_k$ as follows. For each $(\bar i, \bar u) \in \Stce^1$, if $P^k_3(X = \bar i, Y = \bar u)  = 0$, let $g^{\ell, t+1}_k(\bar i, \bar u) = g_{\bplb}(\bar i, \bar u)$; otherwise, let
\begin{equation} \label{eq-defg}
 g^{\ell, t+1}_k(\bar i, \bar u) = \frac{( 1 - \gamma ) \, P^k_1(X = \bar i, Y = \bar u)}{P^k_3(X = \bar i, Y = \bar u)} \cdot g^{\ell,t}_k( \bar i, \bar u) +   \sum_{\tilde v \in V(s)} \frac{ \gamma  \, \bplb_s(\tilde v) \, P^{k, \tilde v}_2(X = \bar i, Y = \bar u)}{P^k_3(X = \bar i, Y = \bar u)}  \cdot g^{\ell_{\tilde v}, \tau_{\tilde v}}_{k-1}( \bar i, \bar u).
\end{equation}
Then, by the definition of $P^k_3$ [cf.\ Eq.~(\ref{eq-def-pk3})], Eq.~(\ref{eq-prf-exptQk1}) clearly holds.
Observe from Eq.~(\ref{eq-defg}) that $g^{\ell, t+1}_k(\bar i, \bar u)$ is a convex combination of $g^{\ell,t}_k( \bar i, \bar u)$ and $g^{\ell_{\tilde v}, \tau_{\tilde v}}_{k-1}( \bar i, \bar u), \tilde v \in V(s)$. The latter terms, by property (c) of the induction hypothesis and by the definitions we gave for the case $s=0$, all lie in the $\delta$-neighborhood of $g_{\bplb}(\bar i, \bar u)$, and they all equal $g_{\bplb}(\bar i, \bar u) = 0$ if $(\bar i, \bar u) = (0,0)$.  Hence, when $P^k_3(X = \bar i, Y = \bar u)  > 0$ and $g^{\ell, t+1}_k(\bar i, \bar u)$ is given by Eq.~(\ref{eq-defg}), $\big| g^{\ell, t+1}_k(\bar i, \bar u) - g_{\bplb}(\bar i, \bar u) \big| \leq \delta$, and $g^{\ell, t+1}_k(\bar i, \bar u) = 0 $ if $(\bar i, \bar u) = (0,0)$.  This shows that $g^{\ell, t+1}_k$ satisfies the requirement in property (c) for $t+1$:
$g^{\ell, t+1}_k \in \B_\delta(g_{\bplb}).$

Reasoning similarly, since by property (d) of the induction hypothesis, when $k \geq  k_t$ and $k \geq \max_{\tilde v \in V(s)} k_{\tau_{\tilde v}} + 1$, $g^{\ell,t}_k( \bar i, \bar u)= g^{\ell_{\tilde v}, \tau_{\tilde v}}_{k-1}( \bar i, \bar u) = g_{\bplb}(\bar i, \bar u)$ for all $\tilde v \in V(s)$ and $(\bar i, \bar u) \in \Stce^1$, it follows that $g^{\ell, t+1}_k$ satisfies the requirement in property (d) for $t+1$:
\begin{equation} 
  g^{\ell, t+1}_k = g_{\bplb}, \qquad \forall \, k \geq k_{t+1} := \max \Big\{ k_t,  \max_{\tilde v \in V(s) } k_{\tau_{\tilde v}} + 1 \Big\}. \notag
\end{equation}

Define the transition probability distributions $\mu^{\ell, t+1}_k$ and  $p^{\ell,t+1}_k$ by
\begin{align} 
     \mu^{\ell, t+1}_k(\cdot \mid \bar i) & = P^k_3(Y = \cdot \mid X = \bar i), \qquad
 \forall \, \bar i \in \Ste,  \label{eq-prf-deftrans1}\\
 p^{\ell,t+1}_k(\cdot \mid \bar i, \bar u) & = P^k_3(Z = \cdot \mid X = \bar i, Y = \bar u), \qquad \forall \, (\bar i, \bar u) \in \Stce^1.  \label{eq-prf-deftrans2}
\end{align} 
In the right-hand sides of Eqs.~(\ref{eq-prf-deftrans1})-(\ref{eq-prf-deftrans2}), in case an event that is conditioned on has probability zero, the corresponding conditional probability, which can be defined arbitrarily, is defined according to the proper policy $\mu$ or the transition probabilities of \sspa\ as:
 \begin{align*}
   P^k_3(Y = \cdot \mid X = \bar i) & = \mu( \cdot \mid \bar i),  &&  \text{if} \ \ P^k_3(X = \bar i) = 0;  \\
 P^k_3(Z = \cdot \mid X = \bar i, Y = \bar u) & = \mbf{p}^{\bar i\bar u}_{\bplb}, &&  \text{if} \ \ P^k_3(X = \bar i, Y = \bar u) = 0.
 \end{align*}
The desired equalities~(\ref{eq-prf-nu})-(\ref{eq-prf-tp}) then hold by these definitions. 
We now verify that $\mu^{\ell, t+1}_k$ and  $p^{\ell,t+1}_k$  satisfy the requirements in properties (b) and (d) for $t+1$. 

First, we show that $p^{\ell,t+1}_{k}$ satisfies the requirement in property (b):
$$ p^{\ell,t+1}_k(\cdot \mid \bar i, \bar u) \in \A_\delta \big(\mbf{p}^{\bar i \bar u}_{\bplb} \big), \qquad \forall \, (\bar i, \bar u) \in \Stce^1.$$
This holds by definition if $P^k_3(X = \bar i, Y = \bar u) = 0$, so we consider the case $P^k_3(X = \bar i, Y = \bar u) > 0$. 
By the induction hypothesis and by the definitions we made for the case $s=0$, $\mbf{P}^{\ell}_t$ and $\mbf{P}^{\ell_{\tilde v}}_{\tau_{\tilde v}}$, $\tilde v \in V(s)$, all have the product form given in property (b). 
Using the definition of $P^k_1$ and $P^{k, \tilde v}_2$, we then have that for all $\bar j \in \Ste$,
\begin{align*}
 P^k_1(X = \bar i, Y = \bar u, Z = \bar j)  & =  \mbf{P}^{\ell}_{t}\big( i_k = \bar i , u_k = \bar u 
 \big) \cdot 
 p^{\ell,t}_k(\bar j \mid\bar i, \bar u), \\
 P^{k, \tilde v}_2(X = \bar i, Y = \bar u, Z = \bar j)  & = \mbf{P}^{\ell_{\tilde v}}_{\tau_{\tilde v}}\big( i_{k-1} = \bar i , u_{k-1} = \bar u 
 \big) \cdot 
 p^{\ell_{\tilde v},\tau_{\tilde v}}_{k-1}(\bar j \mid\bar i, \bar u), \qquad  \tilde v \in V(s).
\end{align*}
This implies that for every $(\bar i, \bar u) \in \Stce^1$ and every $\tilde v \in V(s)$,
\begin{equation} \label{eq-prf-trans}
 P^k_1( Z = \cdot \mid X = \bar i, Y = \bar u) =  p^{\ell,t}_k(\cdot \mid \bar i, \bar u), \quad P^{k, \tilde v}_2( Z = \cdot \mid X = \bar i, Y = \bar u)  =  p^{\ell_{\tilde v},\tau_{\tilde v}}_{k-1}(\cdot \mid \bar i, \bar u).
\end{equation} 
Then, since $P^k_3 = ( 1 - \gamma) P^k_1 + \gamma \sum_{\tilde v \in V(s)} \bplb_s(\tilde v) P^{k, \tilde v}_2$,  using Eqs.~(\ref{eq-prf-deftrans2}), (\ref{eq-prf-trans}) and the relation $P^k_3( Z  = \cdot \mid X = \bar i , Y = \bar u)  = P^k_3( X = \bar i, Y = \bar u, Z = \cdot) / P^k_3(X = \bar i, Y = \bar u)$, we obtain
\begin{align}
  p^{\ell,t+1}_k(\cdot \mid \bar i, \bar u)   & = 
 \frac{ ( 1 - \gamma) P^k_1(X = \bar i, Y = \bar u)}{P^k_3(X = \bar i, Y = \bar u)}     \cdot p^{\ell,t}_k(\cdot \mid \bar i, \bar u)  \notag \\
 & \quad \ +  \sum_{\tilde v \in V(s)} \frac{ \gamma \, \bplb_s(\tilde v) \, P^{k, \tilde v}_2(X = \bar i, Y = \bar u)}{P^k_3(X = \bar i, Y = \bar u)}   \cdot p^{\ell_{\tilde v},\tau_{\tilde v}}_{k-1}(\cdot \mid \bar i, \bar u). 
   \label{eq-prf-defp}
\end{align}
This shows that $p^{\ell,t+1}_k(\cdot \mid \bar i, \bar u)$ is a convex combination of $p^{\ell,t}_k(\cdot \mid \bar i, \bar u)$ and $p^{\ell_{\tilde v},\tau_{\tilde v}}_{k-1}(\cdot \mid \bar i, \bar u)$, $\tilde v \in V(s)$.
By property (b) of the induction hypothesis,
\begin{equation} 
   p^{\ell,t}_k(\cdot \mid \bar i, \bar u) \in \A_\delta\big(\mbf{p}^{\bar i  \bar u}_{\bplb}\big), \qquad  p^{\ell_{\tilde v},\tau_{\tilde v}}_{k-1}(\cdot \mid \bar i, \bar u) \in \A_\delta\big(\mbf{p}^{\bar i \bar u}_{\bplb}\big), \quad \forall \, \tilde v \in V(s). \notag
\end{equation}   
Since the set $\A_\delta\big(\mbf{p}^{\bar i  \bar u}_{\bplb}\big)$ is convex,  
it follows that
$p^{\ell,t+1}_k(\cdot \mid \bar i, \bar u)  \in \A_\delta\big(\mbf{p}^{\bar i  \bar u}_{\bplb}\big)$, so it satisfies the requirement in property (b).

Reasoning similarly, 
and using property (d) of the induction hypothesis, 
it follows that for all $(\bar i, \bar u) \in \Stce^1$,
\begin{equation} 
 p^{\ell,t+1}_k(\cdot \mid \bar i, \bar u) =  \mbf{p}_{\bplb}^{\bar i \bar u}, \qquad \forall \, k \geq k_{t+1}. \notag
\end{equation} 
So $p^{\ell,t+1}_{k}$ satisfies the requirement in property (d) for $t+1$. 

We now verify that $\mu^{\ell, t+1}_k$ satisfies the requirements in properties (b) and (d) for $t+1$. 
Similar to the preceding proof, for each $\bar i \in \Ste$, either $\mu^{\ell, t+1}_k(\cdot \mid \bar i) = \mu(\cdot \mid \bar i)$ (when $P^k_3(X = \bar i) = 0$), 
or it
can be expressed as a convex combination of $\mu^{\ell,t}_k(\cdot \mid \bar i)$ and $\mu^{\ell_{\tilde v}, \tau_{\tilde v}}_{k-1}(\cdot \mid \bar i)$, $\tilde v \in V(s)$:
\begin{align*}
   \mu^{\ell,t+1}_k(\cdot \mid \bar i) 
 =  \frac{ ( 1 - \gamma) P^k_1(X = \bar i)}{P^k_3(X = \bar i)}     \cdot \mu^{\ell,t}_k(\cdot \mid \bar i) 
   +  \sum_{\tilde v \in V(s)} \frac{ \gamma  \, \bplb_s(\tilde v) \, P^{k, \tilde v}_2(X = \bar i)}{P^k_3(X = \bar i)}   \cdot \mu^{\ell_{\tilde v}, \tau_{\tilde v}}_{k-1}(\cdot \mid \bar i).
\end{align*}  
It then follows from properties (b) and (d) of the induction hypothesis that
$\supp\big( \mu^{\ell, t+1}_k(\cdot \mid \bar i)\big) \subset U(\bar i)$ for all $\bar i \in \Ste$, and  
$ \mu^{\ell,t+1}_k = \mu$ for $k \geq k_{t+1},$  which are the requirements in properties (b) and (d).

\bigskip
\noindent {\bf For $k=1$: }
The arguments in this case are similar to those for $k \geq 2$.
We start with the same definitions. Let $P^1_1$ denote the law of $(i_1, u_1, i_2)$ under $\mbf{P}^{\ell}_t$,
and for each $\tilde v \in V(s)$, let $P^{1, \tilde v}_2$ denote the law of $(i_0, u_0, i_1)$ under $\mbf{P}^{\ell_{\tilde v}}_{\tau_{\tilde v}}$. 
Let $P^1_3$ denote the convex combination of them, given by Eq.~(\ref{eq-def-pk3}). Define the random variables $X, Y$ and $Z$ on the sample space $\tilde \Omega = \Ste \times \U \times \Ste$ as in the preceding case of $k \geq 2$.
Let $I[\cdots]$ denote the indicator function which takes the value $1$ if the expression inside $[\cdots]$ is true and takes the value $0$ otherwise. 
Since for every $\tilde v \in V(s)$,
$$ P^{1, \tilde v}_2( X = \bar i, Y = \bar u) = \mbf{P}^{\ell_{\tilde v}}_{\tau_{\tilde v}} ( i_0 = \bar i, u_0 = \bar u) =  I \big[ \, \bar i = s, \bar u = \tilde u \, \big], \qquad (\bar i, \bar u) \in \Stce^1,$$
we have
\begin{equation}
  P^1_3 ( X = \bar i, Y = \bar u) = ( 1 - \gamma) \, P^1_1 ( X = \bar i, Y = \bar u) + \gamma \, I \big[ \, \bar i = s, \bar u = \tilde u \, \big], \qquad  (\bar i, \bar u) \in \Stce^1. \label{eq-prf-P13}
\end{equation}  

Notice that $\supp(P^1_3) \subset \Stce^1 \times \Ste$ because $(s, \tilde u) = (j^\ell_t, u^\ell_t) \in \Stce^1$ and $\supp(P^1_1) \subset \Stce^1 \times \Ste$ by property~(b) of the induction hypothesis (in particular, the property of $\mu^{\ell,t}_1$).
Hence we can write the term $C_1$ in Eq.~(\ref{eq-tQexpc1}) as
\begin{equation}
 C_1 =  \sum_{\bar i \in \Ste} \sum_{  \bar u \in U(\bar i)}  \, P^1_3(X = \bar i, Y = \bar u) \,\cdot g^{\ell, t+1}_1(\bar i, \bar u), \label{eq-prf-expC1}
\end{equation}
where $g^{\ell, t+1}_1(\bar i, \bar u)$ for every $(\bar i, \bar u)$ is defined as: 
if $P^1_3(X = \bar i, Y = \bar u) = 0$, then $g^{\ell, t+1}_1(\bar i, \bar u) = g_{\bplb}(\bar i, \bar u)$; otherwise,
\begin{equation} \label{eq-prf-defg1}
 g^{\ell, t+1}_1(\bar i, \bar u) = \frac{( 1 - \gamma ) \, P^1_1(X = \bar i, Y = \bar u)}{P^1_3(X = \bar i, Y = \bar u)} \cdot g^{\ell,t}_1( \bar i, \bar u) +  \frac{ \gamma  \,  I \big[ \, \bar i = s, \bar u = \tilde u \, \big]}{P^1_3(X = \bar i, Y = \bar u)}  \cdot  \sum_{\tilde v \in V(s)}   \bplb_{s}(\tilde v) \, g^{\ell_{\tilde v}, \tau_{\tilde v}}_{0}( s, \tilde u, \tilde v), 
\end{equation}
which, for $(\bar i, \bar u) \not= (s, \tilde u)$, is $g^{\ell, t+1}_1(\bar i, \bar u) = g^{\ell,t}_1( \bar i, \bar u)$.

We verify that $g^{\ell, t+1}_1$ satisfies the requirement in property (c) for $t+1$:
$g^{\ell, t+1}_1 \in  \B_\delta(g_{\bplb}).$ By the definition of  $\B_\delta(g_{\bplb})$, what we need to show is that for each $(\bar i, \bar u) \in \Stce^1$,
\begin{equation} \label{eq-prf-ineqg1}
  \big| g^{\ell, t+1}_1(\bar i, \bar u) -  g_{\bplb}(\bar i, \bar u) \big| \leq \delta,\qquad \text{and} \quad  g^{\ell, t+1}_1(\bar i, \bar u) = 0 \ \ \text{if} \ \bar i = 0.
\end{equation}  
From the definition of $g^{\ell,t+1}_1$ and the fact that $g^{\ell,t}_1 \in \B_\delta(g_{\bplb})$ [property (c) of the induction hypothesis], 
we see that Eq.~(\ref{eq-prf-ineqg1}) is obviously true for all $(\bar i, \bar u) \not=(s, \tilde u)$ and for the case where $g^{\ell, t+1}_1(\bar i, \bar u) = g_{\bplb}(\bar i, \bar u)$.
This leaves us only one case to consider: $(\bar i, \bar u) = (s, \tilde u)$ and  $g^{\ell,t+1}_1(s, \tilde u)$ is given by Eq.~(\ref{eq-prf-defg1}).

By Eq.~(\ref{eq-prf-defg1}), $g^{\ell, t+1}_1(s, \tilde u)$ is a convex combination of $g^{\ell,t}_1( s, \tilde u)$ and  $\sum_{\tilde v \in V(s)}   \bplb_{s}(\tilde v) \, g^{\ell_{\tilde v}, \tau_{\tilde v}}_{0}( s, \tilde u, \tilde v)$.
If $s=0$, then the latter two terms both equal $0$ by the induction hypothesis and by our definition of $g^{\ell_{\tilde v}, \tau_{\tilde v}}_{0}$ for $s = 0$, and consequently $g^{\ell, t+1}_1(0, 0) = 0$ as desired. Consider now the case $s \not= 0$. 
By property (c) of the induction hypothesis,
$$  g^{\ell_{\tilde v},\tau_{\tilde v}}_0 \in \B_\delta(g), \quad \forall \, \tilde v \in V(s).$$
Since $g_{\bplb}(s, \tilde u) = \sum_{\tilde v \in V(s)}   \bplb_{s}(\tilde v) g(s, \tilde u, \tilde v)$ [cf.\ Eq.~(\ref{eq-sspa-transcost}) in Definition~\ref{def-sspa} for \sspa],  
this implies that
$$ \Big| g_{\bplb}(s, \tilde u) - \sum_{\tilde v \in V(s)}   \bplb_{s}(\tilde v) \, g^{\ell_{\tilde v}, \tau_{\tilde v}}_{0}( s, \tilde u, \tilde v)  \Big| \leq \delta, \qquad \text{if} \ s \not= 0.$$
Combining the preceding relations with the induction hypothesis that $g^{\ell,t}_1 \in \B_\delta(g_{\bplb})$, we have
$$  \big| g^{\ell, t+1}_1(s, \tilde u) -  g_{\bplb}(s, \tilde u) \big| \leq \delta,\qquad \text{and} \quad  g^{\ell, t+1}_1(s, \tilde u) = 0 \ \ \text{if} \ s = 0,$$
which is Eq.~(\ref{eq-prf-ineqg1}) for $(\bar i, \bar u) = (s, \tilde u)$. 
This proves that 
$g^{\ell, t+1}_1 \in  \B_\delta(g_{\bplb})$, which is the requirement in property (c).

We define the transition probability distributions $\mu^{\ell, t+1}_1$, $p^{\ell,t+1}_1$ by Eqs.~(\ref{eq-prf-deftrans1}), (\ref{eq-prf-deftrans2}), respectively, for $k=1$, so that Eqs.~(\ref{eq-prf-nu})-(\ref{eq-prf-tp}) hold for $k=1$ as well. Evidently $\mu^{\ell, t+1}_1$ satisfies the requirement in property (b) for $t+1$, because $\supp(P^1_3) \subset \Stce^1 \times \Ste$ as discussed earlier. We now verify that $p^{\ell,t+1}_1$ satisfies the requirement in property (b) for $t+1$, namely,
\begin{equation} \label{eq-prf-p1-propb}
  p^{\ell,t+1}_1(\cdot \mid \bar i, \bar u) \in \A_\delta \big(\mbf{p}^{\bar i \bar u}_{\bplb} \big), \qquad \forall \, (\bar i, \bar u) \in \Stce^1.
\end{equation}  
Similar to the analysis given earlier for the case $k \geq 0$, we have that for every $(\bar i, \bar u) \in \Stce^1$,  
either $P^1_3(X = \bar i, Y = \bar u) = 0$ and $p^{\ell,t+1}_1(\cdot \mid \bar i, \bar u) = \mbf{p}^{\bar i \bar u}_{\bplb}$ by definition, or $P^1_3(X = \bar i, Y = \bar u) > 0$
and $p^{\ell,t+1}_1(\cdot \mid \bar i, \bar u)$ can be expressed as the convex combination
\begin{align}
 p^{\ell,t+1}_1(\cdot \mid \bar i, \bar u) & = \frac{( 1 - \gamma ) \, P^1_1(X = \bar i, Y = \bar u)}{P^1_3(X = \bar i, Y = \bar u)} \cdot p^{\ell,t}_1( \cdot \mid \bar i, \bar u) \notag \\
   & \quad \ + 
  \frac{ \gamma  \,  I \big[ \, \bar i = s, \bar u = \tilde u \, \big]}{P^1_3(X = \bar i, Y = \bar u)}  \cdot  \sum_{\tilde v \in V(s)}   \bplb_{s}(\tilde v) \, p^{\ell_{\tilde v}, \tau_{\tilde v}}_0(\cdot \mid s, \tilde u, \tilde v). \label{eq-prf-exp-p1}
\end{align}
For $(\bar i,\bar u) \not=(s, \tilde u)$, Eq.~(\ref{eq-prf-exp-p1}) is $ p^{\ell,t+1}_1(\cdot \mid \bar i, \bar u) =  p^{\ell,t}_1( \cdot \mid \bar i, \bar u)$; since  $p^{\ell,t}_1( \cdot \mid \bar i, \bar u) \in \A_\delta \big(\mbf{p}^{\bar i \bar u}_{\bplb} \big)$ by property (b) of the induction hypothesis, to prove Eq.~(\ref{eq-prf-p1-propb}), we only have one case left to consider: $(\bar i,\bar u) = (s, \tilde u)$ and $p^{\ell,t+1}_1(\cdot \mid s, \tilde u)$ is given by Eq.~(\ref{eq-prf-exp-p1}). 
Now if $s = 0$, then $(\bar i,\bar u) = (s, \tilde u) = (0,0)$ and we have $p^{\ell,t+1}_1(0 \mid 0, 0) = 1$ as desired, because $p^{\ell,t}_1( 0 \mid 0, 0)= 1$ by the induction hypothesis and $p^{\ell_{\tilde v}, \tau_{\tilde v}}_0(0 \mid 0, 0, 0) = 1$ by our definition of $p^{\ell_{\tilde v}, \tau_{\tilde v}}_0$ for $s = 0$. So consider the case $(\bar i,\bar u) = (s, \tilde u) \not= (0,0)$.
By property (b) of the induction hypothesis, 
\begin{equation}
p^{\ell_{\tilde v}, \tau_{\tilde v}}_0(\cdot \mid s, \tilde u, \tilde v) \in \A_\delta \big(\mbf{p}^{\ell_{\tilde v}}_o \big), \qquad \tilde v \in V(s). \label{eq-prf-exp-p1a}
\end{equation}
In view of Eq.~(\ref{eq-sspa-transprob}) in the definition of \sspa\ [Definition~\ref{def-sspa}],  
$\mbf{p}^{s\tilde u}_{\bplb} =   \sum_{\tilde v \in V(s)}   \bplb_s(\tilde v) \,  \mbf{p}^{\ell_{\tilde v}}_o$, and therefore,
the relation~(\ref{eq-prf-exp-p1a}) implies that
$$ \sum_{\tilde v \in V(s)} \bplb_s(\tilde v) \, p^{\ell_{\tilde v}, \tau_{\tilde v}}_0(\cdot \mid s, \tilde u, \tilde v) \, \in \A_\delta \big(\mbf{p}^{s\tilde u}_{\bplb} \big).$$
Using this fact and the induction hypothesis that $p^{\ell,t}_1( \cdot \mid s, \tilde u) \in \A_\delta \big(\mbf{p}^{s \tilde u}_{\bplb} \big)$, we obtain from the convex combination formula (\ref{eq-prf-exp-p1}) that
$p^{\ell,t+1}_1(\cdot \mid s, \tilde u) \in \A_\delta \big(\mbf{p}^{s \tilde u}_{\bplb} \big)$.
This proves Eq.~(\ref{eq-prf-p1-propb}) and shows that $p^{\ell,t+1}_1$ satisfies the requirement in property (b) for $t+1$.

\bigskip
\noindent {\bf Define the Markov chain for $t+1$:}
\smallskip

We now define the time-inhomogeneous Markov chain $(i_0, u_0, v_0), (i_1, u_1), (i_2, u_2), \ldots$ with probability distribution $\mbf{P}^{\ell}_{t+1}$, as required in property (a) for $t+1$.
Let the chain start with 
$(i_0, u_0, v_0) = (i, u, v)$, and let its transition probabilities have the product forms given in property (b) for $t+1$, where 
$p^{\ell,t+1}_k, k \geq 0,$ and $\mu^{\ell,t+1}_k, k \geq 1,$ are the functions that we defined in the preceding proof.
Also let the time-varying one-stage cost functions $g^{\ell,t+1}_k, k \geq 0,$ be as defined earlier.
We have shown that these transition probabilities and one-stage cost functions satisfy the requirements in properties (b)-(d). 
To prove the lemma, what we still need to show is that with our definitions,  the expression given in property (a) equals $\tilde Q_{t+1}(i,u,v)$.

First of all, our definitions of the transition probabilities and one-stage cost functions for time $t+1$ ensure that $\{ (i_k, u_k), k \geq k_{t+1} \}$ evolves and incurs costs as in \sspa\ under the proper policy $\mu$ [property (d)]. Consequently, 
$\E^{ \mbf{P}^{\ell}_{t+1}} \Big[ \sum_{k=1}^\infty  g^{\ell, t+1}_k(i_k, u_k) \Big] $
is well-defined and finite, and the order of summation and expectation can be exchanged:
\begin{equation}  
  \E^{ \mbf{P}^{\ell}_{t+1}} \Big[ \, \sum_{k=1}^\infty  g^{\ell, t+1}_k(i_k, u_k)  \, 
  \Big] =
  \sum_{k=1}^\infty \E^{ \mbf{P}^{\ell}_{t+1}} \Big[ \,  g^{\ell, t+1}_k(i_k, u_k)  \, 
  \Big]. \notag
\end{equation}  
Now $ \tilde Q_{t+1}(i,u,v) = \sum_{k = 0}^\infty C_k$ by Eq.~(\ref{eq-prf-tQ}). Hence, to prove property (a) for $t+1$, that is, to show 
$$ \tilde Q_{t+1}(i,u,v) = g^{\ell,t+1}_0(i, u, v)  + \sum_{k=1}^\infty  \E^{ \mbf{P}^{\ell}_{t+1}} \Big[ \,  g^{\ell,t+1}_k(i_k, u_k) \, \Big],$$
we only need to show that
\begin{equation} \label{eq-prf-ck}
   C_0 =  g^{\ell,t+1}_0(i, u, v), \qquad C_k =  \E^{ \mbf{P}^{\ell}_{t+1}} \Big[ \,  g^{\ell,t+1}_k(i_k, u_k) \, \Big], \quad k \geq 1.
\end{equation}   
The equality for $C_0$ above is true since by definition $g^{\ell, t+1}_0(i, u, v) = \G_{t+1}(i,u,v) = C_0$ [cf.\ Eq.~(\ref{eq-prftQ-1})].
We now prove the second equality in Eq.~(\ref{eq-prf-ck}) for $C_k, k \geq 1$.

For $k \geq 1$, recall
$$C_k = \sum_{\bar i \in \Ste} \sum_{  \bar u \in U(\bar i)}  \, P^k_3(X = \bar i, Y = \bar u) \,\cdot g^{\ell, t+1}_k(\bar i, \bar u)$$ 
[cf.\ Eqs.~(\ref{eq-prf-exptQk1}),~(\ref{eq-prf-expC1})]. Hence, to show the desired equality (\ref{eq-prf-ck}) for $C_k$, it is sufficient to show that
\begin{equation} \label{eq-prf-final-marg1}
  \mbf{P}^{\ell}_{t+1}( i_k = \bar i, u_k = \bar u) = P^k_3(X = \bar i, Y = \bar u),  \qquad \forall \, (\bar i, \bar u) \in \Stce^1.
\end{equation}  
By the definition of $\mbf{P}^{\ell}_{t+1}$ [which is defined by property (b), as we recall], $\mbf{P}^{\ell}_{t+1}( u_k = \bar u \mid i_k = \bar i) = \mu_k^{\ell,t+1}(\bar u \mid \bar i)$ for all $(\bar i, \bar u) \in \Stce^1$, 
so in view of Eq.~(\ref{eq-prf-nu}) (which is the defining relation for $\mu_k^{\ell,t+1}$),  
the equality (\ref{eq-prf-final-marg1}) will be implied if we show 
\begin{equation} \label{eq-prf-final-marg2}
   \mbf{P}^{\ell}_{t+1}( i_k = \bar i) = P^k_3(X = \bar i), \qquad \forall \, \bar i \in \Ste.
\end{equation}   

We verify Eq.~(\ref{eq-prf-final-marg2}) by induction on $k$. For $k = 1$, from Eq.~(\ref{eq-prf-P13}) and property (b) of $\mbf{P}^{\ell}_t$, we have that for every $\bar i \in \Ste$,
\begin{align*}
  P^1_3(X = \bar i) & =  ( 1 - \gamma ) \, \mbf{P}^{\ell}_t \big(i_1 = \bar i  
   \big) + \gamma \,  \mbf{e}_s(\bar i)  \\
   & = ( 1 - \gamma ) \, p^{\ell, t}_0(\bar i \mid i, u, v) + \gamma \, \mbf{e}_s(\bar i) \\
   & = ( 1 - \gamma ) \, \mbf{q}^{\ell}_t(\bar i) + \gamma \, \mbf{e}_{j^{\ell}_t}(\bar i)   \\
   &  = \mbf{q}^{\ell}_{t+1}(\bar i) 
   = p^{\ell,t+1}_0(\bar i \mid i, u, v) = \mbf{P}^{\ell}_{t+1}( i_1 = \bar i ),
\end{align*}   
where the last three equalities follow from the definition of  $\mbf{q}^{\ell}_{t+1}$ [cf.\ Eq.~(\ref{eq-definitp})], the definition of $p^{\ell,t+1}_0$ [Eq.~(\ref{eq-prf-def-gp0})],  and the definition of $\mbf{P}^{\ell}_{t+1}$, respectively.
Hence Eq.~(\ref{eq-prf-final-marg2}) holds for $k = 1$.

Suppose Eq.~(\ref{eq-prf-final-marg2}) holds for some $k \geq 1$. Then, by the definition of $\mbf{P}^{\ell}_{t+1}$ [i.e., the property (b)],  we have for all $\bar j \in \Ste$,
\begin{align*}
   \mbf{P}^{\ell}_{t+1}( i_{k+1} = \bar j) & =  \sum_{\bar i \in \Ste} \sum_{\bar u \in U(\bar i)} \mbf{P}^{\ell}_{t+1}( i_{k} = \bar i) \cdot \mu^{\ell,t+1}_k(\bar u \mid \bar i ) \cdot p^{\ell,t+1}_k(\bar j \mid \bar i, \bar u) \\
   & = \sum_{\bar i \in \Ste} \sum_{\bar u \in U(\bar i)} P^k_3( X = \bar i) \cdot \mu^{\ell,t+1}_k(\bar u \mid \bar i ) \cdot p^{\ell,t+1}_k(\bar j \mid \bar i, \bar u) \\
 & = P^k_3(Z = \bar j)  
 = P^{k+1}_3(X = \bar j),
 \end{align*}
where the second equality follows from the induction hypothesis, the third equality follows from Eqs.~(\ref{eq-prf-nu})-(\ref{eq-prf-tp}), and the last equality follows from the definition of $P^k_3$ and $P^{k+1}_3$.
This completes the induction and proves that Eq.~(\ref{eq-prf-final-marg2}) holds for all $k \geq 1$, which in turn proves that Eq.~(\ref{eq-prf-final-marg1}) holds for all $k \geq 1$.
Consequently, for all $k \geq 1$, the desired equality (\ref{eq-prf-ck}) for $C_k$ holds.
This completes the proof of Lemma~\ref{lma-tQ}.

\end{document}